\pgfplotsset{compat=1.11} 
\def\NUMA{%
    NUMA research group, Department of Computer Science, KU Leuven, Leuven, Belgium (\email{\{arne.bouillon,giovanni.samaey,karl.meerbergen\}@kuleuven.be}).
}
\def\submitted{{%
    Submitted to the editors DATE.
    \funding{This work has received funding from the European High-Performance Computing Joint Undertaking (JU) under grant agreement No.\ 955701. The JU receives support from the European Union's Horizon 2020 research and innovation programme and from Belgium, France, Germany, and Switzerland. Karl Meerbergen's work is partly supported by the Research Foundation Flanders (FWO) grants G0B7818N and G088622N, and by the KU Leuven Research Council.}
}}
\renewcommand*\env@matrix[1][*\c@MaxMatrixCols c]{%
  \hskip -\arraycolsep
  \let\@ifnextchar\new@ifnextchar
  \array{#1}}
\newenvironment{smallarray}[1]
 {\null\,\vcenter\bgroup\scriptsize
  \arraycolsep=.13885em
  \hbox\bgroup$\array{@{}#1@{}}}
 {\endarray$\egroup\egroup\,\null}
\newcommand\kronm{}
\newcommand\kronv\mv
\newcommand\ad\lambda
\newcommand\Ad\Lambda
\newcommand\dfm{{\mathbb F}}
\newcommand\invisibleminus{\phantom{-}}
\newcommand\kron\otimes
\newcommand\en{{\mathrm{end}}}
\newcommand\trsp{{\mathstrut\scriptscriptstyle{\top}}}
\newcommand\iu{{\mathrm{i}\mkern1mu}}
\newcommand\E{{\mathrm e}}
\def\mper{{.}}
\def\mcom{{,}}
\def\sidx{m}
\def\Sidx{M}
\def\tidx{l}
\def\Tidx{L}
\def\iidx{k}
\newcommand\ukus[2]{{#2}}
\def\sz{\ukus{s}{z}}
\def\ou{\ukus{ou}{o}}
\newcommand*\mv[1]{{\bm{{#1}}}}
\DeclareMathOperator\diag{diag}
\definecolor{matlabred}{rgb}{0.6350,0.0780,0.1840}
\definecolor{matlabgreen}{rgb}{0.4660,0.6740,0.1880}
\definecolor{matlabblue}{rgb}{0,0.4470,0.7410}
\algnewcommand{\LineComment}[1]{\Statex \(\triangleright\) #1}
\algnewcommand{\LineCommentWithSkip}[1]{\Statex \hskip 1cm \(\triangleright\) #1}
\LetLtxMacro\orgvdots\vdots
\LetLtxMacro\orgddots\ddots
\DeclareRobustCommand\vdots{%
  \mathpalette\@vdots{}%
}
\newcommand*{\@vdots}[2]{%
  \sbox0{$#1\cdotp\cdotp\cdotp\m@th$}%
  \sbox2{$#1.\m@th$}%
  \vbox{%
    \dimen@=\wd0 %
    \advance\dimen@ -3\ht2 %
    \kern.5\dimen@
    \dimen@=\wd2 %
    \advance\dimen@ -\ht2 %
    \dimen2=\wd0 %
    \advance\dimen2 -\dimen@
    \vbox to \dimen2{%
      \offinterlineskip
      \copy2 \vfill\copy2 \vfill\copy2 %
    }%
  }%
}
\DeclareRobustCommand\ddots{%
  \mathinner{%
    \mathpalette\@ddots{}%
    \mkern\thinmuskip
  }%
}
\newcommand*{\@ddots}[2]{%
  \sbox0{$#1\cdotp\cdotp\cdotp\m@th$}%
  \sbox2{$#1.\m@th$}%
  \vbox{%
    \dimen@=\wd0 %
    \advance\dimen@ -3\ht2 %
    \kern.5\dimen@
    \dimen@=\wd2 %
    \advance\dimen@ -\ht2 %
    \dimen2=\wd0 %
    \advance\dimen2 -\dimen@
    \vbox to \dimen2{%
      \offinterlineskip
      \hbox{$#1\mathpunct{.}\m@th$}%
      \vfill
      \hbox{$#1\mathpunct{\kern\wd2}\mathpunct{.}\m@th$}%
      \vfill
      \hbox{$#1\mathpunct{\kern\wd2}\mathpunct{\kern\wd2}\mathpunct{.}\m@th$}%
    }%
  }%
}
\title{On generali\ukus szed preconditioners for time-parallel parabolic optimal control\thanks{\submitted}}
\author{{Arne Bouillon\thanks{\NUMA}}\and Giovanni Samaey\footnotemark[2]\and Karl Meerbergen\footnotemark[2]}
\begin{document}

\maketitle

\begin{abstract}
    The ParaDiag family of algorithms solves differential equations by using preconditioners that can be inverted in parallel through diagonali\sz{}ation. In the context of optimal control of linear parabolic \textsc{pde}s, the state-of-the-art ParaDiag method is limited to solving self-adjoint problems with a tracking objective. We propose three improvements to the ParaDiag method: the use of alpha-circulant matrices to construct an alternative preconditioner, a generalization of the algorithm for solving non-self-adjoint equations, and the formulation of an algorithm for terminal-cost objectives. We present novel analytic results about the eigenvalues of the preconditioned systems for all discussed ParaDiag algorithms in the case of self-adjoint equations, which proves the fav\ou{}rable properties of the alpha-circulant preconditioner. We use these results to perform a theoretical parallel-scaling analysis of ParaDiag for self-adjoint problems. Numerical tests confirm our findings and suggest that the self-adjoint behavi\ou{}r, which is backed by theory, generali\sz{}es to the non-self-adjoint case. We provide a sequential, open-source reference solver in \textsc{Matlab} for all discussed algorithms.

\end{abstract}

\begin{keywords}
    Optimal control, ParaDiag algorithm, preconditioning, parallel-in-time

\end{keywords}

\begin{MSCcodes}
    49M05, 
65F08, 
65K10, 
65Y05  

\end{MSCcodes}

\section{Introduction} \label{sec:intro}
    We are interested in optimal-control problems of the form
\begin{equation} \label{eq:intro:intro:optprob}
    \min_{y,u}J(y,u) \quad \text{such that} \quad y_t = g(y) + u, \quad y(0) = y_\mathrm{init}
\end{equation}
over time $[0, T]$. Here, $y$ represents a space- and time-dependent state variable with initial condition $y_\mathrm{init}$ evolving under the influence of a linear operator $g$, while $u$ is a control input with which $y$ is steered. We want to choose $u$ to minimi\sz{}e $J$, which is either a \emph{tracking} or a \emph{terminal-cost} objective function
\begin{equation} \label{eq:intro:intro:obj}
    J(y,u) = \begin{cases}
        \text{Tracking:} & \frac12\int_0^T\norm{y(t)-y_\mathrm d(t)}_2^2\dif t + \frac\gamma2\int_0^T\norm{u(t)}_2^2\dif t    \mcom\\
        \text{Terminal cost:} & \frac12\norm{y(T)-y_\mathrm{target}}_2^2 + \frac\gamma2\int_0^T\norm{u(t)}_2^2\dif t   \mper
    \end{cases}
\end{equation}
Tracking objectives aim to keep $y$ as close as possible to a trajectory $y_\mathrm d$, while terminal cost only requires the final position to be close to some $y_\mathrm{target}$. The factor $\gamma>0$ regulari\sz{}es the control term and may also model the practical cost of control. We space-discreti\sz{}e this problem (using bold-faced vectors and writing $\mv g(\mv y) = -K\mv y$ for some $K\in\mathbb C^{\Sidx\times\Sidx}$). A solution to \cref{eq:intro:intro:optprob} satisfies the boundary value problem (\textsc{bvp})
\begin{subequations} \label{eq:intro:intro:optsys}
\begin{align}
    &\mv y'(t) = -K\mv y(t) - {\mv\ad(t)}/\gamma, \quad \mv y(0) = \mv{y_\mathrm{init}}    \mcom\\
    &\begin{cases}
        \text{Tracking:} & \mv\ad'(t) = K^*\mv\ad(t) + \mv{y_\mathrm d}(t) - \mv y(t), \quad \mv\ad(T)=\mv0    \mcom\\
        \text{Terminal cost:} & \mv\ad'(t) = K^*\mv\ad(t), \quad \mv\ad(T) = \mv y(T)-\mv{y_\mathrm{target}}    \mcom
    \end{cases}
\end{align}
\end{subequations}
with coupled equations in the state $\mv y(t)$ and the \emph{adjoint} state $\mv\ad(t)\coloneqq-\gamma\mv u(t)$, with one initial and one terminal condition \cite{ganderPARAOPTPararealAlgorithm2020a,wuDiagonalizationbasedParallelintimeAlgorithms2020b,hinzeOptimizationPDEConstraints2009b}. To solve \cref{eq:intro:intro:optsys}, we consider \emph{parallel-in-time} methods for optimal control. These are inspired by time-parallel initial-value problem (\textsc{ivp}) solvers, which overcome the inherently serial nature of time integration. Leveraging these techniques enables the construction of algorithms for optimal control which scale well in parallel when increasing the amount of work in the time dimension.

Some parallel-in-time approaches for the optimal-control problem \cref{eq:intro:intro:optprob} use the \emph{direct-adjoint} optimi\sz{}ation loop \cite{gotschelEfficientParallelinTimeMethod2019a,skeneParallelintimeApproachAccelerating2021a}, where all embedded \textsc{ivp} solves are tackled using time-parallel methods such as \textsc{pfasst} \cite{emmettEfficientParallelTime2012} or the well-known Parareal algorithm \cite{lionsResolutionEDPPar2001a}. Others use the system \cref{eq:intro:intro:optsys}; an example is ParaOpt \cite{ganderPARAOPTPararealAlgorithm2020a}, inspired by Parareal. In this paper, we will expand on the time-parallel algorithm proposed for self-adjoint tracking problems in \cite{wuDiagonalizationbasedParallelintimeAlgorithms2020b}. Belonging to the ParaDiag family \cite{mcdonaldPreconditioningIterativeSolution2018a,ganderConvergenceAnalysisPeriodiclike2019a,liuFastBlockAcirculant2020a,ganderParaDiagParallelintimeAlgorithms2021a,wuParallelInTimeBlockCirculantPreconditioner2020a}, this algorithm constructs a discreti\sz{}ed \emph{all-at-once} system of \cref{eq:intro:intro:optsys} and solves it iteratively, using a preconditioner that is invertible in parallel.

This paper is organi\sz{}ed as follows. We start from the method in \cite{wuDiagonalizationbasedParallelintimeAlgorithms2020b}, whose current preconditioner $P$ is limited as we will see; generali\sz{}ations to new situations are found in \cref{sec:pd-track,sec:pd-tc}. \Cref{sec:pd-track} examines the tracking case, containing an updated \emph{alpha-circulant} preconditioner $P(\alpha)$, analytic expressions for the preconditioned eigenvalues of ParaDiag and an extension of the method to non-self-adjoint problems. \Cref{sec:pd-tc} introduces a novel ParaDiag method for terminal-cost objective functions, again featuring an analytic eigenvalue analysis. In \cref{sec:scale}, we use these results as a theoretical basis to predict weak scalability of both ParaDiag methods for self-adjoint, dissipative equations. The numerical results in \cref{sec:num} confirm this scalability for both self-adjoint and non-self-adjoint equations. In \cref{sec:concl}, we conclude and propose further research directions.

As a final note, we mention the very recent paper \cite{Lin_2022}, which also constructs alpha-circulant preconditioners for self-adjoint tracking problems. Our approach is very different, offering a more direct generali\sz{}ation of \cite{wuDiagonalizationbasedParallelintimeAlgorithms2020b}. The analysis presented here results in exact analytical eigenvalues (both for our method and for \cite{wuDiagonalizationbasedParallelintimeAlgorithms2020b}) instead of a bound and our method straightforwardly generali\sz{}es to non-self-adjoint equations.


\section{ParaDiag for tracking objectives} \label{sec:pd-track}
    This section considers the tracking objective in \cref{eq:intro:intro:obj}, for which a ParaDiag procedure (limited to problems with self-adjoint $K=K^*$) is described in \cite{wuDiagonalizationbasedParallelintimeAlgorithms2020b}. We review this method in \cref{sec:pd-track:existing}. Subsequently, \cref{sec:pd-track:alpha} looks at the limiting case $T\rightarrow0$, in which ParaDiag is discovered to lack robustness. We counteract this with an improvement to the preconditioner, using novel analytic results in \cref{sec:pd-track:anal,sec:pd-track:interp} to prove its more fav\ou{}rable properties. \Cref{sec:pd-track:gen} concludes by proposing a generali\sz{}ation to problems where $K\ne K^*$.

For ease of exposition, we use an implicit-Euler time discreti\sz{}ation with time step $\tau$ throughout this paper, although other discreti\sz{}ations can be treated similarly.

    \subsection{Existing method} \label{sec:pd-track:existing}
    The existing algorithm requires a self-adjoint $K$ -- that is, $K = K^*$. The all-at-once system for \cref{eq:intro:intro:optsys} then reads \cite{wuDiagonalizationbasedParallelintimeAlgorithms2020b}
\begin{equation} \label{eq:pd-track:existing:aao}
    \kronm A\begin{litmat}
        \kronv y\\ \kronv\ad\\
    \end{litmat} \coloneqq \left(\begin{litmat}
        B & \frac{\tau I_t}\gamma\\
        -\tau I_t & B^\trsp\\
    \end{litmat} \kron I_x + \tau\begin{litmat}
        I_t\\&I_t\\
    \end{litmat} \kron K \right)\begin{litmat}
        \kronv y\\ \kronv\ad\\
    \end{litmat} = \begin{litmat}
        \kronv{b_1}\\ \kronv{b_2}\\
    \end{litmat}    \mcom
\end{equation}
where $I_t$ and $I_x$ are identity matrices in the context of time and space and
\begin{equation} \label{eq:pd-track:existing:Bbb}
    B = \left[\begin{smallmatrix}
        1 &\\-1 & 1 &\\
        & \ddots & \ddots\\
        && -1 & 1
    \end{smallmatrix}\right], \quad \kronv{b_1} = \begin{litmat}\mv{y_\mathrm{init}}^\trsp & 0 & \ldots & 0\end{litmat}^\trsp, \quad \text{and} \quad \kronv{b_2} = -\tau\kronv{y_\mathrm d}    \mper
\end{equation}

In \cref{eq:pd-track:existing:aao}, we grouped the discreti\sz{}ed unknowns $\kronv y = \bigl[\begin{smallmatrix}
    \mv y_1^\trsp & \mv y_2^\trsp & \cdots & \mv y_{\Tidx-1}^\trsp
\end{smallmatrix}\bigr]^\trsp$ and $\kronv \ad = \bigl[\begin{smallmatrix}
    \mv \ad_1^\trsp & \mv \ad_2^\trsp & \cdots & \mv \ad_{\Tidx-1}^\trsp
\end{smallmatrix}\bigr]^\trsp$, where $\mv y_\tidx$ and $\mv\ad_\tidx$ are approximations to $\mv y(t=\tidx\tau)$ and $\mv\ad(t=\tidx\tau)$. The relation of $\kronv{y_\mathrm d}$ to $\mv{y_\mathrm d}(t)$ is analogous. The discreti\sz{}ations $\mv y_0$ and $\mv\ad_\Tidx$ are known from \cref{eq:intro:intro:optsys}, while $\mv y_\Tidx = (I_x+\tau K)^{-1}(\mv y_{\Tidx-1} - \frac{\tau\mv\ad_\Tidx}\gamma)$ and $\mv\ad_0 = (I_x+\tau K)^{-1}(\mv\ad_1+\tau(\mv y_0-\mv y_{\mv{\mathrm d}, 0}))$ each only appear in one equation. We introduce $\widehat\Tidx\coloneqq\Tidx-1$ such that $B$ is $\widehat\Tidx\times\widehat\Tidx$.

Next, a rescaling is applied\footnote{In \cite{wuDiagonalizationbasedParallelintimeAlgorithms2020b}, $\kronv y$ and $\kronv{b_1}$ are rescaled, but this is completely equivalent.}: $\kronv{\widehat\ad}\coloneqq\kronv\ad/\sqrt\gamma$ and $\kronv{\widehat b_2} \coloneqq \kronv{b_2}/\sqrt\gamma$. We iteratively solve
\begin{equation} \label{eq:pd-track:existing:aao-rescaled}
    \kronm{\widehat A}\begin{litmat}
        \kronv y\\ \kronv{\widehat\ad}\\
    \end{litmat} \coloneqq \Bigg(\begin{litmat}
        B & \frac{\tau I_t}{\sqrt\gamma}\\
        -\frac{\tau I_t}{\sqrt\gamma} & B^\trsp\\
    \end{litmat} \kron I_x + \tau\begin{litmat}
        I_t\\&I_t\\
    \end{litmat} \kron K \Bigg)\begin{litmat}
        \kronv y\\ \kronv{\widehat\ad}\\
    \end{litmat} = \begin{litmat}
        \kronv{b_1}\\ \kronv{\widehat b_2}\\
    \end{litmat}
\end{equation}
for $\kronv y$ and $\kronv{\widehat\ad}$ using e.g.\ \textsc{gmres} \cite{saadGMRESGeneralizedMinimal1986a}, with a preconditioner we will later invert in parallel:
\begin{equation} \label{eq:pd-track:existing:P}
    \kronm P = \begin{litmat}
        C & \frac{\tau I_t}{\sqrt\gamma}\\
        -\frac{\tau I_t}{\sqrt\gamma} & C^\trsp\\
    \end{litmat} \kron I_x + \tau\begin{litmat}
        I_t\\&I_t\\
    \end{litmat} \kron K \quad \text{with} \quad C = \left[\begin{smallmatrix}
        1 & & & -1\\-1 & 1 &\\
        & \ddots & \ddots\\
        && -1 & 1
    \end{smallmatrix}\right]    \mper
\end{equation}
As can be found in \cite{biniNumericalMethodsStructured2005a}, any \emph{circulant} matrix such as $C\in\mathbb R^{\widehat\Tidx\times\widehat\Tidx}$ diagonali\sz{}es as
\begin{equation} \label{eq:pd-track:existing:C-fact}
C=\dfm ^*D\dfm  \quad \text{with} \quad D=\diag(\sqrt{\widehat\Tidx}\dfm \mv c_1), \quad \text{where $\mv c_1$ is $C$'s first column}    \mper
\end{equation}
Here, $\dfm  = \{\E^{2\pi\iu jk/\widehat\Tidx}/\sqrt{\widehat\Tidx}\}_{j,k=0}^{\widehat\Tidx-1}$ is the discrete Fourier matrix. The work \cite{wuDiagonalizationbasedParallelintimeAlgorithms2020b} factori\sz{}es
\begin{equation} \label{eq:pd-track:existing:P-fac}
    \kronm P = \left(
        \begin{litmat}
            \dfm ^*\\&\dfm ^*\\
        \end{litmat} \kron I_x
    \right)\Bigg(
        \begin{litmat}
            D & \frac{\tau I_t}{\sqrt\gamma}\\
            -\frac{\tau I_t}{\sqrt\gamma} & D^*\\
        \end{litmat} \kron I_x + \tau\begin{litmat}
            I_t\\&I_t\\
        \end{litmat} \kron K
    \Bigg)\left(
        \begin{litmat}
            \dfm \\&\dfm \\
        \end{litmat} \kron I_x
    \right)
\end{equation}
and argues for a diagonali\sz{}ation $\bigl[\begin{smallmatrix}D & {\tau I_t}/{\sqrt\gamma}\\
    -\tau I_t/{\sqrt\gamma} & D^*\end{smallmatrix}\bigr] = WHW^{-1}$ with $W=\bigl[\begin{smallmatrix}I_t & S_2\\S_1 & I_t\end{smallmatrix}\bigr]$, where $H$ and $S_{\{1,2\}}$ are diagonal (see \cite{wuDiagonalizationbasedParallelintimeAlgorithms2020b} for details). Defining $V \coloneqq \bigl[\begin{smallmatrix}
    \dfm ^*\\&\dfm ^*\\
\end{smallmatrix}\bigr]W$,
\begin{equation} \label{eq:pd-track:existing:P-fac-further}
    \kronm P^{-1} = (V \kron I_x)(
        H \kron I_x + \tau I_t \kron K
    )^{-1}(V^{-1} \kron I_x)    \mper
\end{equation}
\Cref{alg:pd-track:existing} summari\sz{}es how to solve \cref{eq:pd-track:existing:aao}, using a parallel multiplication by $\kronm P^{-1}$.

\begin{algorithm}[H]
    \caption{ParaDiag for solving the tracking problem \cref{eq:pd-track:existing:aao}, based on \cite{wuDiagonalizationbasedParallelintimeAlgorithms2020b}} \label{alg:pd-track:existing}
    \begin{tabular}{rl}
        \textbf{Input:}  &Vectors $\kronv{b_1}$ and $\kronv{b_2}$ defined by \cref{eq:pd-track:existing:Bbb}\\
                            &\emph{Self-adjoint} matrix $K$ characterising the problem by \cref{eq:intro:intro:optsys}\\
                            &Matrices $H$ and $W$ following from the time discreti\sz{}ation\\
        \textbf{Output:} &The vectors $\kronv y$ and $\kronv\ad=\sqrt\gamma\kronv{\widehat\ad}$ that solve \cref{eq:pd-track:existing:aao}\\
    \end{tabular}
    \begin{algorithmic}[1]
        \State Rescale $\kronv{\widehat b_2} =  \kronv{b_2}/\sqrt\gamma$.
        \State Solve \cref{eq:pd-track:existing:aao-rescaled} for $\kronv y$ and $\kronv{\widehat\ad}$ using an iterative method, with preconditioner $\kronm P$ from \cref{eq:pd-track:existing:P}. When asked to compute $\bigl[\begin{smallmatrix}\kronv x\\ \kronv z\end{smallmatrix}\bigr] = \kronm P^{-1}\bigl[\begin{smallmatrix}\kronv v\\ \kronv w\end{smallmatrix}\bigr]$:
            \State \hskip1cm Calculate $\kronv{r_1} \coloneqq (\dfm  \kron I_x)\kronv{v}$ and $\kronv{s_1} \coloneqq (\dfm  \kron I_x)\kronv{w}$ with the (parallel) \textsc{fft}.
            \State \hskip1cm Calculate $\kronv{q_2}\coloneqq(W^{-1}\kron I_x)\bigl[\begin{smallmatrix}
                \kronv{r_1}\\\kronv{s_1}
            \end{smallmatrix}\bigr]$.
            \State \hskip1cm For $\tidx=\{1, \ldots, 2\widehat\Tidx\}$, solve (in parallel)
            \begin{equation}
                \mv q_{\mv{3},\tidx}\coloneqq (h_{\tidx,\tidx}I_x+\tau K)^{-1}\mv q_{\mv{2},\tidx}
            \end{equation}
            \hskip1cm and partition the variables as $\bigl[\begin{smallmatrix}\kronv r_3\\ \kronv s_3\end{smallmatrix}\bigr] \coloneqq \kronv{q_3}$.
            \State \hskip1cm Calculate $\bigl[\begin{smallmatrix}\kronv r_4\\ \kronv s_4\end{smallmatrix}\bigr]\coloneqq(W\kron I_x)\bigl[\begin{smallmatrix}\kronv r_3\\ \kronv s_3\end{smallmatrix}\bigr]$.
            \State \hskip1cm Calculate $\kronv{x} = (\dfm ^* \kron I_x)\kronv{r_4}$ and $\kronv{z} = (\dfm ^* \kron I_x)\kronv{s_4}$ with the (parallel) \textsc{fft}.
    \end{algorithmic}
\end{algorithm}

    \subsection{The small-\texorpdfstring{$T$}{T} limit and alpha-circulants} \label{sec:pd-track:alpha}
    When using iterative linear-system solvers, convergence speed often depends substantially on the distribution of the eigenvalues of the preconditioned matrix \cite{trefethenNumericalLinearAlgebra1997a} -- in our case, of $\kronm P^{-1}\kronm{\widehat A}$ (while there are exceptions such as \textsc{cgn}, which relies on singular values instead, the rest of this paper will assume the solver behavi\ou{}r is mainly determined by the eigenvalues). Specifically, eigenvalues that are clustered together and lie far enough from $0$ are beneficial. While we stress that convergence is not exclusively determined by eigenvalues (for an extreme example, see \cite{greenbaumAnyNonincreasingConvergence1996}), they play an important role, and making an educated guess about convergence based on them is common \cite{pearsonRegularizationRobustPreconditionersTimeDependent2012}. In particular, \cite{wuDiagonalizationbasedParallelintimeAlgorithms2020b} performed an empirical eigenvalue study for the method in \cref{sec:pd-track:existing} and compared the \textsc{gmres} and BiCGStab \cite{vandervorstBiCGSTABFastSmoothly1992} iterative solvers, showing the former to be faster. To illustrate eigenvalues' importance, we choose \textsc{gmres} and follow \cite{wuDiagonalizationbasedParallelintimeAlgorithms2020b} in considering the discreti\sz{}ed Laplacian on spatial domain $\Omega=[0,1]$ with isolated boundary,
\begin{equation} \label{eq:pd-track:alpha:Kexample}
    K = \frac1{\Delta\!x^2}\left[\rule{0cm}{.8cm}\right.\begin{smallmatrix}
        1 & -1\\
        -1 & 2 & \ddots\\
        & \ddots & \ddots & \ddots\\
        &&\ddots & 2 & -1\\
        &&&-1&1
    \end{smallmatrix}\left.\rule{0cm}{.8cm}\right] \in \mathbb R^{M\times M}    \mcom
\end{equation}
where $\Delta\!x=1/M$. We use $M=16$, $L=128$, and $\gamma=10^{-5}$ as in \cite{wuDiagonalizationbasedParallelintimeAlgorithms2020b} and set $y_\mathrm d(x,t) = y_\mathrm{init}(x) = \exp(-100(x-0.5)^2)$ -- these do not impact the preconditioned eigenvalues, but may still influence the iteration count. We study this in two regimes. \Cref{fig:pd-track:eigs-1:1} uses \cite{wuDiagonalizationbasedParallelintimeAlgorithms2020b}'s time horizon $T=1$. The eigenvalues cluster around unity and the \textsc{gmres}\footnote{We use a relative \textsc{gmres} tolerance of $10^{-6}$, which is \textsc{Matlab}'s default, throughout this paper.} iteration count $k_g$ is low. When reducing the time interval by setting $T=10^{-4}$, however, \cref{fig:pd-track:eigs-0.0001:1} reveals large variations in the eigenvalues and an increased iteration count. \Cref{sec:pd-track:anal} will study this difference analytically.

\begin{figure}
    \centering
    \begin{subfigure}[b]{.4\textwidth}
        \includegraphics[width=\textwidth]{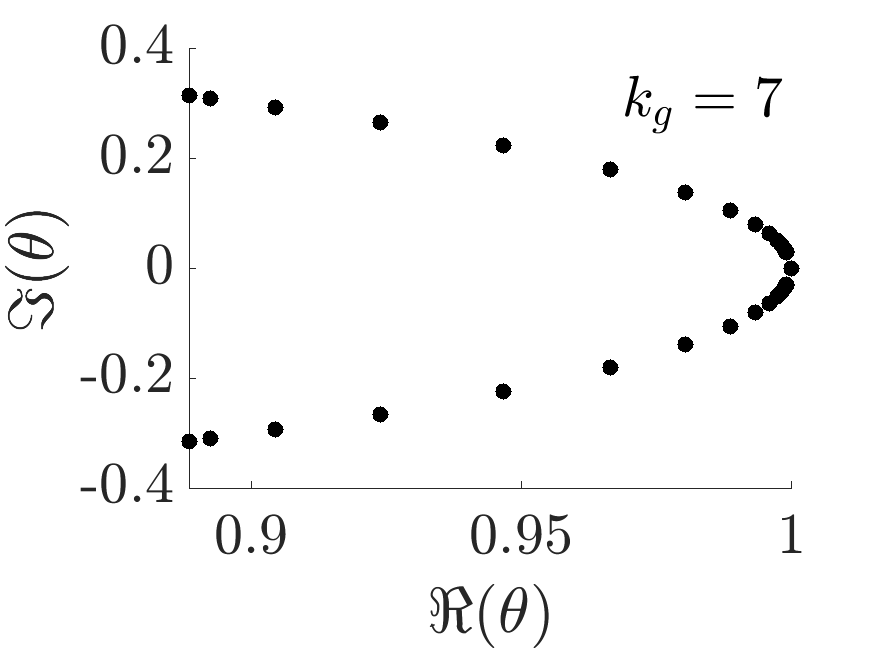}
        \caption{
            $T=1$
            \vspace{-.5cm}
        }
        \label{fig:pd-track:eigs-1:1}
    \end{subfigure}
    \hfill
    \begin{subfigure}[b]{.4\textwidth}
        \includegraphics[width=\textwidth]{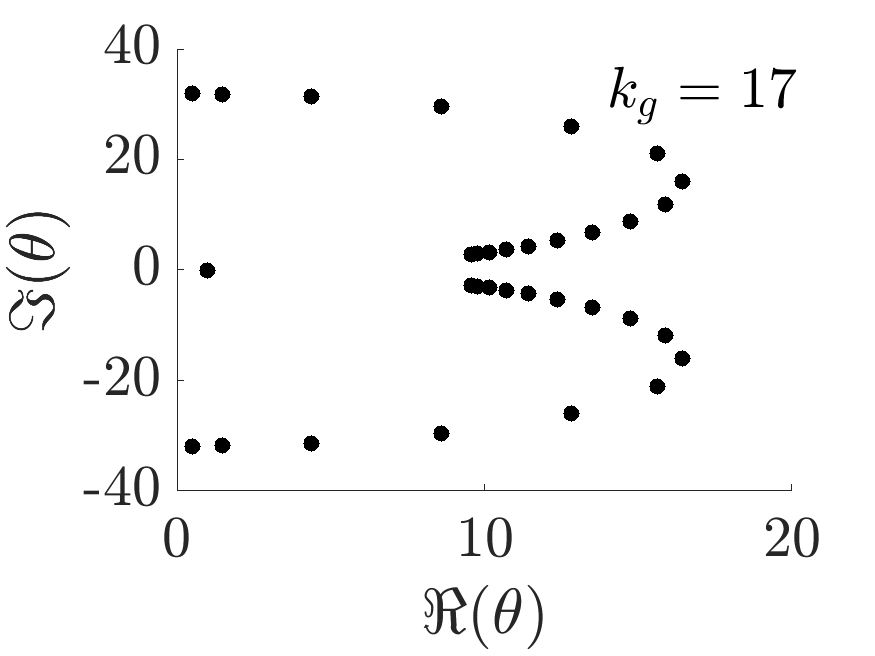}
        \caption{
            $T=10^{-4}$
            \vspace{-.5cm}
        }
        \label{fig:pd-track:eigs-0.0001:1}
    \end{subfigure}
    \caption{
        Eigenvalues $\theta$ of $\kronm P^{-1}\kronm{\widehat A}$ and iteration count $k_g$ of ParaDiag for the example in \cref{sec:pd-track:alpha}, using \textsc{gmres} with relative tolerance $10^{-6}$. \Cref{fig:pd-track:eigs-1:1} mimics \cite[Figure 6]{wuDiagonalizationbasedParallelintimeAlgorithms2020b}, but \cref{fig:pd-track:eigs-0.0001:1} discovers issues when $T$ is small.
        \vspace{-.7cm}
    }
    \label{fig:pd-track:eigs-1}
\end{figure}

We first propose an altered preconditioner $\kronm P(\alpha)$ with a parameter $\alpha\in\mathbb C$. Let
\begin{equation} \label{eq:pd-track:alpha:Palpha}
    \kronm P(\alpha) = \begin{litmat}
        C(\alpha) & \tau\frac{I_t}{\sqrt\gamma}\\
        -\tau\frac{I_t}{\sqrt\gamma} & C(\alpha)^*\\
    \end{litmat} \kron I_x + \tau\begin{litmat}
        I_t\\&I_t\\
    \end{litmat} \kron K \quad \text{with} \quad C(\alpha) = \left[\begin{smallmatrix}
        1 & & & -\alpha\\-1 & 1 &\\
        & \ddots & \ddots\\
        && -1 & 1
    \end{smallmatrix}\right]
\end{equation}
where $C(\alpha)$ is not circulant, but \emph{alpha-circulant} (circulant except that super-diagonal entries have been multiplied by some $\alpha\ne0$). Alpha-circulants diagonali\sz{}e as \cite{biniNumericalMethodsStructured2005a}
\begin{equation} \label{eq:pd-track:alpha:Calpha-fact}
    C(\alpha) = VD(\alpha)V^{-1} \quad \text{with}  \quad V=\Gamma_\alpha^{-1}\dfm ^* \quad \text{and} \quad D(\alpha)=\diag(\sqrt{\widehat\Tidx}\dfm \Gamma_\alpha\mv c_1)    \mcom
\end{equation}
where $\Gamma_\alpha=\diag(1, \alpha^{1/\widehat\Tidx}, \cdots, \alpha^{(\widehat\Tidx-1)/\widehat\Tidx})$. As before, $\mv c_1$ (which is independent of $\alpha$) is $C(\alpha)$'s first column. The idea of using alpha-circulants occurs in the \textsc{ivp} literature \cite{ganderConvergenceAnalysisPeriodiclike2019a,liuFastBlockAcirculant2020a}, but while \textsc{ivp}s can use $\alpha\approx 0$ such that $\kronm P(\alpha) \approx \kronm{\widehat A}$, our $\kronm P(\alpha)$ comes with severe limitations. To successfully perform a factori\sz{}ation like \cref{eq:pd-track:existing:P-fac}, the matrices~$C(\alpha)$ and~$C(\alpha)^*$ must be simultaneously diagonali\sz{}able. This is only the case when
\begin{equation} \label{eq:pd-track:alpha:alphaunitcirc}
    \Gamma_\alpha^{-1} = \Gamma_\alpha^* \Leftrightarrow \abs\alpha=1    \mper
\end{equation}
Contrary to the \textsc{ivp} situation, under the constraint \cref{eq:pd-track:alpha:alphaunitcirc} it is far less clear that setting $\alpha\ne1$ is advantageous. We choose\footnote{$(-1)$-circulants have also been called \emph{skew-circulant} or \emph{negacyclic} matrices \cite{davisCirculantMatrices1979}.} $\alpha=-1$ to reiterate our previous experiment. \Cref{fig:pd-track:eigs--1} shows that $\kronm P(-1)$ does not display the same defects for small $T$ as did $\kronm P = \kronm P(1)$: the case $T=1$ looks identical, but for $T=10^{-4}$ the eigenvalues cluster instead of dispersing and the \textsc{gmres} iteration count remains low. Though not shown here, complex values of $\alpha$ on the unit circle far enough from $1$ yield similar results. In the next subsection, we unravel the behavi\ou{}r for real $\alpha$s analytically. Even if such small $T$s rarely show up in practice, we will see that $\alpha=-1$ allows us to formulate strong analytical results about the preconditioners by covering this edge case.

\begin{figure}
    \centering
    \begin{subfigure}[b]{.45\textwidth}
        \includegraphics[width=\textwidth]{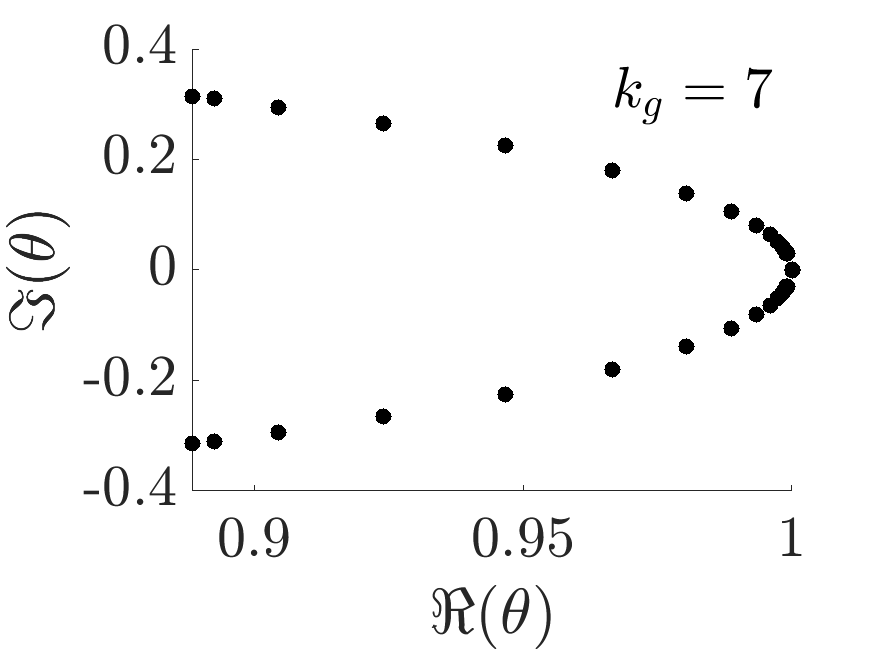}
        \caption{
            $T=1$
        }
        \label{fig:pd-track:eigs-1:-1}
    \end{subfigure}
    \hfill
    \begin{subfigure}[b]{.45\textwidth}
        \includegraphics[width=\textwidth]{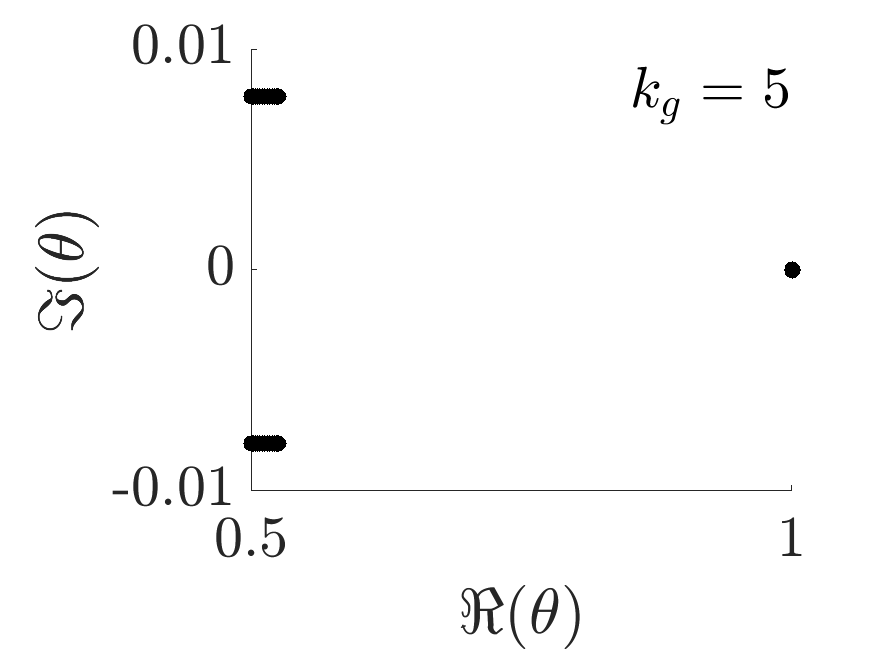}
        \caption{
            $T=10^{-4}$
        }
        \label{fig:pd-track:eigs-0.0001:-1}
    \end{subfigure}
    \caption{
        Eigenvalues $\theta$ of $\kronm P(-1)^{-1}\kronm{\widehat A}$ and iteration count $k_g$ of ParaDiag for the example in \cref{sec:pd-track:alpha}, using \textsc{gmres} with relative tolerance $10^{-6}$
    }
    \label{fig:pd-track:eigs--1}
\end{figure}

    \subsection{Analytic eigenvalue expressions} \label{sec:pd-track:anal}
    We study the preconditioner $\kronm P(\alpha)$ for two significant cases: $\alpha=\pm1$. We start by rescaling the system \cref{eq:pd-track:existing:aao-rescaled} by multiplying by $I_t\kron(I_x + \tau K)^{-1}$. This yields the system
\begin{equation} \label{eq:pd-track:anal:resc}
    \kronm{\widehat A_\mathrm p}\begin{litmat}
        \kronv y\\ \kronv{\widehat\ad}\\
    \end{litmat} \coloneqq \left[\begin{smallarray}{cccc|cccc}
        I_x & & & & \invisibleminus\Psi\\
        -\Phi & I_x & & & & \invisibleminus\Psi\\
        & \ddots & \ddots & & & & \ddots\\
        & & -\Phi & I_x & & & & \invisibleminus\Psi\\
        \cmidrule(lr){1-4}\cmidrule(lr){5-8}
        -\Psi & & & & I_x & -\Phi\\
        & \ddots & & & & \ddots & \ddots\\
        & & -\Psi & & & & I_x & -\Phi\\
        & & & -\Psi & & & & I_x\\
    \end{smallarray}\right]\begin{litmat}
        \kronv y\\ \kronv{\widehat\ad}\\
    \end{litmat} = \kronv{\widehat b_\mathrm p}    \mcom
\end{equation}
with $\kronv{\widehat b_\mathrm p}$ a rescaled version of $\kronv{\widehat b}$. For this implicit-Euler discreti\sz{}ation, $\Phi=(I_x+\tau K)^{-1}$ and $\Psi = \tau/\sqrt\gamma(I_x+\tau K)^{-1}$. We perform the same rescaling on $\kronm P(\alpha)$ and name the result $\kronm P_\mathrm p(\alpha)$. The preconditioned matrix $\kronm P(\alpha)^{-1}\kronm{\widehat A}$ is then equal to $\kronm P_\mathrm p(\alpha)^{-1}\kronm{\widehat A}_\mathrm p$.

We make use of the self-adjointness of $K$ (and, thus, of $\Phi$ and $\Psi$) to drastically reduce the number of parameters that determine the eigenvalues of $\kronm P_\mathrm p(\alpha)^{-1}\kronm{\widehat A}_\mathrm p$. Write $K=V^{-1}\Sigma V$ with diagonal $\Sigma$. $\Phi$ and $\Psi$ are also diagonali\sz{}able by $V$, so $\kronm P_\mathrm p(\alpha)^{-1}\kronm{\widehat A}_\mathrm p$ can be decomposed into \emph{separate} versions for each of $K$'s eigenvalues $\sigma \in \diag\Sigma$. The eigenvalues of $\kronm P_\mathrm p(\alpha)^{-1}\kronm{\widehat A}_\mathrm p$ are the union of those of the matrices
\begin{equation}
    P_{\sigma}(\alpha)^{-1}\widehat A_{\sigma} \coloneqq \left[\begin{smallarray}{cccc|cccc}
        1 & & & -\alpha\varphi & \invisibleminus\psi\\
        -\varphi & 1 & & & & \invisibleminus\psi\\
        & \ddots & \ddots & & & & \ddots\\
        & & -\varphi & 1 & & & & \invisibleminus\psi\\
        \cmidrule(lr){1-4}\cmidrule(lr){5-8}
        -\psi & & & & 1 & -\varphi\\
        & \ddots & & & & \ddots & \ddots\\
        & & -\psi & & & & 1 & -\varphi\\
        & & & -\psi & -\alpha\varphi & & & 1\\
    \end{smallarray}\right]^{-1}\left[\begin{smallarray}{cccc|cccc}
        1 & & & & \invisibleminus\psi\\
        -\varphi & 1 & & & & \invisibleminus\psi\\
        & \ddots & \ddots & & & & \ddots\\
        & & -\varphi & 1 & & & & \invisibleminus\psi\\
        \cmidrule(lr){1-4}\cmidrule(lr){5-8}
        -\psi & & & & 1 & -\varphi\\
        & \ddots & & & & \ddots & \ddots\\
        & & -\psi & & & & 1 & -\varphi\\
        & & & -\psi & & & & 1\\
    \end{smallarray}\right]
\end{equation}
for all $\sigma$, with $\varphi=(1+\tau\sigma)^{-1}$ and $\psi=\frac\tau{\sqrt\gamma}\varphi$. Then, by writing $\widehat\sigma\coloneqq\tau\sigma$ and $\widehat\gamma\coloneqq\frac\tau{\sqrt\gamma}$,
\begin{equation} \label{eq:pd-track:anal:phipsi}
    \varphi = (1 + \widehat\sigma)^{-1} \quad \text{and} \quad \psi = \widehat\gamma(1+\widehat\sigma)^{-1}    \mcom
\end{equation}
which eliminates $\tau$ as an independent variable. The only parameters left that are relevant to the eigenvalues of $P(\alpha)^{-1}\widehat A$ are the different $\widehat\sigma$s (time step--rescaled eigenvalues of $K$), $\widehat\gamma$ (a time step--rescaled value indicating ``how much control'' is present) and $L$ (the number of time steps, dictating the size $\widehat\Tidx=\Tidx-1$ of the matrices).

In summary, each eigenvalue $\sigma$ of $K$ defines a preconditioned matrix $P_\sigma(\alpha)^{-1}\widehat A_\sigma$, whose eigenvalues are also eigenvalues of $\kronm P(\alpha)^{-1}\kronm{\widehat A}$. These eigenvalues influence the iterative solver's convergence, as explained in \cref{sec:pd-track:alpha}. The matrices $P_\sigma(\alpha)^{-1}\widehat A_\sigma$ can be rewritten as the identity plus a low-rank term:

\begin{equation} \label{eq:pd-track:anal:defR}
\begin{aligned}
    P_\sigma(\alpha)^{-1} \widehat A_\sigma &= P_\sigma(\alpha)^{-1}(P_\sigma(\alpha) + (\widehat A_\sigma - P_\sigma(\alpha)))\\
    &= I_t + P_\sigma(\alpha)^{-1}(\widehat A_\sigma - P_\sigma(\alpha)) \eqqcolon I_t + P_\sigma(\alpha)^{-1}R_\sigma    \mcom
\end{aligned}
\end{equation}
from which it follows that the eigenvalues $\theta$ of $P_\sigma(\alpha)^{-1}\widehat A_\sigma$ are equal to one plus the eigenvalues $\omega$ of $P_\sigma(\alpha)^{-1}R_\sigma$. The latter are characteri\sz{}ed by \cref{thm:pd-track:eigs}.

\begin{theorem} \label{thm:pd-track:eigs}
    Let $\widehat\Tidx>3$, $\alpha=\pm1$ and $\varphi,\psi\in\mathbb R\backslash\{0\}$. The $2\widehat\Tidx\times2\widehat\Tidx$ matrix
    \begin{equation} \label{eq:thm:pd-track:eigs:M}
    \begin{aligned}
        M &= \overbrace{\left[\begin{smallarray}{cccc|cccc}
            1&&&-\alpha\varphi&\invisibleminus\psi\\
            -\varphi & 1 &&&&\invisibleminus\psi\\
            &\ddots&\ddots&&&&\ddots\\
            &&-\varphi&1&&&&\invisibleminus\psi\\
            \cmidrule(lr){1-4}\cmidrule(lr){5-8}
            -\psi&&&&1&-\varphi\\
            &\ddots&&&&\ddots&\ddots\\
            &&-\psi&&&&1&-\varphi\\
            &&&-\psi&-\alpha\varphi&&&1\\
        \end{smallarray}\right]^{-1}}^{=P_\sigma(\alpha)^{-1}}\overbrace{
            \left[\begin{smallarray}{cccc|cccc}
                \phantom{1}&&&\phantom{-}\alpha\varphi&\phantom{\invisibleminus\psi}\\
                \phantom{-\varphi} & \phantom{1}&&&&\phantom{\invisibleminus\psi}\\
                &\phantom{\ddots}&\phantom{\ddots}&&&&\phantom{\ddots}\\
                &&\phantom{-\varphi}&\phantom{1}&&&&\phantom{\invisibleminus\psi}\\
                \cmidrule(lr){1-4}\cmidrule(lr){5-8}
                \phantom{-\psi}&&&&\phantom{1}&\phantom{-\varphi}\\
                &\phantom{\ddots}&&&&\phantom{\ddots}&\phantom{\ddots}\\
                &&\phantom{-\psi}&&&&\phantom{1}&\phantom{-\varphi}\\
                &&&\phantom{-\psi}&\phantom{-}\alpha\varphi&&&\phantom{1}\\
            \end{smallarray}\right]}^{=R_\sigma}
    \end{aligned}
    \end{equation}
    has only two potentially non-zero eigenvalues $\omega_{\{1,2\}}$. Specifically,
    \begin{equation} \label{eq:thm:pd-track:eigs:eigs}
        \omega_{\{1,2\}} = \frac1{z_2-z_1}\bigg(
            \frac{z_1-\varphi\pm\psi\iu}{1-\alpha z_1^{\widehat\Tidx}}-\frac{z_2-\varphi\pm\psi\iu}{1-\alpha z_2^{\widehat\Tidx}}
        \bigg)
    \end{equation}
    where
    \begin{equation} \label{eq:thm:pd-track:eigs:z}
        z_{\{1,2\}}=\frac{1+\varphi^2+\psi^2\pm\sqrt{(1+\varphi^2+\psi^2)^2-4\varphi^2}}{2\varphi}    \mper
    \end{equation}
\end{theorem}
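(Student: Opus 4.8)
I would begin by exploiting that the ``perturbation'' $R_\sigma=\widehat A_\sigma-P_\sigma(\alpha)$ in \cref{eq:pd-track:anal:defR} has only two nonzero entries, both equal to $\alpha\varphi$, namely in positions $(1,\widehat\Tidx)$ and $(2\widehat\Tidx,\widehat\Tidx+1)$. Hence $R_\sigma=\alpha\varphi\,UW^\trsp$ with $U=[\,e_1\mid e_{2\widehat\Tidx}\,]$ and $W=[\,e_{\widehat\Tidx}\mid e_{\widehat\Tidx+1}\,]$, where $e_j$ is the $j$-th standard basis vector of $\mathbb C^{2\widehat\Tidx}$. Therefore $M=P_\sigma^{-1}(\alpha)R_\sigma$ has rank at most two, which already gives the first assertion, and by the determinant identity $\det(\lambda I_{2\widehat\Tidx}-M)=\lambda^{2\widehat\Tidx-2}\det\bigl(\lambda I_2-\alpha\varphi\,W^\trsp P_\sigma^{-1}(\alpha)U\bigr)$ its two potentially nonzero eigenvalues are exactly the eigenvalues of the $2\times2$ matrix $\alpha\varphi\,W^\trsp P_\sigma^{-1}(\alpha)U$. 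Its four entries are $\alpha\varphi$ times the $(\widehat\Tidx,1)$, $(\widehat\Tidx,2\widehat\Tidx)$, $(\widehat\Tidx+1,1)$ and $(\widehat\Tidx+1,2\widehat\Tidx)$ entries of $P_\sigma^{-1}(\alpha)$, so the problem reduces to computing those.

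To obtain them, I would write $P_\sigma(\alpha)=\bigl[\begin{smallmatrix}I-\varphi\Pi_\alpha&\psi I\\-\psi I&I-\varphi\Pi_\alpha^\trsp\end{smallmatrix}\bigr]$, where $\Pi_\alpha$ is the $\alpha$-cyclic down-shift. Since $\alpha=\pm1$ we have $\Pi_\alpha^\trsp=\Pi_\alpha^{-1}$, and the $\alpha$-circulant diagonalization \cref{eq:pd-track:alpha:Calpha-fact} gives $\Pi_\alpha=V\Omega_\alpha V^{-1}$ with $V=\Gamma_\alpha^{-1}\dfm^*$ and $\Omega_\alpha=\diag(\omega_0,\dots,\omega_{\widehat\Tidx-1})$, $\omega_k=\alpha^{1/\widehat\Tidx}\E^{2\pi\iu k/\widehat\Tidx}$. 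Conjugating both diagonal blocks of $P_\sigma(\alpha)$ by $V$ yields a matrix that, up to a permutation, is block-diagonal with $\widehat\Tidx$ explicit blocks $\bigl[\begin{smallmatrix}1-\varphi\omega_k&\psi\\-\psi&1-\varphi\omega_k^{-1}\end{smallmatrix}\bigr]$ of determinant $\Delta_k\coloneqq1+\varphi^2+\psi^2-\varphi(\omega_k+\omega_k^{-1})$. Inverting block-by-block and transforming back, the twiddle factors coming from $V$ and $V^{-1}$ combine (using $\alpha^{-1}=\alpha$) so that the four sought entries of $P_\sigma^{-1}(\alpha)$ become $\tfrac{\alpha}{\widehat\Tidx}\sum_k(\omega_k-\varphi)/\Delta_k$, $\tfrac{\alpha}{\widehat\Tidx}\sum_k(\omega_k^{-1}-\varphi)/\Delta_k$, and $\mp\tfrac{\psi}{\widehat\Tidx}\sum_k1/\Delta_k$.

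Finally, I would evaluate these sums over the $\widehat\Tidx$-th roots of $\alpha$, i.e.\ the roots of $w^{\widehat\Tidx}-\alpha=\prod_k(w-\omega_k)$. Factoring $\omega_k\Delta_k=-\varphi(\omega_k-z_1)(\omega_k-z_2)$ identifies $z_{\{1,2\}}$ as the roots of $\varphi z^2-(1+\varphi^2+\psi^2)z+\varphi$, which is exactly \cref{eq:thm:pd-track:eigs:z} and in particular gives $z_1z_2=1$. Partial fractions in $w$ together with the identity $\sum_k(\omega_k-z)^{-1}=-\widehat\Tidx\,z^{\widehat\Tidx-1}/(z^{\widehat\Tidx}-\alpha)$ and the relation $z_1^{-1}=z_2$ collapse every sum to a combination of $S_j\coloneqq(1-\alpha z_j^{\widehat\Tidx})^{-1}$, $j=1,2$. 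A short computation using $z_1z_2=1$ and $\alpha^2=1$ shows $S_1+S_2=1$; this forces the two diagonal entries of $\alpha\varphi\,W^\trsp P_\sigma^{-1}(\alpha)U$ to coincide, so that matrix has the form $\bigl[\begin{smallmatrix}a&b\\-b&a\end{smallmatrix}\bigr]$ with $a=\tfrac{1}{z_2-z_1}\bigl[(z_1-\varphi)S_1-(z_2-\varphi)S_2\bigr]$ and $b=\tfrac{\alpha\psi}{z_1-z_2}(S_1-S_2)$. Its eigenvalues are $a\pm\iu b$, and substituting back the definitions of $S_1,S_2$ gives precisely \cref{eq:thm:pd-track:eigs:eigs}; changing $\alpha$ between $+1$ and $-1$ merely interchanges $\omega_1$ and $\omega_2$.

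The main obstacle is exactly the step just highlighted: the $(\widehat\Tidx,1)$ and $(\widehat\Tidx+1,2\widehat\Tidx)$ entries arise from the genuinely different numerators $\omega_k-\varphi$ and $\omega_k^{-1}-\varphi$ with ``transposed'' index pairs, and after the root-sum evaluation they look unequal until one invokes $S_1+S_2=1$ --- which is itself the place where the constraint $\abs\alpha=1$ of \cref{eq:pd-track:alpha:alphaunitcirc} (here in the sharper form $\alpha^2=1$) is essential. Everything else --- carefully tracking the Fourier and twiddle factors and carrying out the partial-fraction algebra --- is lengthy but routine. The hypothesis $\widehat\Tidx>3$ is needed only so that $P_\sigma(\alpha)$ literally has the banded-plus-corner shape displayed, with the corner entries distinct from the sub- and super-diagonals.
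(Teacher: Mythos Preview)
Your proof is correct and takes a genuinely different route from the paper's. The paper first swaps the two block-rows and applies a $2\times2$ block-inversion identity, which reduces $M$ to a product involving the single matrix $H=(\psi I+\tfrac1\psi C(\alpha)C^\trsp(\alpha))^{-1}$; the observation $C^\trsp(\alpha)C(\alpha)=C(\alpha)C^\trsp(\alpha)$ makes $H$ one symmetric $\alpha$-circulant whose four corner entries determine $M_{\mathrm{red}}$. Those corners are then read off from Searle's explicit circulant-inversion formulas for $\alpha=1$, and for $\alpha=-1$ via a ``combing'' trick that relates the Fourier nodes $\beta_j(\pm1,\widehat\Tidx)$ to $\beta_j(1,2\widehat\Tidx)$. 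You instead factor $R_\sigma=\alpha\varphi\,UW^\trsp$ and use the determinant lemma to land directly on $\alpha\varphi\,W^\trsp P_\sigma^{-1}(\alpha)U$, diagonalise $P_\sigma(\alpha)$ through the $\alpha$-circulant Fourier basis into $\widehat\Tidx$ explicit $2\times2$ blocks, and evaluate the resulting sums over $\{w:w^{\widehat\Tidx}=\alpha\}$ by partial fractions and the logarithmic-derivative identity $\sum_k(\omega_k-z)^{-1}=-\widehat\Tidx z^{\widehat\Tidx-1}/(z^{\widehat\Tidx}-\alpha)$. Your approach is more self-contained (no external inversion result) and treats $\alpha=\pm1$ uniformly in one pass; the paper's route isolates a single symmetric circulant, so the symmetry $h_{0,0}=h_{\en,\en}$, $h_{0,\en}=h_{\en,0}$ that forces the $2\times2$ matrix into the form $\bigl[\begin{smallmatrix}a&b\\-b&a\end{smallmatrix}\bigr]$ is structurally manifest, whereas you recover the equal-diagonals fact through the identity $S_1+S_2=1$ --- which, as you correctly flag, is exactly where the constraint $\alpha^2=1$ enters.
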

\begin{proof}
    The proof of this theorem is given in \cref{sec:apdx-pdte}.
\end{proof}

\clearpage
\begin{corollary} \label{cor:pd-track:anal:thetas}
    Consider a tracking-type all-at-once system with implicit-Euler time discreti\sz{}ation \cref{eq:pd-track:existing:aao-rescaled} where $\Tidx>4$ and $K$ is self-adjoint with eigenvalues $\left\{\sigma_\sidx\right\}_{\sidx=1}^{\Sidx}$. When using ParaDiag with preconditioner $\kronm P(\alpha=\pm 1)$ (see \cref{eq:pd-track:alpha:Palpha}), the eigenvalues of the preconditioned system matrix $\kronm P(\alpha)^{-1}\kronm{\widehat A}$ are all either unity or equal to
    \begin{equation}
        \theta_{\sidx,\{1,2\}} = 1 + \omega_{\sidx,\{1,2\}}
    \end{equation}
    where $\omega_{\sidx,\{1,2\}}$ are given by the formula \cref{eq:thm:pd-track:eigs:eigs}, having filled in $\varphi = (1+\tau\sigma_\sidx)^{-1}$ and $\psi = \frac\tau{\sqrt\gamma}(1+\tau\sigma_\sidx)^{-1}$.
\end{corollary}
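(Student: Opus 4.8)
The plan is to read off \cref{cor:pd-track:anal:thetas} as an essentially immediate consequence of \cref{thm:pd-track:eigs} together with the block-diagonali\sz{}ation set up in the paragraphs preceding that theorem. First I would note that left-multiplying \cref{eq:pd-track:existing:aao-rescaled} by $I\kron(I+\tau K)^{-1}$ (here $B$'s top-left entry is $b_{0,0}=1$) does not change the spectrum of the preconditioned matrix, so $\kronm P^{-1}(\alpha)\kronm{\widehat A}$ and $\kronm P_\mathrm p^{-1}(\alpha)\kronm{\widehat A}_\mathrm p$ are similar. Since $K=K^*$, write $K=V^{-1}\Sigma V$ with $\Sigma$ diagonal; then $\Phi=(I+\tau K)^{-1}$ and $\Psi=\frac\tau{\sqrt\gamma}(I+\tau K)^{-1}$ are functions of $K$, hence simultaneously diagonali\sz{}ed by $V$. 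Both $\kronm{\widehat A}_\mathrm p$ and $\kronm P_\mathrm p(\alpha)$ are assembled from fixed $2\widehat\Tidx\times2\widehat\Tidx$ time-direction matrices Kronecker-multiplied by $I$, $\Phi$ and $\Psi$, so conjugating by a permutation composed with $I\kron V$ simultaneously block-diagonali\sz{}es them into blocks indexed by the eigenvalues $\sigma_\sidx$ of $K$, the $\sigma_\sidx$-block being $\widehat A_{\sigma_\sidx}$ (respectively $P_{\sigma_\sidx}(\alpha)$) with $\varphi=(1+\tau\sigma_\sidx)^{-1}$ and $\psi=\frac\tau{\sqrt\gamma}(1+\tau\sigma_\sidx)^{-1}$ as in \cref{eq:pd-track:anal:phipsi}. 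Therefore $\mathrm{spec}\bigl(\kronm P^{-1}(\alpha)\kronm{\widehat A}\bigr)=\bigcup_{\sidx=1}^{\Sidx}\mathrm{spec}\bigl(P_{\sigma_\sidx}^{-1}(\alpha)\widehat A_{\sigma_\sidx}\bigr)$.

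Next I would invoke the identity-plus-low-rank decomposition \cref{eq:pd-track:anal:defR}: for each $\sidx$, $P_{\sigma_\sidx}^{-1}(\alpha)\widehat A_{\sigma_\sidx}=I+P_{\sigma_\sidx}^{-1}(\alpha)R_{\sigma_\sidx}$, and $R_{\sigma_\sidx}=\widehat A_{\sigma_\sidx}-P_{\sigma_\sidx}(\alpha)$ is exactly the matrix whose only nonzero entries are the two corner entries $\alpha\varphi$ appearing in \cref{eq:thm:pd-track:eigs:M}. Hence the eigenvalues of $P_{\sigma_\sidx}^{-1}(\alpha)\widehat A_{\sigma_\sidx}$ are $1$ plus the eigenvalues of $P_{\sigma_\sidx}^{-1}(\alpha)R_{\sigma_\sidx}$, i.e.\ of the matrix $M$ of \cref{thm:pd-track:eigs} evaluated at the above $\varphi,\psi$.

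It then remains only to verify the hypotheses of \cref{thm:pd-track:eigs} for each $\sidx$. The assumption $\alpha=\pm1$ is inherited; $\widehat\Tidx=\Tidx-1>3$ follows from $\Tidx>4$; and $\varphi\in\mathbb R\backslash\{0\}$ because $K=K^*$ forces $\sigma_\sidx\in\mathbb R$ while the implicit-Euler step operator $I+\tau K$ is invertible (which is already presupposed for $\kronm{\widehat A}$, $\widehat A_{\sigma_\sidx}$ to be defined), whence also $\psi=\frac\tau{\sqrt\gamma}\varphi\ne0$ since $\tau,\gamma>0$. \Cref{thm:pd-track:eigs} then gives that $M$ has at most the two nonzero eigenvalues $\omega_{\sidx,\{1,2\}}$ of \cref{eq:thm:pd-track:eigs:eigs}, so the $\sigma_\sidx$-block contributes only the eigenvalue $1$ and the eigenvalues $\theta_{\sidx,\{1,2\}}=1+\omega_{\sidx,\{1,2\}}$. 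Forming the union over $\sidx$ yields the corollary.

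There is essentially no hard step here: the analytic content lives entirely in \cref{thm:pd-track:eigs}, and what remains is bookkeeping. The only points that need care are the Kronecker/permutation similarity that peels off the $\sigma_\sidx$-blocks (already announced in the text above \cref{eq:pd-track:anal:defR}) and the index translation $\Tidx>4\Leftrightarrow\widehat\Tidx>3$ needed to match the theorem's hypothesis; if desired one could additionally record that the eigenvalue $1$ occurs with multiplicity $2\widehat\Tidx-2$ in each block, but this is not required for the statement as phrased.
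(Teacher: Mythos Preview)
Your proposal is correct and follows precisely the approach the paper takes: the corollary is presented without a separate proof because it is an immediate consequence of the block-diagonali\sz{}ation and identity-plus-low-rank setup in \cref{sec:pd-track:anal} combined with \cref{thm:pd-track:eigs}, and you have simply spelled out those steps, including the translation $\Tidx>4\Leftrightarrow\widehat\Tidx>3$ needed to match the theorem's hypothesis.
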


\begin{corollary} \label{cor:pd-track:anal:semidisc}
    Consider a tracking-type all-at-once system with implicit-Euler time discreti\sz{}ation \cref{eq:pd-track:existing:aao-rescaled} where $\Tidx>4$ and $K$ is self-adjoint. Denote by $\mathcal D_{0.5,+}$ the right half of a dis\ukus{c}{k} in the complex plane, cent\ukus{re}{ere}d at $0.5$ and with radius $0.5$. When using ParaDiag with preconditioner $\kronm P(\alpha=-1)$ (see \cref{eq:pd-track:alpha:Palpha}), if $0<\varphi<1$ for all $\varphi$ (which occurs whenever $K$ is positive definite), all eigenvalues of the preconditioned matrix $\kronm P(\alpha)^{-1}\kronm{\widehat A}$ lie within $\mathcal D_{0.5,+}$.
\end{corollary}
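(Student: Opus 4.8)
The plan is to lean entirely on \cref{cor:pd-track:anal:thetas}: every eigenvalue of $\kronm P^{-1}(-1)\kronm{\widehat A}$ is either $1$ or of the form $\theta = 1+\omega$, with $\omega$ one of the two values in \cref{eq:thm:pd-track:eigs:eigs} for $\alpha=-1$, evaluated at $\varphi = (1+\tau\sigma_\sidx)^{-1}\in(0,1)$ and $\psi = \frac\tau{\sqrt\gamma}(1+\tau\sigma_\sidx)^{-1}$. The point $1$ lies in $\mathcal D_{0.5,+}$ (it is on the bounding circle and has real part $\ge\tfrac12$), so it remains to put every $\theta = 1+\omega$ into $\mathcal D_{0.5,+}$. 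Since $\varphi\ne1$ forces $1+\varphi^2+\psi^2>2\varphi$, the numbers $z_{\{1,2\}}$ of \cref{eq:thm:pd-track:eigs:z} are real, positive, distinct, and satisfy $z_1z_2=1$; I would set $z\coloneqq z_2>1$, so $z_1 = 1/z$ and $z_2 - z_1 = (z^2-1)/z$, and abbreviate $p\coloneqq z^{\widehat\Tidx}>1$.

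Next I would reduce $\theta\in\mathcal D_{0.5,+}$ to the two scalar conditions $|2\theta-1|\le1$ (the disk) and $\operatorname{Re}\theta\ge\tfrac12$ (the right half). Writing $\theta = 1 + a \pm \iu b$ with $a,b\in\mathbb R$ — which is legitimate precisely because choosing $\alpha=-1$ turns the denominators $1-\alpha z_i^{\widehat\Tidx}$ in \cref{eq:thm:pd-track:eigs:eigs} into the real, strictly positive numbers $1+z_i^{\widehat\Tidx}$, so that the only imaginary part is the explicit $\pm\psi\iu$ — these two conditions are equivalent to $a^2+a+b^2\le0$ and $a\ge-\tfrac12$, respectively (and since $\mathcal D_{0.5,+}$ is symmetric about the real axis, the two conjugate values of $\omega$ are handled simultaneously). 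Substituting $z_1=1/z$, $z_2=z$ into \cref{eq:thm:pd-track:eigs:eigs} and introducing the auxiliary reals $u\coloneqq\varphi z-1\ge0$ and $v\coloneqq z-\varphi>0$ — nonnegativity of $u$ following from $\psi^2 = uv/z\ge0$, itself a consequence of the quadratic $\varphi z^2-(1+\varphi^2+\psi^2)z+\varphi=0$ satisfied by $z$ — a short computation should give
\[
    -a \;=\; \frac{pu + zv}{(z^2-1)(p+1)}, \qquad\qquad b \;=\; \frac{\psi\,z\,(p-1)}{(z^2-1)(p+1)}.
\]

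The crux is the identity $u + zv = z^2 - 1$, which likewise comes straight from the quadratic for $z$ (together with $\psi^2 z = uv$). Feeding it in, the numerator of $a^2+a+b^2$, namely $(pu+zv)^2 - (pu+zv)(z^2-1)(p+1) + uvz(p-1)^2$, should telescope down to $-p\,(z^2-1)^2$, whence $a^2+a+b^2 = -p/(p+1)^2 < 0$ and the disk bound holds with strict inequality. For the right-half bound the same identity gives $-a-\tfrac12 = -(p-1)(z^2-2\varphi z+1)\big/\bigl(2(z^2-1)(p+1)\bigr)$, which is negative because $z^2-2\varphi z+1>0$ (its discriminant $4(\varphi^2-1)$ is negative for $\varphi<1$). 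Hence $\operatorname{Re}\theta>\tfrac12$, so every $\theta = 1+\omega$ lies in the interior of $\mathcal D_{0.5,+}$; combined with $1\in\mathcal D_{0.5,+}$ this proves the corollary.

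I expect the main obstacle to be carrying out the simplification in the third paragraph cleanly — that is, spotting and using $u+zv=z^2-1$ so that the two rational expressions in \cref{eq:thm:pd-track:eigs:eigs} collapse to the stated compact forms rather than an unwieldy polynomial in $p$, $\varphi$ and $\psi$. It is also worth noting that $\alpha=-1$ is essential: for $\alpha=1$ the denominators $1-z_i^{\widehat\Tidx}$ are no longer bounded away from $0$ (indeed $z_1^{\widehat\Tidx}\to1$ as $\tau\to0$), which is exactly the dispersion of eigenvalues observed in \cref{fig:pd-track:eigs-0.0001:1}.
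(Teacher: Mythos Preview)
Your argument is correct and takes a genuinely different route from the paper. The paper proves the corollary via \cref{lmm:apdx-proof:pd-track:thetas}: it first shows that $\Re(\omega)$ and $\Im(\omega)$ are monotone in $\widehat\Tidx$ by differentiating (parts (a)--(b)), then bounds $\Re(\omega)$ from below by its value at $\widehat\Tidx=1$ (part~(c)) and bounds $\abs{\tfrac12+\omega}^2$ from above by its $\widehat\Tidx\to\infty$ limit, which evaluates to exactly $\tfrac14$ (part~(d)). You skip the monotonicity detour entirely: working with the quadratic satisfied by $z$ through the auxiliaries $u=\varphi z-1$, $v=z-\varphi$ and the identities $u+zv=z^2-1$ and $uv=\psi^2 z$, you obtain the closed-form equality $a^2+a+b^2=-p/(p+1)^2$ valid for every $\widehat\Tidx$, rather than an upper bound attained only in the limit. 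Your route is shorter, more self-contained, and yields a sharper statement (strict containment in the open semi-disk for all finite $\widehat\Tidx$); the paper's approach, in exchange, produces the monotonicity of the eigenvalues in $\widehat\Tidx$ as a byproduct, which is structural information your direct computation does not surface.

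One labelling slip: in the paper's sign convention for \cref{eq:thm:pd-track:eigs:z}, $z_1$ carries the $+$ and hence $z_1>1>z_2$ (\cref{lmm:apdx-proof:pd-track:zs}(a)); your ``$z\coloneqq z_2>1$'' should read $z\coloneqq z_1>1$. Since the expression \cref{eq:thm:pd-track:eigs:eigs} is antisymmetric in $z_1\leftrightarrow z_2$ in both prefactor and bracket, your downstream algebra (which only uses $z>1$) is unaffected once the label is corrected.
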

\begin{proof}
    \Cref{lmm:apdx-proof:pd-track:thetas}(c) shows that the real parts of these eigenvalues are larger than $0.5$, while \cref{lmm:apdx-proof:pd-track:thetas}(d) proves that their distances from the point $0.5$ are less than $0.5$. Together, these bounds delineate the region $\mathcal D_{0.5,+}$.
\end{proof}

    \subsection{Interpreting the eigenvalue results} \label{sec:pd-track:interp}
    It is possible to visuali\sz{}e \cref{cor:pd-track:anal:thetas} to gain more insight into how the two preconditioners (that is, $\alpha=1$ and $\alpha=-1$) perform, as well as how they compare. Recall from \cref{sec:pd-track:anal} that every eigenvalue $\sigma_\sidx$ of $K$ corresponds to two non-unity eigenvalues $\theta_{\sidx,\{1,2\}}$ of the preconditioned system matrix, which are complex conjugates of each other. \Cref{fig:pd-track:eigs} plots the $\theta$ with positive imaginary part for the cases $\alpha=\pm1$ in two distinct ways. This section considers dissipative problems, where $K$ is positive definite and hence $\sigma>0$.

\Cref{fig:pd-track:plane:m1,fig:pd-track:plane:p1} are based on the view that $\widehat\gamma$ is typically known (one can set the regulari\sz{}ation parameter $\gamma$ and the time step $\tau$), while the eigenvalues $\sigma$ (and thus $\widehat\sigma=\tau\sigma$) could lie anywhere. For each $\widehat\gamma$ value, these figures mark the preconditioned eigenvalues for a whole range of $\widehat\sigma>0$ options. We can see \cref{cor:pd-track:anal:semidisc} in action: for $\alpha=-1$ the eigenvalues lie inside $\mathcal D_{0.5,+}$ while, for $\alpha=1$, they lie outside it. \Cref{fig:pd-track:eigs-abs:m1,fig:pd-track:eigs-abs:p1} add $\widehat\sigma$ as a dimension to gain more insight into its influence.

We stress that from these figures, little if anything can be said about how ParaDiag scales when increasing $\Tidx$, the topic of \cref{sec:scale}. Instead, we conclude that \emph{for a fixed number of time steps}, ParaDiag can be expected to converge quickly unless both \begin{itemize}
    \item the equation in the absence of control evolves slowly \emph{relatively to the size of the time interval} ($\widehat\sigma \approx 0$); and
    \item there is little control \emph{relatively to the size of the time interval} ($\widehat\gamma \approx 0$).
\end{itemize}
If these conditions for potentially slow convergence are met, the difference between $\alpha$ values becomes important. \begin{itemize}
    \item When $\alpha=1$, the smaller $\widehat\sigma$ and $\widehat\gamma$ become, the more $\theta$ blows up. This will lead to preconditioned eigenvalues that lie far away from each other, resulting in slow convergence.
    \item When $\alpha=-1$, small values of $\widehat\sigma$ and $\widehat\gamma$ slightly pull the eigenvalues away from unity. However, they always stay relatively close due to \cref{cor:pd-track:anal:semidisc}. In addition, clustering may even become better for very small $\widehat\sigma$ and $\widehat\gamma$: \cref{fig:pd-track:plane:m1} shows that the worst clustering occurs at intermediate $\widehat\gamma$s.
\end{itemize}
This analysis explains our observations in \cref{fig:pd-track:eigs-1,fig:pd-track:eigs--1}. When $T=1$, $\widehat\sigma$ and $\widehat\gamma$ are large enough that the eigenvalues lie close to the edge of $\mathcal D_{0.5,+}$ for both $\alpha=\pm1$. When $T=10^{-4}$, $\widehat\sigma$ and $\widehat\gamma$ are very small and $\alpha=-1$ clusters, while $\alpha=1$ disperses.

\begin{figure}
    \centering
    \begin{subfigure}[b]{.4\textwidth}
        \includegraphics[width=\textwidth]{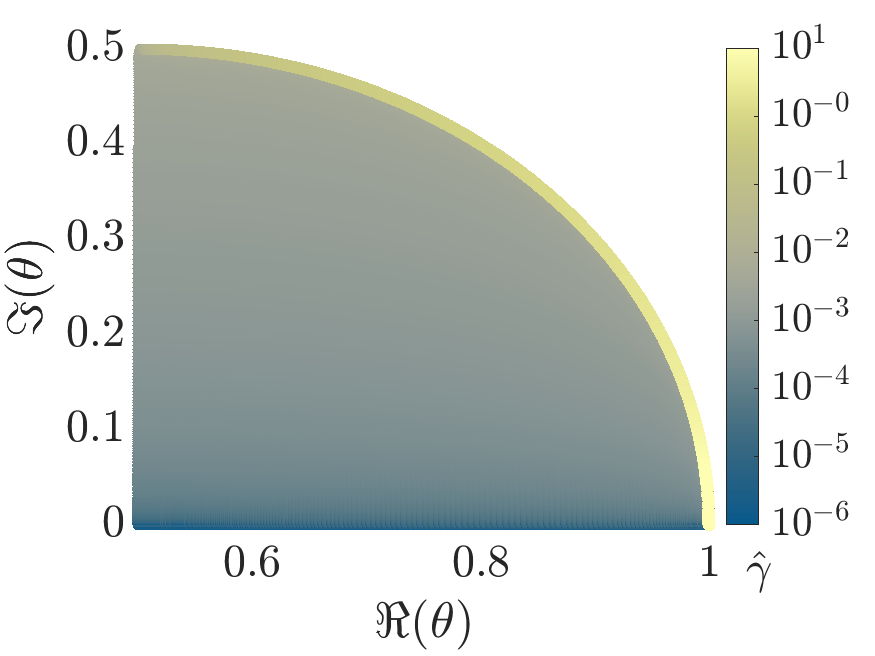}
        \caption{
            {$\theta$ when $\alpha=-1$}
        }
        \label{fig:pd-track:plane:m1}
    \end{subfigure}
    \hfill
    \begin{subfigure}[b]{.4\textwidth}
        \includegraphics[width=\textwidth]{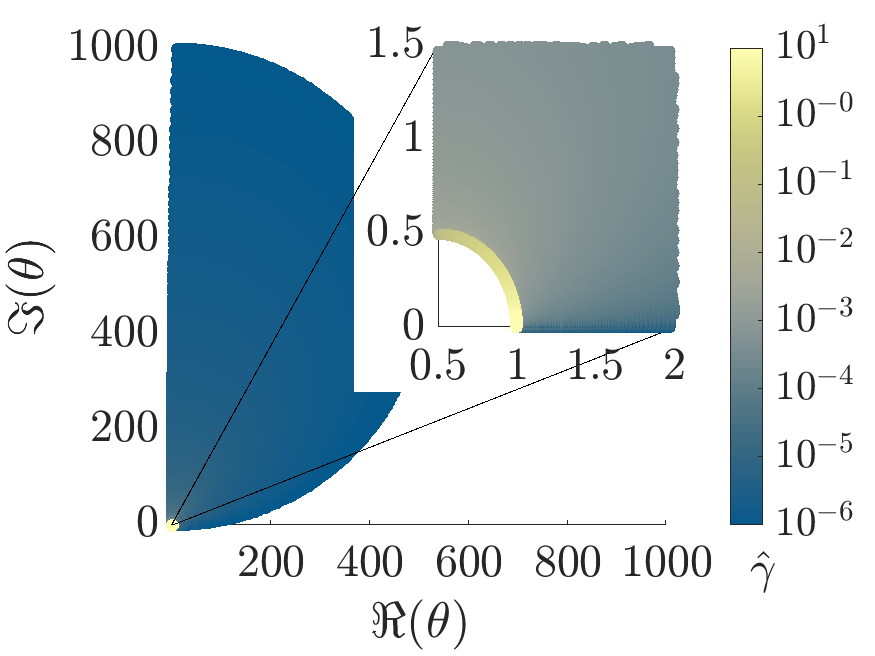}
        \caption{
            {$\theta$ when $\alpha=1$}
        }
        \label{fig:pd-track:plane:p1}
    \end{subfigure}
    \vfill
    \begin{subfigure}[b]{.4\textwidth}
        \includegraphics[width=\textwidth]{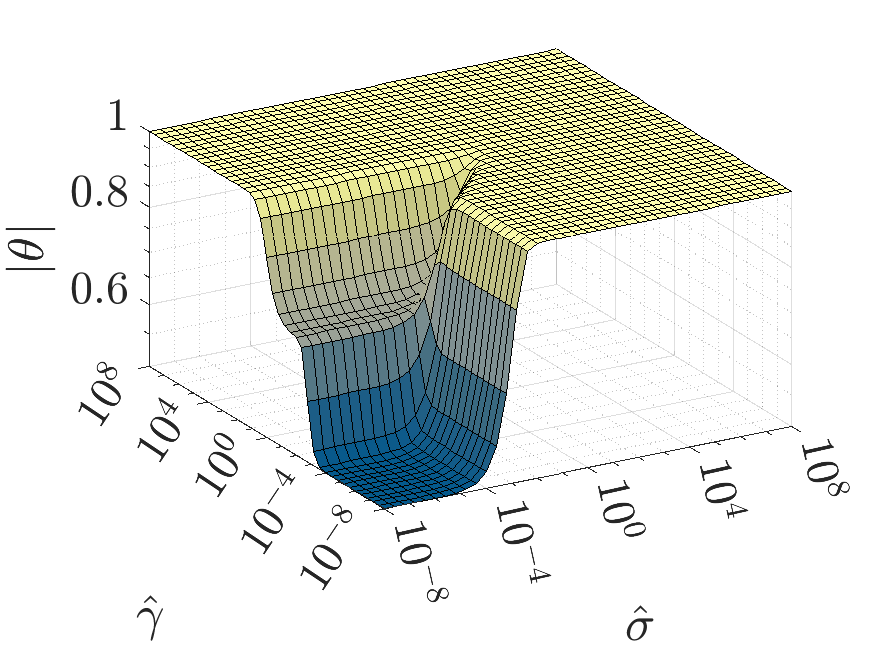}
        \caption{
            {$\abs\theta$ when $\alpha=-1$}
        }
        \label{fig:pd-track:eigs-abs:m1}
    \end{subfigure}
    \hfill
    \begin{subfigure}[b]{.4\textwidth}
        \includegraphics[width=\textwidth]{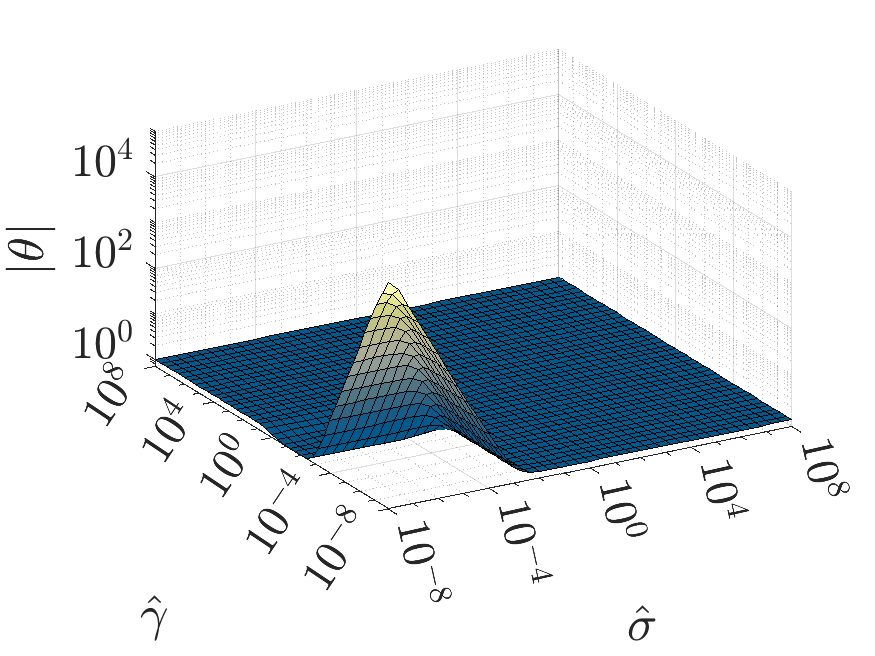}
        \caption{
            {$\abs\theta$ when $\alpha=1$}
        }
        \label{fig:pd-track:eigs-abs:p1}
    \end{subfigure}
    \caption{
        {Non-unity eigenvalue $\theta$ of $P(\alpha)^{-1}\widehat A$ with $\Im(\theta)\ge0$ for $\Tidx=1000$. Note the scale of the axes in the figures on the right, due to the effects of $\alpha=1$. The col\ou{}r maps used throughout this text were designed in \cite{crameriScientificColourMaps2021} to be col\ou{}r-vision--deficiency friendly.}
    }
    \label{fig:pd-track:eigs}
    \vspace{-.8cm}
\end{figure}

\newpage
Specifically for the \textsc{gmres} method, it is possible to harness \cref{cor:pd-track:anal:semidisc} into an upper bound on the convergence of the iterative method.
\begin{theorem}
    Consider the tracking-type all-at-once system with implicit-Euler time discreti\sz{}ation \cref{eq:pd-track:existing:aao-rescaled} where $\Tidx>4$ and $K$ is self-adjoint. When using ParaDiag with \textsc{gmres} preconditioned by $\kronm P(-1)$ (see \cref{eq:pd-track:alpha:Palpha}) and if $0<\varphi<1$ for all $\varphi$ (which occurs whenever $K$ is positive definite), the following holds. For any $0<\rho<2$, there exists a $\kappa_\rho > 0$ such that the residual $\kronv r^\iidx$ at \textsc{gmres} iteration $\iidx$ satisfies
    \begin{equation} \label{eq:thm:pd-track:anal:expgmres}
        \norm{\kronv r^\iidx}_2/\norm{\kronv r^0}_2 \le \kappa(\kronm V)\kappa_\rho\rho^{-\iidx}    \mper
    \end{equation}
    Here, $\kappa(\kronm V)$ is the condition number of the eigenvector matrix $\kronm V$ of $\kronm P(-1)^{-1}\kronm{\widehat A}$. The \textsc{gmres} residual decreases exponentially with a mesh- and problem-independent factor.
\end{theorem}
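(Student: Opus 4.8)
The plan is to feed \cref{cor:pd-track:anal:semidisc} into the standard eigenvalue bound for \textsc{gmres} \cite{saadGMRESGeneralizedMinimal1986a} and then to estimate the resulting polynomial-approximation problem on the \emph{fixed} region $\mathcal D_{0.5,+}$. Writing $\kronm P(-1)^{-1}\kronm{\widehat A}=\kronm V\Theta\kronm V^{-1}$ with $\Theta$ diagonal and letting $\Pi_\iidx$ be the polynomials of degree at most $\iidx$, the bound reads
\[
    \norm{\kronv r^\iidx}_2\big/\norm{\kronv r^0}_2 \;\le\; \kappa(\kronm V)\,\min_{\substack{p\in\Pi_\iidx\\ p(0)=1}}\;\max_{\theta\in\diag\Theta}\abs{p(\theta)}.
\]
By \cref{cor:pd-track:anal:semidisc} every eigenvalue $\theta$ lies in $\mathcal D_{0.5,+}$, independently of $\Tidx$, $\Sidx$, $K$ and $\gamma$, so the inner maximum is at most $\varepsilon_\iidx\coloneqq\min_{p\in\Pi_\iidx,\,p(0)=1}\max_{z\in\mathcal D_{0.5,+}}\abs{p(z)}$, and the statement reduces to showing that for every $\rho\in(0,2)$ there is a $\kappa_\rho>0$ with $\varepsilon_\iidx\le\kappa_\rho\rho^{-\iidx}$. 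This is a property of the single compact set $\mathcal D_{0.5,+}$ (which avoids $0$), which is why it comes out mesh- and problem-independent.

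To get it I would invoke the potential-theoretic description of such minimax polynomials: for a non-polar compact $E$ with $0$ in the unbounded component of $\overline{\mathbb C}\setminus E$, the Bernstein--Walsh inequality yields $\varepsilon_\iidx\le\kappa_\rho\rho^{-\iidx}$ for every $\rho<\mathrm e^{g_\Omega(0,\infty)}$, where $g_\Omega(\cdot,\infty)$ is the Green's function of $\Omega\coloneqq\overline{\mathbb C}\setminus E$ with pole at $\infty$. It then remains to show $g_\Omega(0,\infty)=\ln 2$ for $E=\mathcal D_{0.5,+}$. I would do this with an explicit chain of conformal maps, using that $\mathcal D_{0.5,+}$ is a half-disk whose flat side meets the circular arc at right angles in the corners $\tfrac12\pm\tfrac12\iu$: the affine map $\zeta=2z-1$ sends $\Omega$ onto the complement of the standard half-disk $\{\abs{\zeta}\le1,\ \Re\zeta\ge0\}$, with $0\mapsto-1$ and $\infty\mapsto\infty$; the M\"obius map $u=(\zeta-\iu)/(\zeta+\iu)$ blows the two corners up to $0$ and $\infty$ and carries that complement onto the wedge $\{\arg u\in(-\pi/2,\pi)\}$ of opening $3\pi/2$, with $-1\mapsto\iu$ and $\infty\mapsto1$; finally a power map $u\mapsto u^{2/3}$ followed by a rotation opens this wedge to the upper half-plane $\mathbb H$ and sends $\iu\mapsto\mathrm e^{2\pi\iu/3}$ and $1\mapsto\mathrm e^{\pi\iu/3}$.

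Conformal invariance of the Green's function and the explicit half-plane formula $g_{\mathbb H}(a,b)=\ln\abs{(a-\bar b)/(a-b)}$ then give
\[
    g_\Omega(0,\infty)=g_{\mathbb H}\bigl(\mathrm e^{2\pi\iu/3},\mathrm e^{\pi\iu/3}\bigr)=\ln\left\lvert\frac{\mathrm e^{2\pi\iu/3}-\mathrm e^{-\pi\iu/3}}{\mathrm e^{2\pi\iu/3}-\mathrm e^{\pi\iu/3}}\right\rvert=\ln\frac{\abs{-1+\iu\sqrt3}}{\abs{-1}}=\ln 2,
\]
so $\varepsilon_\iidx\le\kappa_\rho\rho^{-\iidx}$ for every $\rho<2$; combined with the first display this is exactly \cref{eq:thm:pd-track:anal:expgmres}, with $\kappa_\rho$ fixed by the geometry of $\mathcal D_{0.5,+}$ alone. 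A fully self-contained alternative to the potential-theoretic limit is to exhibit near-extremal polynomials directly --- for instance normalized Faber polynomials of $\mathcal D_{0.5,+}$, or polynomial approximants of $z\mapsto G(z)^{-\iidx}$ with $G$ the map of $\Omega$ onto the exterior of the unit disk assembled from the chain above --- and to bound their sup-norm on $\mathcal D_{0.5,+}$. The main obstacle I foresee is pinning down the sharp constant $2$: the best enclosing disk of $\mathcal D_{0.5,+}$ only gives $\rho\le\sqrt2$, so the conformal geometry of the half-disk genuinely has to be worked out, with some care over branch cuts and over the fact that only an asymptotic rate --- not a clean per-step contraction --- is being claimed.
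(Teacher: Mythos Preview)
Your proof is correct and follows the same overall strategy as the paper: the standard \textsc{gmres} eigenvalue bound combined with \cref{cor:pd-track:anal:semidisc} reduces everything to a polynomial-approximation problem on the fixed half-dis\ukus{c}{k} $\mathcal D_{0.5,+}$, and a conformal-mapping argument then delivers the rate $2$. The packaging differs. The paper first rewrites the constraint $p(0)=1$ as approximation of $1/z$ by degree-$(\iidx-1)$ polynomials on $\mathcal D_{0.5,+}$ and invokes a theorem of Saff via the Riemann map $w$ of the exterior onto $\{\abs w>1\}$, finding $\abs{w(0)}=2$ through a five-step chain $w_5\circ\cdots\circ w_1$; you keep the constraint $p(0)=1$ and compute the Green's function $g_\Omega(0,\infty)=\ln 2$ directly by mapping to the upper half-plane. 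The two formulations are equivalent since $g_\Omega(0,\infty)=\ln\abs{w(0)}$, and your conformal chain (affine, M\"obius sending the corners to $0$ and $\infty$, power $u\mapsto u^{2/3}$, rotation) checks out---including the half-plane evaluation $g_{\mathbb H}(\E^{2\pi\iu/3},\E^{\pi\iu/3})=\ln 2$.

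One small quibble: the Bernstein--Walsh \emph{inequality} by itself gives only the lower bound $\varepsilon_\iidx\ge\E^{-\iidx g_\Omega(0,\infty)}$; the upper bound $\varepsilon_\iidx\le\kappa_\rho\rho^{-\iidx}$ you actually need comes from constructing near-extremal polynomials (Fekete, Leja or the Faber polynomials you mention). You list this as a ``self-contained alternative'', but it is in fact the ingredient that closes the argument, so it should be stated as such rather than as optional.
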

\begin{proof}
    From \cite{trefethenNumericalLinearAlgebra1997a}, we retrieve the formula
    \begin{equation} \label{eq:thm:pd-track:anal:expgmres:gmresorig}
        \norm{\kronv{r^\iidx}}_2/\norm{\kronv{r^0}}_2 \le \kappa(\kronm V)\inf_{p_\iidx\in\mathbb P_\iidx}{\sup_{\sigma\in\Sigma}{\abs{p_\iidx(\sigma)}}}
    \end{equation}
    which asks us to solve a polynomial-approximation problem: find a degree-$\iidx$ polynomial that takes the value $1$ at the origin and, yet, is as small as possible on all the eigenvalues of the preconditioned matrix. However, if we want a generally applicable bound, we do not know these eigenvalues. Luckily, we have \cref{cor:pd-track:anal:semidisc}: if we can find a polynomial that is small on the \emph{entire} semi-dis\ukus{c}{k} $\mathcal D_{0.5,+}$, then a fortiori, it must also be small on whatever eigenvalues a specific problem happens to generate.

    To eliminate the explicit condition $p(0)=1$, we take the following steps. If we can find a degree-$k$ polynomial $\widehat p$ that satisfies $\widehat p(0)=0$ and approximates $1$ on $\mathcal D_{0.5,+}$, it has the same error as $p(z) = 1-\widehat p(z)$. Since $\widehat p(0)=0$, it must be that $\widehat p(z) = zq(z)$, where $q$ is of degree $\iidx-1$. If we find a polynomial $q$ that approximates $1/z$ on $\mathcal D_{0.5,+}$, we have a function $\widehat p(z) = zq(z)$ which approximates $1$ with no higher error than that of $q$ in approximating $1/z$ (this follows easily from the fact that $\abs z\le1$).

    In summary, \cref{eq:thm:pd-track:anal:expgmres:gmresorig}'s rightmost factor is bounded by the best degree-$(\iidx-1)$ polynomial-approximation error to $1/z$ on $\mathcal D_{0.5,+}$, which is bounded by \cref{lmm:apdx-proof:pd-track:polappr}.
\end{proof}

    \subsection{Generali\sz{}ing past self-adjoint problems} \label{sec:pd-track:gen}
    We now extend ParaDiag to the more general, non-self-adjoint setting where $K\ne K^*$ is possible, resulting in the optimality system \cref{eq:intro:intro:optsys}. The method supports using any adjoint spatial discreti\sz{}ation, which could be of interest, but we will limit ourselves to $K^*$. Then \cref{eq:pd-track:existing:aao} becomes
\begin{equation} \label{eq:pd-track:gen:aao}
    \kronm A\begin{litmat}
        \kronv y\\ \kronv{\ad}\\
    \end{litmat} \coloneqq \left(\begin{litmat}
        B & \tau\frac{I_t}{\gamma}\\
        -\tau I_t & B^\trsp\\
    \end{litmat} \kron I_x + \tau\begin{litmat}
        I_t \kron K\\&I_t \kron K^*\\
    \end{litmat} \right)\begin{litmat}
        \kronv y\\ \kronv{\ad}\\
    \end{litmat} = \begin{litmat}
        \kronv{b_1}\\ \kronv{b_2}\\
    \end{litmat}
\end{equation}
and, after rescaling,
\begin{equation} \label{eq:pd-track:gen:aao-rescaled}
    \kronm{\widehat A}\begin{litmat}
        \kronv y\\ \kronv{\widehat\ad}\\
    \end{litmat} \coloneqq \left(\begin{litmat}
        B & \tau\frac{I_t}{\sqrt\gamma}\\
        -\tau\frac{I_t}{\sqrt\gamma} & B^\trsp\\
    \end{litmat} \kron I_x + \tau\begin{litmat}
        I_t \kron K\\&I_t \kron K^*\\
    \end{litmat} \right)\begin{litmat}
        \kronv y\\ \kronv{\widehat\ad}\\
    \end{litmat} = \begin{litmat}
        \kronv{b_1}\\ \kronv{\widehat b_2}\\
    \end{litmat}    \mper
\end{equation}
We suggest an alpha-circulant preconditioner that replaces $B$ by $C(\alpha)$, factori\sz{}ing as
\begin{equation} \label{eq:pd-track:gen:P}
    \kronm P(\alpha) \coloneqq (V\kron I_x)\Biggl(\begin{litmat}
        D(\alpha) & \frac{\tau I_t}{\sqrt\gamma}\\
        -\frac{\tau I_t}{\sqrt\gamma} & D(\alpha)^*\\
    \end{litmat} \kron I_x + \tau\begin{litmat}
        I_t \kron K\\&I_t \kron K^*\\
    \end{litmat}\Biggr)(V^{-1}\kron I_x)
\end{equation}
by using the property \cref{eq:pd-track:alpha:Calpha-fact}. As $K\ne K^*$ is possible in this generali\sz{}ed case, a further factori\sz{}ation such as the one from \cref{eq:pd-track:existing:P-fac} to \cref{eq:pd-track:existing:P-fac-further} cannot be reproduced with this preconditioner. However, inversion of \cref{eq:pd-track:gen:P}'s middle factor can already be paralleli\sz{}ed in the time direction; all time steps have been decoupled. This procedure only misses out on the additional paralleli\sz{}ation factor of 2 that decoupling the state and adjoint equations in \cref{eq:pd-track:existing:P-fac-further} provides in the self-adjoint case.

\Cref{alg:pd-track:new} incorporates the alpha-circulant improvement from \cref{sec:pd-track:alpha}, as well as the above generali\sz{}ation. It can be compared to \cref{alg:pd-track:existing}.

\begin{algorithm}
    \caption{ParaDiag for solving the generali\sz{}ed tracking problem \cref{eq:pd-track:gen:aao}} \label{alg:pd-track:new}
    \begin{tabular}{rl}
        \textbf{Input:}  &Vectors $\kronv{b_1}$ and $\kronv{b_2}$ defined by \cref{eq:pd-track:existing:Bbb}\\
                            &\emph{Arbitrary} matrix $K$ characterising the problem by \cref{eq:intro:intro:optsys}\\
                            &Matrix $D(\alpha)$ following from the time discreti\sz{}ation by \cref{eq:pd-track:alpha:Calpha-fact} ($\hspace{.03cm}\abs\alpha=1$)\\
        \textbf{Output:} &The vectors $\kronv y$ and $\kronv\ad=\sqrt\gamma\kronv{\widehat\ad}$ that solve \cref{eq:pd-track:gen:aao}\\
    \end{tabular}
    \begin{algorithmic}[1]
        \State Rescale $\kronv{\widehat b_2} =  \kronv{b_2}/\sqrt\gamma$.
        \State Solve \cref{eq:pd-track:gen:aao-rescaled} for $\kronv y$ and $\kronv{\widehat\ad}$ using an iterative method, with preconditioner $\kronm P(\alpha)$ from \cref{eq:pd-track:gen:P}. When asked to compute $\bigl[\begin{smallmatrix}\kronv x\\ \kronv z\end{smallmatrix}\bigr] = \kronm P(\alpha)^{-1}\bigl[\begin{smallmatrix}\kronv v\\ \kronv w\end{smallmatrix}\bigr]$:
            \State \hskip1cm Calculate $\kronv{r_1} \coloneqq (\dfm \Gamma_\alpha \kron I_x)\kronv{v}$, $\kronv{s_1} \coloneqq (\dfm \Gamma_\alpha \kron I_x)\kronv{w}$ with the (parallel) \textsc{fft}.
            \State \hskip1cm For $\tidx=\{1, \ldots, \widehat\Tidx\}$, solve (in parallel)
                \begin{equation}
                    \begin{litmat}\mv{r_{2,\tidx}}\\\mv{s_{2,\tidx}}\end{litmat}\coloneqq
                    \begin{litmat}
                        d_{\tidx,\tidx}(\alpha)I_x+\tau K & \frac\tau{\sqrt\gamma}I_x\\
                        -\frac\tau{\sqrt\gamma}I_x & d_{\tidx,\tidx}(\alpha)^*I_x + \tau K^*\\
                    \end{litmat}^{-1}
                    \begin{litmat}\mv{r_{1,\tidx}}\\\mv{s_{1,\tidx}}\end{litmat}    \mper
                \end{equation}
            \State \hskip1cm Calculate $\kronv{x} = (\Gamma_\alpha^{-1}\dfm ^* \kron I_x)\kronv{r_2}$, $\kronv{z} = (\Gamma_\alpha^{-1}\dfm ^* \kron I_x)\kronv{s_2}$ with the (parallel) \textsc{fft}.
    \end{algorithmic}
\end{algorithm}

\section{ParaDiag for terminal-cost objectives} \label{sec:pd-tc}
    Both the literature on ParaDiag and this paper have thus far focused on the tracking objective in \cref{eq:intro:intro:obj}. We next develop a ParaDiag-type preconditioner for problems with the terminal-cost objective function, without requiring self-adjointness. The method is designed in \cref{sec:pd-tc:new}, after which it is analy\sz{}ed for self-adjoint problems in \cref{sec:pd-tc:anal,sec:pd-tc:interp}.

    \subsection{A new preconditioner} \label{sec:pd-tc:new}
    The optimality system in the terminal-cost case can be discreti\sz{}ed with time step $\tau$ to form the all-at-once system
\begin{equation} \label{eq:pd-tc:new:aao}
    \kronm A\begin{litmat}
        \kronv y\\ \kronv\ad\\
    \end{litmat} \coloneqq \left(\begin{litmat}
        B & \frac\tau\gamma I_t\\
        -E & B^\trsp\\
    \end{litmat} \kron I_x + \tau\begin{litmat}
        I_t \kron K\\-E \kron K^*&I_t \kron K^*\\
    \end{litmat} \right)\begin{litmat}
        \kronv y\\ \kronv\ad\\
    \end{litmat} = \kronv b
\end{equation}
where $E$ is a matrix with as only non-zero a one in the bottom right corner. Recall that the exposition assumes an explicit Euler discreti\sz{}ation, which implies
\begin{equation} \label{eq:pd-tc:new:Bb}
    B = \left[\begin{smallmatrix}
        1 &\\-1 & 1 &\\
        & \ddots & \ddots\\
        && -1 & 1
    \end{smallmatrix}\right] \quad \text{and} \quad \kronv b = \begin{litmat}\mv{y^\trsp_\mathrm{init}} & 0 & \ldots & 0 & -((I_x+\tau K^*)\mv{y_\mathrm{target}})^\trsp\end{litmat}^\trsp    \mper
\end{equation}
In contrast to the tracking situation, the discreti\sz{}ation point at time $t=T$ cannot be eliminated due to the more complex terminal condition in \cref{eq:intro:intro:optsys}. Thus $B$ is $\Tidx\times\Tidx$.

ParaDiag methods are fully reliant on the presence of good preconditioners, preferably with a mesh-independent convergence rate. Such a preconditioner must be invertible efficiently and in parallel. Leaving the bottom-left block of \cref{eq:pd-tc:new:aao} out of the preconditioner makes this task significantly easier. Indeed, it allows replacing the $B$ blocks by alpha-circulant $C(\alpha)$ blocks to form the preconditioner
\begin{equation} \label{eq:pd-tc:new:Palpha}
    \kronm P(\alpha) = \begin{litmat}
        C(\alpha) & \frac\tau\gamma I_t\\
        &C(\alpha)^*\\
    \end{litmat} \kron I_x + \tau\begin{litmat}
        I_t\kron K\\&I_t\kron K^*\\
    \end{litmat}    \mcom
\end{equation}
which is block-triangular. Thus multiplication by $\kronm P(\alpha)^{-1}$ is possible by first inverting the bottom-right block of \cref{eq:pd-tc:new:Palpha} (which pertains to the adjoint variable $\lambda$) and only then solving a second system to find the state $y$. Due to this procedure, $C(\alpha)$ and $C(\alpha)^*$ no longer need to be simultaneously diagonali\sz{}able, and $\abs\alpha$ can be smaller than $1$, in contrast to the tracking method. \Cref{alg:pd-tc:new} spells out how to solve \cref{eq:pd-tc:new:aao} using the ParaDiag method this subsection proposes.

\begin{algorithm}
    \caption{ParaDiag procedure for solving the terminal-cost problem \cref{eq:pd-tc:new:aao}} \label{alg:pd-tc:new}
    \begin{tabular}{rl}
        \textbf{Input:}  &Vector $\kronv b$ defined by \cref{eq:pd-tc:new:Bb}\\
                            &\emph{Arbitrary} matrix $K$ characterising the problem by \cref{eq:intro:intro:optsys}\\
                            &Matrix $D(\alpha)$ following from the time discreti\sz{}ation by \cref{eq:pd-track:alpha:Calpha-fact} ($\alpha\ne0$)\\
        \textbf{Output:} &The vectors $\kronv y$ and $\kronv\ad$ that solve \cref{eq:pd-tc:new:aao}\\
    \end{tabular}
    \begin{algorithmic}[1]
        \State Solve \cref{eq:pd-tc:new:aao} for $\kronv y$ and $\kronv\ad$ using an iterative method, with preconditioner $\kronm P(\alpha)$ from \cref{eq:pd-tc:new:Palpha}. When asked to compute $\bigl[\begin{smallmatrix}\kronv x\\ \kronv z\end{smallmatrix}\bigr] = \kronm P(\alpha)^{-1}\bigl[\begin{smallmatrix}\kronv v\\ \kronv w\end{smallmatrix}\bigr]$:
            \LineCommentWithSkip Phase 1: invert the bottom-right block
            \State \hskip 1cm Calculate $((\mv{s_{1,1}})^\trsp, \ldots, (\mv{s_{1,\Tidx}})^\trsp)^\trsp \coloneqq (\dfm \Gamma_\alpha^{-*} \kron I_x)\kronv{w}$ with the (parallel) \textsc{fft}.
            \State \hskip 1cm For $\tidx=\{1, \ldots, \Tidx\}$, solve (in parallel)
            \begin{equation}
                \mv{s_{2,\tidx}} \coloneqq (d_{\tidx,\tidx}(\alpha)^*I_x + \tau K^*)^{-1}\mv{s_{1,\tidx}}
            \end{equation}
            \hskip 1cmand assemble $\kronv{s_2} \coloneqq ((\mv{s_{2,1}})^\trsp, \ldots, (\mv{s_{2,\Tidx}})^\trsp)^\trsp$.
            \State \hskip 1cm Calculate $\kronv{z} = (\Gamma_\alpha^*\dfm ^* \kron I_x)\kronv{s_2}$ with the (parallel) \textsc{fft}.
            \vspace{.3cm}
            \LineCommentWithSkip Phase 2: invert the rest of the matrix
            \State \hskip 1cm Set $\kronv{r_1} = \kronv v - \frac\tau\gamma\kronv z$.
            \State \hskip 1cm Calculate $((\mv{r_{2,1}})^\trsp, \ldots, (\mv{r_{2,\Tidx}})^\trsp)^\trsp \coloneqq (\dfm \Gamma_\alpha \kron I_x)\kronv{r_1}$ with the (parallel) \textsc{fft}.
            \State \hskip 1cm For $\tidx=\{1, \ldots, \Tidx\}$, solve (in parallel)
            \begin{equation}
                \mv{r_{3,\tidx}} \coloneqq (d_{\tidx,\tidx}(\alpha)I_x + \tau K)^{-1}\mv{r_{2,\tidx}}
            \end{equation}
            \hskip 1cmand assemble $\kronv{r_3} \coloneqq ((\mv{r_{3,1}})^\trsp, \ldots, (\mv{r_{3,\Tidx}})^\trsp)^\trsp$.
            \State \hskip 1cm Calculate $\kronv{x} = (\Gamma_\alpha^{-1}\dfm ^* \kron I_x)\kronv{r_3}$ with the (parallel) \textsc{fft}.
    \end{algorithmic}
\end{algorithm}

    \subsection{Analytic eigenvalue expressions} \label{sec:pd-tc:anal}
    As was the case for tracking, we will formulate analytic eigenvalue results for the special case of a self-adjoint matrix $K=K^*$. The preparatory steps from \cref{sec:pd-track:anal} are straightforward to repeat: we perform the same rescaling, resulting in
\begin{equation} \label{eq:pd-tc:anal:resc}
    \kronm{A_\mathrm p} \coloneqq \left[\begin{smallarray}{cccc|cccc}
        I_x & & & & \Psi\\
        -\Phi & I_x & & & & \Psi\\
        & \ddots & \ddots & & & & \ddots\\
        & & -\Phi & I_x & & & & \Psi\\
        \cmidrule(lr){1-4}\cmidrule(lr){5-8}
         & & & & I_x & -\Phi\\
        & & & & & \ddots & \ddots\\
        & & & & & & I_x & -\Phi\\
        & & & -I_x & & & & I_x\\
    \end{smallarray}\right], \kronm P_\mathrm p(\alpha) \coloneqq\left[\begin{smallarray}{cccc|cccc}
        I_x & & & -\alpha\Phi & \Psi\\
        -\Phi & I_x & & & & \Psi\\
        & \ddots & \ddots & & & & \ddots\\
        & & -\Phi & I_x & & & & \Psi\\
        \cmidrule(lr){1-4}\cmidrule(lr){5-8}
         & & & & I_x & -\Phi\\
        & & & & & \ddots & \ddots\\
        & & & & & & I_x & -\Phi\\
        & & & & -\alpha\Phi & & & I_x\\
    \end{smallarray}\right]
\end{equation}
where $\alpha\in\mathbb R$ was assumed. This time, $\Phi=(I_x+\tau K)^{-1}$ and $\Psi=\frac\tau\gamma(I_x+\tau K)^{-1}$. We again perform a decomposition to the scalar case, such that the eigenvalues of $\kronm P(\alpha)^{-1}\kronm A = \kronm P_\mathrm p(\alpha)^{-1}\kronm A_\mathrm p$ are the union of those of
\begin{equation}
    P_\sigma(\alpha)^{-1}A_\sigma = \left[\begin{smallarray}{cccc|cccc}
        1 & & & -\alpha\varphi & \psi\\
        -\varphi & 1 & & & & \psi\\
        & \ddots & \ddots & & & & \ddots\\
        & & -\varphi & 1 & & & & \psi\\
        \cmidrule(lr){1-4}\cmidrule(lr){5-8}
        & & & & 1 & -\varphi\\
        & & & & & \ddots & \ddots\\
        & & & & & & 1 & -\varphi\\
        & & &  & -\alpha\varphi & & & 1\\
    \end{smallarray}\right]^{-1}\left[\begin{smallarray}{cccc|cccc}
        1 & & & & \psi\\
        -\varphi & 1 & & & & \psi\\
        & \ddots & \ddots & & & & \ddots\\
        & & -\varphi & 1 & & & & \psi\\
        \cmidrule(lr){1-4}\cmidrule(lr){5-8}
        & & & & 1 & -\varphi\\
        & & & & & \ddots & \ddots\\
        & & & & & & 1 & -\varphi\\
        & & & -1 & & & & 1\\
    \end{smallarray}\right]    \mper
\end{equation}
for all eigenvalues $\sigma$ of $K$, where $\varphi = (1+\tau\sigma)^{-1}$ and $\psi=\frac\tau\gamma(1+\tau\sigma)^{-1}$. We can eliminate $\tau$ by defining $\widehat\sigma\coloneqq\tau\sigma$ and $\widehat\gamma\coloneqq\frac\tau\gamma$, leading to
\begin{equation} \label{eq:pd-tc:anal:phipsi}
    \varphi = (1 + \widehat\sigma)^{-1} \quad \text{and} \quad \psi = \widehat\gamma(1+\widehat\sigma)^{-1}    \mper
\end{equation}
These equations are identical to \cref{eq:pd-track:anal:phipsi}, but notice that the definition of $\widehat\gamma$ is different.

The problem has been reduced to finding the eigenvalues $\theta$ of
\begin{equation}
    P_\sigma(\alpha)^{-1}A_\sigma = I_t+P_\sigma(\alpha)^{-1}(A_\sigma-P_\sigma(\alpha)) \eqqcolon I_t + P_\sigma(\alpha)^{-1}R_\sigma    \mper
\end{equation}
We get $\theta = 1 + \omega$, where the $\omega$s are eigenvalues of $P_\sigma(\alpha)^{-1}R_\sigma$, studied in \cref{thm:pd-tc:eigs}.

\begin{theorem} \label{thm:pd-tc:eigs}
    Let $\Tidx > 3$ and $\alpha,\varphi,\psi\in\mathbb R$ with $\alpha\ne0$. The $2\Tidx\times2\Tidx$ matrix
    \begin{equation} \label{thm:pd-tc:eigs:M}
        M  = \left[\begin{smallarray}{cccc|cccc}
            1&&&-\alpha\varphi&\psi\\
            -\varphi & 1 &&&&\psi\\
            &\ddots&\ddots&&&&\ddots\\
            &&-\varphi&1&&&&\psi\\
            \cmidrule(lr){1-4}\cmidrule(lr){5-8}&&&&1&-\varphi\\
            &&&&&\ddots&\ddots\\
            &&&&&&1&-\varphi\\
            &&&&-\alpha\varphi&&&1\\
        \end{smallarray}\right]^{-1}\left[\begin{smallarray}{cccc|cccc}
            \phantom{1}&&&\phantom{-}\alpha\varphi&\phantom{\psi}\\
            \phantom{-\varphi} & \phantom{1}&&&&\phantom{\psi}\\
            &\phantom{\ddots}&\phantom{\ddots}&&&&\phantom{\ddots}\\
            &&\phantom{-\varphi}&\phantom{1}&&&&\phantom{\psi}\\
            \cmidrule(lr){1-4}\cmidrule(lr){5-8}
            &&&&\phantom{1}&\phantom{-\varphi}\\
            &&&&&\phantom{\ddots}&\phantom{\ddots}\\
            &&&&&&\phantom{1}&\phantom{-\varphi}\\
            &&&-1&\phantom{-}\alpha\varphi&&&\phantom{1}\\
        \end{smallarray}\right]
    \end{equation}
    where $\varphi\ne\pm1$ has two potentially non-zero eigenvalues $\omega_{\{1,2\}}$: those of the matrix
    \begin{equation} \label{eq:lmm:pd-tc:anal:eigs:Mred}
        M_\mathrm{red} = \begin{litmat}
            \frac{\alpha\varphi^\Tidx}{1-\alpha\varphi^\Tidx}+\frac\psi{(1-\alpha\varphi^\Tidx)^2}\frac{1-\varphi^{2\Tidx}}{1-\varphi^2} & -\frac{\alpha\varphi\psi}{(1-\alpha\varphi^\Tidx)^2}\frac{1-\varphi^{2\Tidx}}{1-\varphi^2}\\
            -\frac{\varphi^{\Tidx-1}}{1-\alpha\varphi^\Tidx} & \frac{\alpha\varphi^\Tidx}{1-\alpha\varphi^\Tidx}\\
        \end{litmat}    \mper
    \end{equation}
    When $\alpha$ goes to zero, this simplifies to
    \begin{equation} \label{eq:lmm:pd-tc:anal:eigs:lim}
        \lim_{\alpha\rightarrow0}\omega_1 = \psi\frac{1-\varphi^{2\Tidx}}{1-\varphi^2} \quad \text{and} \quad \lim_{\alpha\rightarrow0}\omega_2 = 0    \mper
    \end{equation}
\end{theorem}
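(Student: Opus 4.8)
The plan is to exploit two structural facts: that $R_\sigma = A_\sigma - P_\sigma(\alpha)$ is a rank-two perturbation, and that $P_\sigma(\alpha)$ is block upper triangular with alpha-circulant-type blocks whose inverses have closed-form entries. Comparing $A_\sigma$ with $P_\sigma(\alpha)$ entrywise shows that $R_\sigma$ has only three nonzero entries: $(R_\sigma)_{1,\Tidx}=\alpha\varphi$ and $(R_\sigma)_{2\Tidx,\Tidx+1}=\alpha\varphi$ (the two corner terms created by replacing $B$ by $C(\alpha)$) together with $(R_\sigma)_{2\Tidx,\Tidx}=-1$ (the sole nonzero entry of $E$ that the preconditioner drops), all interior contributions cancelling. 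Hence $R_\sigma=U\widehat W^{\trsp}$ with $U=[\,\mv e_1\ \ \mv e_{2\Tidx}\,]$ and $\widehat W=[\,\alpha\varphi\,\mv e_\Tidx\ \ (-\mv e_\Tidx+\alpha\varphi\,\mv e_{\Tidx+1})\,]$, both in $\mathbb R^{2\Tidx\times2}$, where $\mv e_j$ is the $j$-th canonical basis vector.

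Next I would invoke the standard relation between the spectra of $XY$ and $YX$: with $X=P_\sigma^{-1}(\alpha)U$ and $Y=\widehat W^{\trsp}$ one has $\det(\lambda I_{2\Tidx}-P_\sigma^{-1}(\alpha)U\widehat W^{\trsp})=\lambda^{2\Tidx-2}\det(\lambda I_2-\widehat W^{\trsp}P_\sigma^{-1}(\alpha)U)$. This immediately yields that $M$ has at most two nonzero eigenvalues and that they coincide, with multiplicity, with those of the $2\times2$ matrix $\widehat W^{\trsp}P_\sigma^{-1}(\alpha)U$. So the whole problem is reduced to evaluating that $2\times2$ matrix, which only requires the four scalars $(P_\sigma^{-1}(\alpha))_{\Tidx,1}$, $(P_\sigma^{-1}(\alpha))_{\Tidx,2\Tidx}$, $(P_\sigma^{-1}(\alpha))_{\Tidx+1,1}$ and $(P_\sigma^{-1}(\alpha))_{\Tidx+1,2\Tidx}$.

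Here the block-triangular structure is decisive. Writing $P_\sigma(\alpha)=\bigl[\begin{smallmatrix}G&\psi I\\0&G^{\trsp}\end{smallmatrix}\bigr]$ with $G=I-\varphi Z_\alpha$, where $Z_\alpha$ is the $\alpha$-cyclic down-shift satisfying $Z_\alpha^{\Tidx}=\alpha I$, gives $P_\sigma^{-1}(\alpha)=\bigl[\begin{smallmatrix}G^{-1}&-\psi G^{-1}G^{-\trsp}\\0&G^{-\trsp}\end{smallmatrix}\bigr]$. From the telescoping Neumann series $G^{-1}=(1-\alpha\varphi^{\Tidx})^{-1}\sum_{k=0}^{\Tidx-1}\varphi^k Z_\alpha^k$ I would read off $(G^{-1})_{\Tidx,p}=\varphi^{\Tidx-p}/(1-\alpha\varphi^{\Tidx})$, and then $(G^{-1}G^{-\trsp})_{\Tidx,\Tidx}=\sum_{p=1}^{\Tidx}(G^{-1})_{\Tidx,p}^2=(1-\alpha\varphi^{\Tidx})^{-2}(1-\varphi^{2\Tidx})/(1-\varphi^2)$, the last equality using $\varphi\ne\pm1$ to collapse the geometric series; moreover $(P_\sigma^{-1}(\alpha))_{\Tidx+1,1}=0$ since the lower-left block vanishes, and $(P_\sigma^{-1}(\alpha))_{\Tidx+1,2\Tidx}=(G^{-\trsp})_{1,\Tidx}=(G^{-1})_{\Tidx,1}=\varphi^{\Tidx-1}/(1-\alpha\varphi^{\Tidx})$. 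Substituting into $\widehat W^{\trsp}P_\sigma^{-1}(\alpha)U$ and simplifying produces a $2\times2$ matrix which is exactly $M_\mathrm{red}$ from \cref{eq:lmm:pd-tc:anal:eigs:Mred} with its two diagonal entries interchanged; since the characteristic polynomial of a $2\times2$ matrix is determined by its trace and determinant, its eigenvalues are precisely those of $M_\mathrm{red}$, which proves the main claim. For \cref{eq:lmm:pd-tc:anal:eigs:lim} I would simply let $\alpha\to0$ in $M_\mathrm{red}$: the off-diagonal entry $-\alpha\varphi\psi(\cdots)$ and the terms $\alpha\varphi^{\Tidx}/(1-\alpha\varphi^{\Tidx})$ vanish, leaving a lower-triangular matrix with diagonal $\bigl(\psi(1-\varphi^{2\Tidx})/(1-\varphi^2),\,0\bigr)$.

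The step I expect to require the most care is the third one: correctly tracking which power $Z_\alpha^k$ contributes to a prescribed entry of $G^{-1}$ (and whether that power "wraps around", picking up the factor $\alpha$), and then condensing the resulting rational expressions into exactly the displayed form of $M_\mathrm{red}$. The surrounding pieces — the rank count of $R_\sigma$, the $XY$/$YX$ identity, the block-triangular inversion, and the $\alpha\to0$ limit — are short and essentially mechanical.
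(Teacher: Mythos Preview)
Your proof is correct and follows the same overall architecture as the paper: use the block upper-triangular form of $P_\sigma(\alpha)$, identify that only the two scalars $(C^{-1}(\alpha))_{\Tidx,1}$ and $(C^{-1}(\alpha)C^{-\trsp}(\alpha))_{\Tidx,\Tidx}$ are needed, compute them, and read off the $\alpha\to0$ limit. There are, however, two genuine differences worth noting.

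First, the reduction to a $2\times2$ problem. The paper multiplies out $P_\sigma^{-1}(\alpha)R_\sigma$, observes that the product has only two non-zero \emph{columns} (namely $\Tidx$ and $\Tidx+1$), and takes $M_{\mathrm{red}}$ to be the $2\times2$ diagonal block sitting in those columns. You instead factor $R_\sigma=U\widehat W^{\trsp}$ and apply the $XY/YX$ spectral identity, yielding $\widehat W^{\trsp}P_\sigma^{-1}(\alpha)U$. This is why your $2\times2$ matrix comes out with the diagonal entries swapped relative to the paper's $M_{\mathrm{red}}$; your trace--determinant remark then closes the gap cleanly. Both routes are standard and equally short.

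Second, the inversion of $C(\alpha)$. The paper takes a more roundabout path: it relates the $\alpha$-circulant $C(\alpha)$ to a genuine circulant $\widehat C$ via the scaling $\Gamma_\alpha$, then invokes an explicit inversion formula from the literature for $\widehat C^{-1}$. Your telescoping identity $(I-\varphi Z_\alpha)\sum_{k=0}^{\Tidx-1}\varphi^kZ_\alpha^k=(1-\alpha\varphi^{\Tidx})I$, which uses only $Z_\alpha^{\Tidx}=\alpha I$, is more elementary and self-contained, and it delivers the needed last-row entries of $C^{-1}(\alpha)$ directly. This is a nicer argument than the paper's for this particular step.
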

\begin{proof}
    The proof is given in \cref{sec:apdx-pdtce}.
\end{proof}

\begin{corollary}
    Consider a terminal-cost all-at-once system with implicit-Euler time discreti\sz{}ation \cref{eq:pd-tc:new:aao} where $\Tidx>3$ and $K$ is self-adjoint with eigenvalues $\left\{\sigma_\sidx\right\}_{\sidx=1}^{\Sidx}$. When using ParaDiag with preconditioner $\kronm P(\alpha)$ (see \cref{eq:pd-tc:new:Palpha}), the eigenvalues of the preconditioned system matrix are all either unity or equal to
    \begin{equation}
        \theta_{\sidx,\{1,2\}} = 1 + \omega_{\sidx,\{1,2\}}
    \end{equation}
    where $\omega_{\sidx,\{1,2\}}$ are given as the eigenvalues of \cref{eq:lmm:pd-tc:anal:eigs:Mred}, having filled in $\varphi = (1+\tau\sigma_\sidx)^{-1}$ and $\psi = \frac\tau\gamma(1+\tau\sigma_\sidx)^{-1}$ (on the condition that $\varphi\ne\pm1$).
\end{corollary}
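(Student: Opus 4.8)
The plan is to read this corollary off \cref{thm:pd-tc:eigs} via the scalar reduction already set up in \cref{sec:pd-tc:anal}, so the argument is short. First I would recall that the rescaling carried out just before \cref{eq:pd-tc:anal:resc} left-multiplies \cref{eq:pd-tc:new:aao} by $I\kron(b_{0,0}I+\tau K)^{-1}$ and hence does not alter the spectrum of the preconditioned matrix: $\kronm P^{-1}(\alpha)\kronm A=\kronm P_\mathrm p^{-1}(\alpha)\kronm A_\mathrm p$. Every $\Sidx\times\Sidx$ block of $\kronm A_\mathrm p$ and $\kronm P_\mathrm p(\alpha)$ in \cref{eq:pd-tc:anal:resc} is one of $I$, $-\Phi$, $\Psi$, $-I$ and $-\alpha\Phi$, all rational functions of $K$. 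Writing $K=V^{-1}\Sigma V$ with $\Sigma$ diagonal -- possible because $K$ is self-adjoint, which moreover makes $\Sigma$ real -- $\Phi$ and $\Psi$ are diagonalized by the same $V$; conjugating $\kronm P_\mathrm p^{-1}(\alpha)\kronm A_\mathrm p$ by $I_{2\Tidx}\kron V$ and then permuting rows and columns to group the indices sharing a common eigenvalue $\sigma$ of $K$ exhibits it as block diagonal, with one $2\Tidx\times2\Tidx$ block $P_\sigma^{-1}(\alpha)A_\sigma$ per eigenvalue $\sigma$. So the spectrum of $\kronm P^{-1}(\alpha)\kronm A$ is the union over $\{\sigma_\sidx\}_{\sidx=1}^{\Sidx}$ of the spectra of the scalar blocks $P_{\sigma_\sidx}^{-1}(\alpha)A_{\sigma_\sidx}$, exactly as asserted in \cref{sec:pd-tc:anal}.

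Next, for a fixed $\sigma_\sidx$ I would use the identity-plus-low-rank splitting $P_{\sigma_\sidx}^{-1}(\alpha)A_{\sigma_\sidx}=I+P_{\sigma_\sidx}^{-1}(\alpha)R_{\sigma_\sidx}$, so that its eigenvalues are $1+\omega$ as $\omega$ runs over the eigenvalues of $M\coloneqq P_{\sigma_\sidx}^{-1}(\alpha)R_{\sigma_\sidx}$, and then check the hypotheses of \cref{thm:pd-tc:eigs} for this $M$: $\Tidx>3$ is assumed; $\alpha\ne0$ because $C(\alpha)$ is by construction an alpha-circulant; and self-adjointness of $K$ forces $\sigma_\sidx\in\mathbb R$, so by \cref{eq:pd-tc:anal:phipsi} both $\varphi=(1+\tau\sigma_\sidx)^{-1}$ and $\psi=\frac\tau\gamma(1+\tau\sigma_\sidx)^{-1}$ are real. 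Under the stated proviso $\varphi\ne\pm1$, \cref{thm:pd-tc:eigs} then gives that $M$ has at most two nonzero eigenvalues, namely the eigenvalues $\omega_{\sidx,\{1,2\}}$ of the $2\times2$ matrix $M_\mathrm{red}$ of \cref{eq:lmm:pd-tc:anal:eigs:Mred} with these $\varphi,\psi$ substituted, the remaining $2\Tidx-2$ being zero. Adding $I$ sends those zeros to $1$ and sends $\omega_{\sidx,\{1,2\}}$ to $\theta_{\sidx,\{1,2\}}=1+\omega_{\sidx,\{1,2\}}$, and taking the union over $\sidx$ finishes the proof.

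I do not expect a genuine obstacle here, since all the analytic work is already carried by \cref{thm:pd-tc:eigs}; the corollary just has to (i) invoke the Kronecker/permutation similarity that realizes the scalar decomposition -- used implicitly already when \cref{sec:pd-tc:anal} writes the eigenvalues as a union -- and (ii) note that $K=K^*$ is exactly what guarantees the realness of $\varphi$ and $\psi$ required to apply the theorem, the condition $\varphi\ne\pm1$ being simply inherited. If anything is mildly delicate it is keeping multiplicities straight (an eigenvalue $\sigma_\sidx$ of multiplicity $\mu_\sidx$ contributes each $\theta_{\sidx,\{1,2\}}$ with multiplicity $\mu_\sidx$ and contributes the value $1$ with multiplicity $\mu_\sidx(2\Tidx-2)$), but this is irrelevant to the statement, which only asserts that each eigenvalue lies in $\{1\}\cup\{\theta_{\sidx,\{1,2\}}:\sidx=1,\ldots,\Sidx\}$.
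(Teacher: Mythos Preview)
Your proposal is correct and follows exactly the route the paper takes: the corollary is stated without an explicit proof because all the ingredients are already laid out in \cref{sec:pd-tc:anal} (the rescaling, the diagonalisation of $K$ yielding the scalar decomposition, and the identity-plus-low-rank splitting), with \cref{thm:pd-tc:eigs} supplying the eigenvalues of each scalar block. Your write-up simply makes explicit what the paper leaves implicit; the only hypothesis you pass over lightly is that $\alpha\in\mathbb R$, but the paper already assumes this at the start of \cref{sec:pd-tc:anal}.
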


    \subsection{Interpreting the eigenvalue results} \label{sec:pd-tc:interp}
    In practice, $\alpha$ in \cref{eq:pd-tc:new:Palpha} can usually be taken small enough such that the limit \cref{eq:lmm:pd-tc:anal:eigs:lim} is valid (the only constraint is rounding errors occurring for very small $\alpha$ \cite{ganderDirectTimeParallel2019a,wuParallelCoarseGrid2018a}). Given a rescaled eigenvalue $\widehat\sigma$ of $K$, \cref{fig:pd-tc:eigs} plots the non-unity preconditioned eigenvalue $\theta$ in this limit.
\begin{figure}
    \centering
    \includegraphics[width=.4\textwidth]{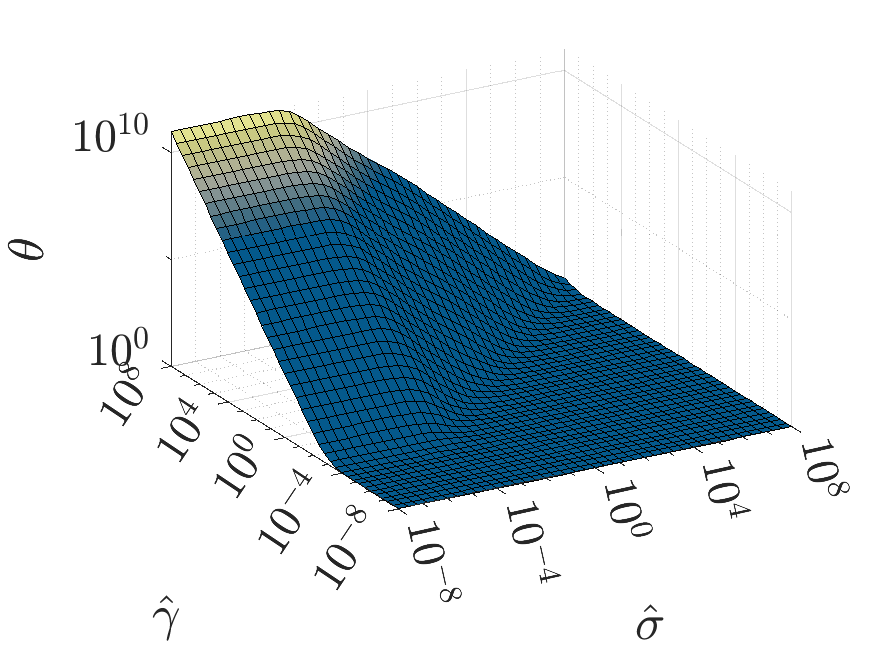}
    \caption{
        \vspace{-1cm}
        The non-unity preconditioned eigenvalues $\theta$ of $P(0)^{-1}A$ with $\Tidx=1000$
        \vspace{.5cm}
    }
    \label{fig:pd-tc:eigs}
\end{figure}
Based on this figure, \emph{for a fixed number of time steps}, poor convergence is expected when both \begin{itemize}
    \item the equation in the absence of control evolves slowly \emph{relatively to the size of the time interval} ($\widehat\sigma\approx0$); and
    \item there is a lot of control \emph{relatively to the size of the time interval} ($\widehat\gamma\gg0$).
\end{itemize}
The first condition is the same as for the tracking preconditioner, but the second is different. Interestingly, the ParaOpt algorithm \cite{ganderPARAOPTPararealAlgorithm2020a}, which also treats terminal-cost objectives, struggles in the high-$\widehat\gamma$ regime as well.

\section{Parallel-scaling analysis for self-adjoint problems} \label{sec:scale}
    An oft-used metric in the context of parallel algorithms is \emph{weak scalability} (for time-parallel methods, it was studied in e.g.\ \cite{benedusiExperimentalComparisonSpacetime2021,caceressilvaParallelintimePararealImplementation2014}). The aim is that a program's execution time stays constant when increasing the problem size (in our case the number of time steps $L$, as we investigate \emph{time}-parallelism) in tandem with the number of processors, keeping their ratio constant. For our optimal-control problem \cref{eq:intro:intro:optprob}, we identify two regimes \cite{ganderPARAOPTPararealAlgorithm2020a}. \begin{itemize}
    \item If we increase the time horizon $T$ together with $L$, the time step $\tau$ stays constant. In this regime, $\widehat\sigma$ and $\widehat\gamma$ (from \cref{eq:pd-track:anal:phipsi} or \cref{eq:pd-tc:anal:phipsi}, depending on the objective function) do not change. The amount of work is increased by an expanding time scope, not by using a more accurate discreti\sz{}ation.
    \item We can also keep $T$ constant but instead increase the amount of time steps $L$ by lowering $\tau$. Then $\widehat\sigma$ and $\widehat\gamma$ increase with it. The amount of work is increased by using a more fine-grained mesh for the same problem.
\end{itemize}
This section will use the analytic results from \cref{sec:pd-track:anal,sec:pd-tc:anal} to perform a theoretical analysis of ParaDiag's weak scaling, which \cref{sec:num} later verifies in practice. The approach is to assume the inversion of our preconditioners scales well in all regimes, as attested to by previous ParaDiag algorithms that use similar preconditioners \cite{goddardNoteParallelPreconditioning2019,wuDiagonalizationbasedParallelintimeAlgorithms2020b,wuParallelInTimeBlockCirculantPreconditioner2020a}. Then, all that needs to be analy\sz{}ed is the number of such inversions: if the iterative solver's iteration count stays constant when increasing the problem size, we have achieved good weak scalability. As a proxy for the actual iteration count, we will use the distribution of the preconditioned eigenvalues -- if they converge when increasing time parallelism, we will assume for the iteration count to do the same.

This section is limited to implicit Euler and self-adjoint, dissipative equations. We thus have $\sigma>0$ and the obvious $\gamma, T > 0$. We aim to show that each of the eigenvalues $\theta$ converges to some finite, non-zero value in the relevant scaling limit.

    \subsection{Increasing the time horizon}
    \begin{figure}
    \centering
    \begin{subfigure}[b]{.3\textwidth}
        \includegraphics[width=\textwidth]{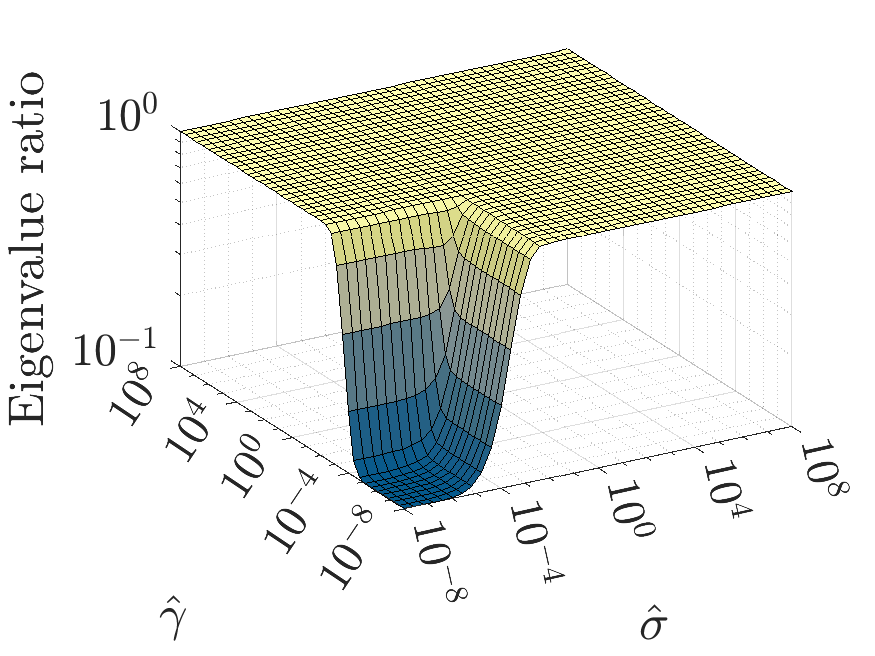}
        \caption{
            Tracking ($\alpha=1$)
            \vspace{-.4cm}
        }
        \label{fig:scale:hor:tr-pT}
    \end{subfigure}
    \hfill
    \begin{subfigure}[b]{.3\textwidth}
        \includegraphics[width=\textwidth]{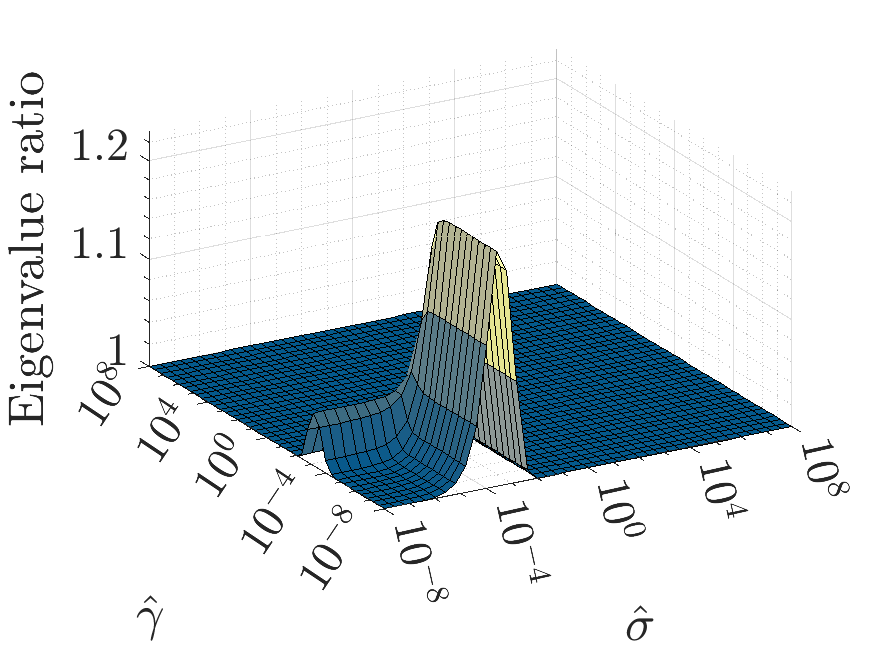}
        \caption{
            Tracking ($\alpha=-1$)
            \vspace{-.4cm}
        }
        \label{fig:scale:hor:tr-mT}
    \end{subfigure}
    \hfill
    \begin{subfigure}[b]{.3\textwidth}
        \includegraphics[width=\textwidth]{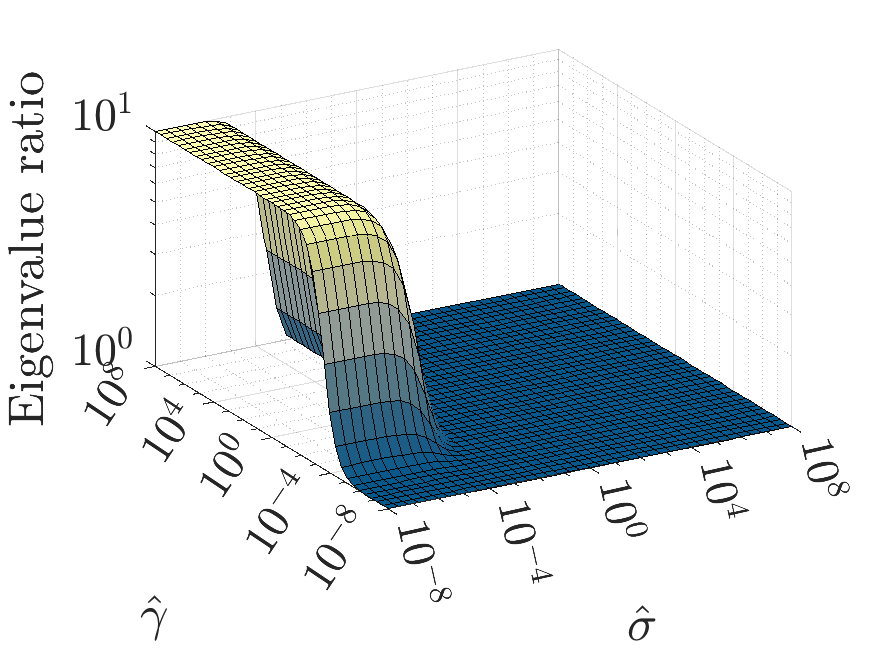}
        \caption{
            Terminal cost ($\alpha\rightarrow0$)
            \vspace{-.4cm}
        }
        \label{fig:scale:hor:tc-T}
    \end{subfigure}
    \caption{
        Ratio $\abs{\theta(\Tidx=10^4)}/\abs{\theta(\Tidx=10^3)}$ of the preconditioned-eigenvalue magnitudes when scaling $\Tidx$ from $10^3$ to $10^4$ through $T$, for different preconditioners
    }
    \label{fig:scale:hor}
    \vspace{-.4cm}
\end{figure}

Increasing $T$ while keeping $\tau$ constant does not affect $\widehat\sigma$ or $\widehat\gamma$. As a result, the only change in \cref{eq:thm:pd-track:eigs:eigs,eq:lmm:pd-tc:anal:eigs:lim} is that of $\widehat\Tidx = \Tidx-1$.

\paragraph*{Tracking}
In \cref{eq:thm:pd-track:eigs:eigs}, we have $0 < z_2 < 1 < z_1$ (see \cref{lmm:apdx-proof:pd-track:zs}(a)). As a result, in the limit for large $T$, $z_1^{\widehat\Tidx} \rightarrow \infty$ and $z_2^{\widehat\Tidx} \rightarrow 0$. That means that, for both $\alpha=\pm1$, the eigenvalues of the preconditioned matrix converge to
\begin{equation}
    \lim_{\Tidx\rightarrow\infty,T\rightarrow\infty,T=\tau\Tidx}\theta_{\{1,2\}} = 1 + \frac1{z_2-z_1}(-(z_2-\varphi\pm\psi\iu)) = \frac{z_1-\varphi\pm\psi\iu}{z_1-z_2}    \mper
\end{equation}
This is finite and non-zero; as assumed in the intro to \cref{sec:scale}, weak scalability can be expected. \Cref{fig:scale:hor:tr-pT,fig:scale:hor:tr-mT} start from finite $\Tidx=10^3$ and show that $\abs\theta$ does not increase significantly when scaling $\Tidx$ to $10^4$. It even decreases when $\widehat\sigma,\widehat\gamma\gtrsim 0$, where \cref{fig:pd-track:plane:p1} shows that $\abs\theta$ is high to start with.\clearpage

\paragraph*{Terminal cost}
Something very similar occurs in \cref{eq:lmm:pd-tc:anal:eigs:lim}. From $0<\varphi<1$, it follows that $\varphi^{2\Tidx}\rightarrow0$ and the non-zero eigenvalue $\theta_1$ approaches 
\begin{equation}
    \lim_{\Tidx\rightarrow\infty,T\rightarrow\infty,T=\tau\Tidx,\alpha\rightarrow0}\theta_1 = \psi/(1-\varphi^2)    \mcom
\end{equation}
which is finite and non-zero since $\sigma,\gamma,T>0$. In the limit $\Tidx\rightarrow\infty$, weak scalability is expected. \Cref{fig:scale:hor:tc-T} confirms that $\theta$ scales well even when $\Tidx$ is finite, except for very low $\widehat\sigma$ values, where the asymptotic region is not yet reached.

    \subsection{Decreasing the time step}
    \begin{figure}
    \centering
    \begin{subfigure}[b]{.3\textwidth}
        \includegraphics[width=\textwidth]{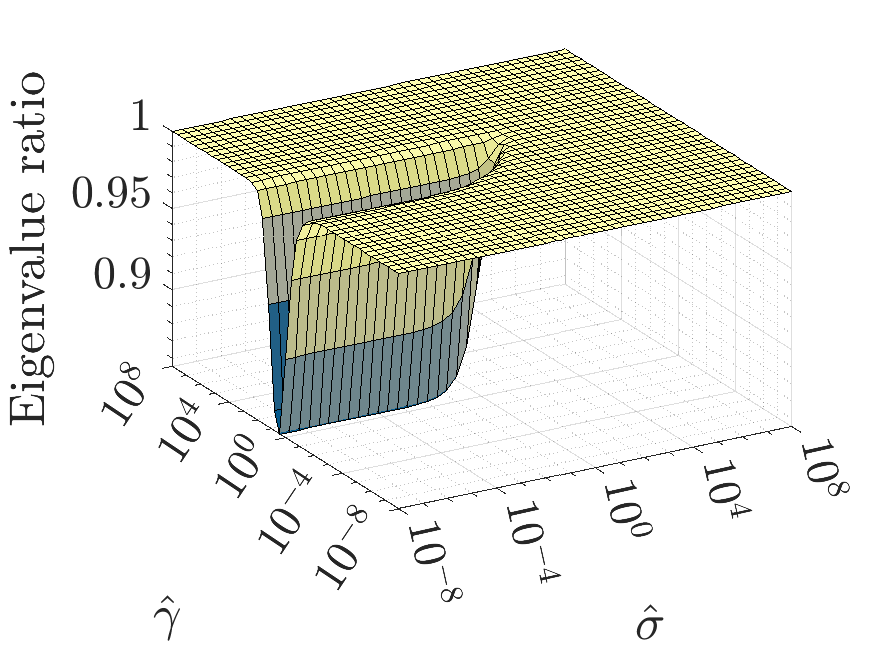}
        \caption{
            Tracking ($\alpha=1$)
            \vspace{-.3cm}
        }
        \label{fig:scale:tau:tr-pDT}
    \end{subfigure}
    \hfill
    \begin{subfigure}[b]{.3\textwidth}
        \includegraphics[width=\textwidth]{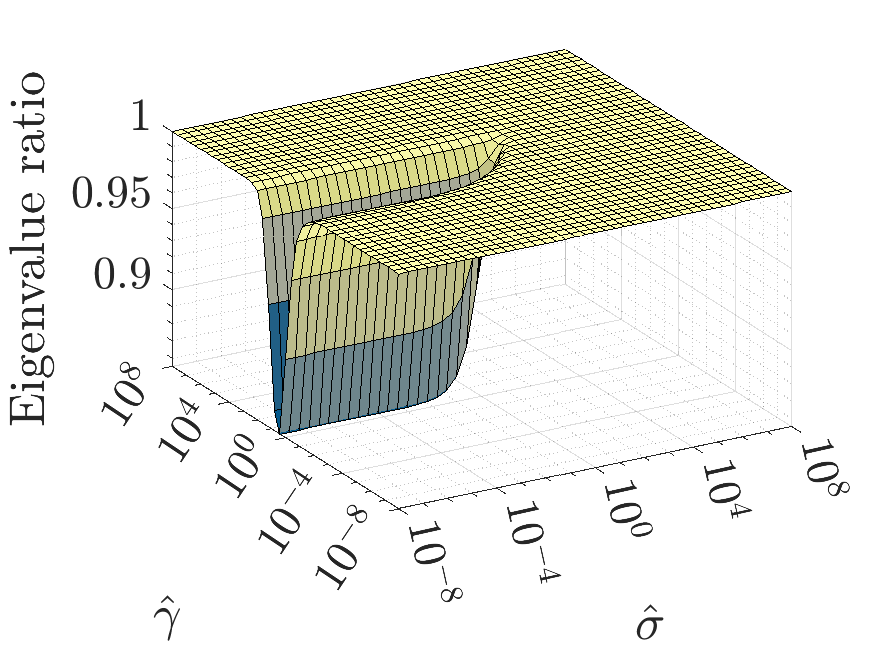}
        \caption{
            Tracking ($\alpha=-1$)
            \vspace{-.3cm}
        }
        \label{fig:scale:tau:tr-mDT}
    \end{subfigure}
    \hfill
    \begin{subfigure}[b]{.3\textwidth}
        \includegraphics[width=\textwidth]{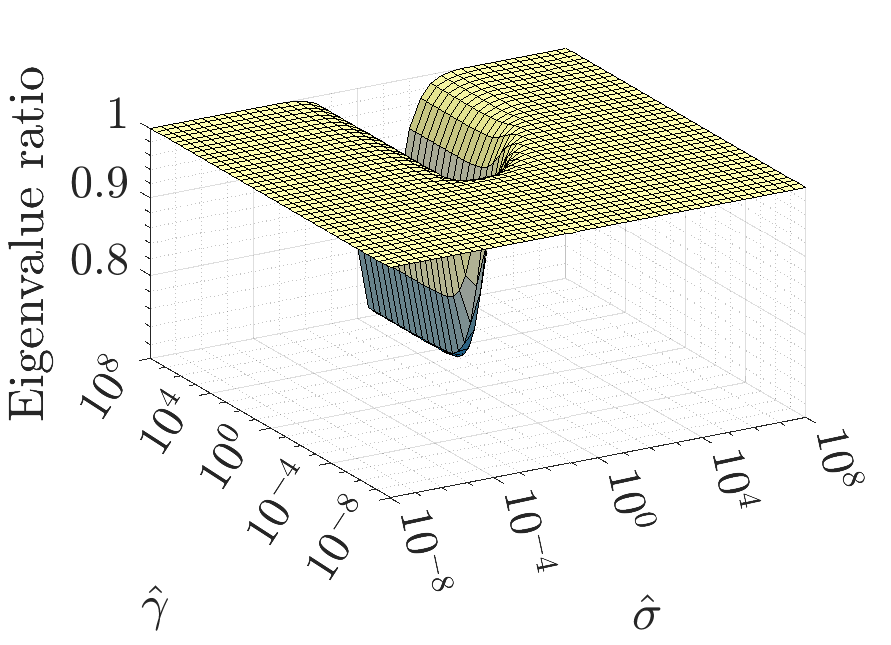}
        \caption{
            Terminal cost ($\alpha\rightarrow0$)
            \vspace{-.3cm}
        }
        \label{fig:scale:tau:tc-DT}
    \end{subfigure}
    \caption{
        Ratio $\abs{\theta(\Tidx=10^4)}/\abs{\theta(\Tidx=10^3)}$ of the preconditioned-eigenvalue magnitudes when scaling $\Tidx$ from $10^3$ to $10^4$ through $\tau$, for different preconditioners
    }
    \label{fig:scale:tau}
    \vspace{-.8cm}
\end{figure}

Keeping $T$ constant and scaling $\tau$ instead slightly complicates matters, as it changes not only $\Tidx$ but also $\widehat\sigma$ and $\widehat\gamma$.

\paragraph*{Tracking}
Using \textsc{Matlab}'s symbolic toolbox allows us to solve the limit
\begin{equation}
    \lim_{\Tidx\rightarrow\infty,\tau\rightarrow0,\tau\Tidx=T,\alpha=\pm1}\theta_{\{1,2\}} = 
        \frac12 + \frac{\tanh\bigl(\frac{T\sqrt{\gamma\sigma^2 + 1}}{2\sqrt\gamma}\bigr)^{-\alpha}(\sqrt\gamma\sigma \pm \iu)}{2\sqrt{\gamma\sigma^2 + 1}}    \mper
\end{equation}
This is a finite expression for both $\alpha=\pm1$ (the denominator cannot reach zero) and is non-zero as well (the real part of the numerator is always positive). \Cref{fig:scale:tau:tr-pDT,fig:scale:tau:tr-mDT} show that the eigenvalues stay almost constant when scaling a finite $\Tidx$ from $10^3$ to $10^4$, which we assumed implies weak scalability.

\paragraph*{Terminal cost}
This case is slightly simpler and can be computed by hand.
\begin{equation}
    \lim_{\Tidx\rightarrow\infty,\tau\rightarrow0,\tau\Tidx=T,\alpha\rightarrow0}\theta_1 = 1 + (1-\exp(-2\sigma T))/(\gamma\sigma)    \mcom
\end{equation}
which is again finite. It also cannot reach zero: the exponential has a negative argument (because $\sigma>0$), so both terms of the sum are positive. \Cref{fig:scale:tau:tc-DT} illustrates that the scaling translates well to finite $\Tidx$ values, again implying weak scalability.

\section{Numerical results} \label{sec:num}
    This section presents the results of numerical tests assessing the performance of our ParaDiag methods. \Cref{sec:num:track} first discusses tracking ParaDiag (\cref{alg:pd-track:new}), considering both $\alpha=1$ and the novel $\alpha=-1$ variant. \Cref{sec:num:tc} then moves on to the new terminal-cost method (\cref{alg:pd-tc:new}).

Our ParaDiag algorithms are tested with a \textsc{Matlab} code we call \texttt{pintopt}. Coding an efficient parallel ParaDiag implementation is a significant task \cite{caklovicParallelintimeCollocationMethod2023b} and is not the focus of the current paper. Hence, \texttt{pintopt} is sequential and not optimi\sz{}ed for speed, but rather serves as a readable and well-documented reference implementation that can be used to study iteration counts. The code is publicly available\footnote{The version of the \texttt{pintopt} \textsc{Matlab} package used here and code to reproduce our results are located at \url{https://gitlab.kuleuven.be/numa/public/pintopt}. New additions and bugfixes are tracked at \url{https://github.com/ArneBouillon/pintopt}.}.

All results use \textsc{gmres} as the iterative solver and are displayed in tables detailing the iteration counts for different parameter configurations. In each table, the rows investigate weak scaling, while the columns vary a different parameter such as the end time $T$ or the regulari\sz{}ation parameter $\gamma$. The tables at the left perform scaling of $\Tidx$ by increasing $T$ -- those on the right by decreasing $\tau$.

As a base problem, we study a parabolic diffusion equation, which is self-adjoint and dissipative such that our theoretical results are directly applicable. The problem involves a heat equation in two dimensions on the spatial domain $\Omega=[0,1]^2$. It reads
\begin{equation} \label{eq:num:intro:diff}
    \partial_t y = \Delta y + u
\end{equation}
with periodic boundary conditions and, in the case of tracking, a target trajectory
\begin{equation} \label{eq:num:intro:yd}
    y_\mathrm d(t, x) = \Bigl(\Bigl(12\pi^2+\frac1{12\pi^2\gamma}\Bigr)(t-T)-\Bigl(1+\frac1{(12\pi^2)^2\gamma}\Bigr)\Bigr)\sin(2\pi x_1)\sin(2\pi x_2)
\end{equation}
or, in the case of terminal cost, a target state
\begin{equation} \label{eq:num:intro:ytarget}
    y_\mathrm{target}(x) = \sin(2\pi x_1)\sin(2\pi x_2)    \mper
\end{equation}
This is the two-dimensional version of a problem studied in \cite{gotschelEfficientParallelinTimeMethod2019a}. In contrast to that paper, we use a non-smooth initial condition
\begin{equation} \label{eq:num:intro:yinit-rough}
    y_\mathrm{init}(x) = \frac1{12\pi^2\gamma}(1-T)\mathrm{sign}(\sin(2\pi x_1))\sin^2(2\pi x_2)    \mcom
\end{equation}
shown in Figure \ref{fig:num:intro:yinit-rough}.
\begin{figure}
    \centering
    \includegraphics[width=.5\textwidth]{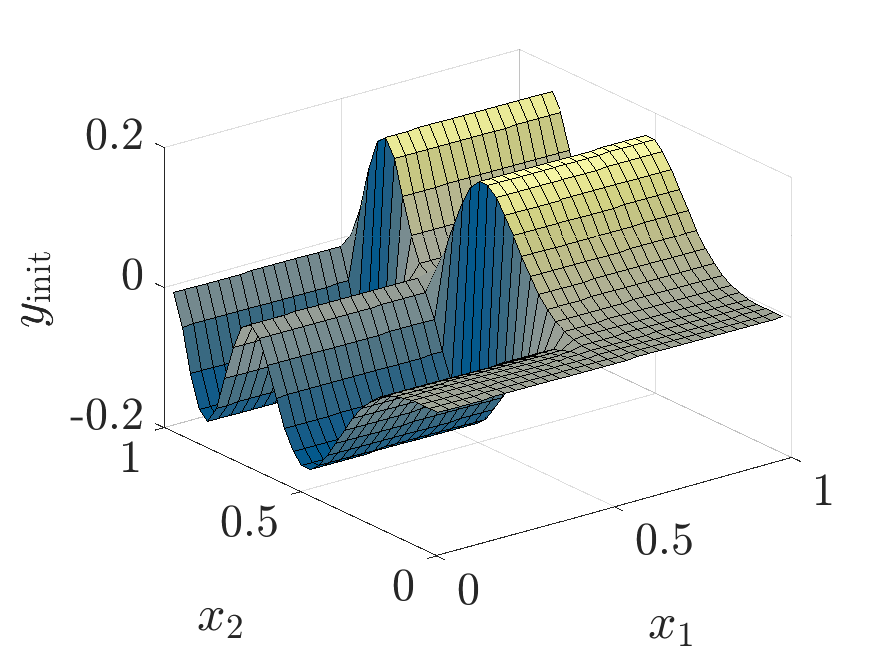}
    \label{fig:pd-track:results:y0:rough}
    \caption{
        Initial condition $y_\mathrm{init}$ from \cref{eq:num:intro:yinit-rough}
    }
    \label{fig:num:intro:yinit-rough}
\end{figure}
The choice for a non-smooth $y_\mathrm{init}$ is important, as a smooth initial condition leads to very fast convergence, as noticed in \cite{goddardNoteParallelPreconditioning2019,wuParallelInTimeBlockCirculantPreconditioner2020a}. We want to test our algorithms with a more challenging, non-smooth case.

Next to the self-adjoint equation \cref{eq:num:intro:diff} covered fully by this paper's analysis, we also consider a non-self-adjoint advection-diffusion equation that our new algorithms can solve, but for which we do not have theoretical results. Extending the previous equation with an advection term, consider
\begin{equation} \label{eq:num:intro:advdiff}
    \partial_t y = d\Delta y - \partial_{x_1}y - \partial_{x_2}y + u
\end{equation}
where $d\in\mathbb R$ controls the amount of diffusion and may vary. For this equation, we use the same $y_\mathrm d$, $y_\mathrm{target}$ and $y_\mathrm{init}$, given in \cref{eq:num:intro:yd,eq:num:intro:ytarget,eq:num:intro:yinit-rough}, as for the diffusion equation. Both \cref{eq:num:intro:diff,eq:num:intro:advdiff} are discreti\sz{}ed with $\Sidx=32\times32$ points in space and all spatial derivatives are discreti\sz{}ed with central differences.

    \subsection{Tracking} \label{sec:num:track}
    \begin{table}
    \footnotesize
    \begin{subtable}{.49\textwidth}
        \begin{tabular}{c|ccccc}
            $\Tidx\backslash T_\mathrm{ref}$ & \texttt{2e0} & \texttt{2e-1} & \texttt{2e-2} & \texttt{2e-3} & \texttt{2e-4}\\\hline
            \csvreader[no head, late after line=\\]
            {table-data/KsT-scalebyT.csv}{}{\if1\thecsvrow30\else\if2\thecsvrow100\else\if3\thecsvrow300\else1000\fi\fi\fi & \csvcoli & \csvcolii & \csvcoliii & \csvcoliv & \csvcolv}
        \end{tabular}
        \caption{Scaling $T$, diffusion\vspace{-.1cm}}
        \label{tab:num:track:a}
    \end{subtable}
    \hfill
    \begin{subtable}{.49\textwidth}
        \begin{tabular}{c|ccccc}
            $\Tidx\backslash T$ & \texttt{2e0} & \texttt{2e-1} & \texttt{2e-2} & \texttt{2e-3} & \texttt{2e-4}\\\hline
            \csvreader[no head, late after line=\\]
            {table-data/KsT-scalebytau.csv}{}{\if1\thecsvrow30\else\if2\thecsvrow100\else\if3\thecsvrow300\else1000\fi\fi\fi & \csvcoli & \csvcolii & \csvcoliii & \csvcoliv & \csvcolv}
        \end{tabular}
        \caption{Scaling $\tau$, diffusion\vspace{-.1cm}}
        \label{tab:num:track:b}
    \end{subtable}
    \vfill
    \begin{subtable}{.49\textwidth}
        \begin{tabular}{c|ccccc}
            $\Tidx\backslash \gamma$ & \texttt{5e-8} & \texttt{5e-5} & \texttt{5e-2} & \texttt{5e1} & \texttt{5e4}\\\hline
            \csvreader[no head, late after line=\\]
            {table-data/Ksg-scalebyT.csv}{}{\if1\thecsvrow30\else\if2\thecsvrow100\else\if3\thecsvrow300\else1000\fi\fi\fi & \csvcoli & \csvcolii & \csvcoliii & \csvcoliv & \csvcolv}
        \end{tabular}
        \caption{Scaling $T$, diffusion\vspace{-.1cm}}
        \label{tab:num:track:c}
    \end{subtable}
    \hfill
    \begin{subtable}{.49\textwidth}
        \begin{tabular}{c|ccccc}
            $\Tidx\backslash \gamma$ & \texttt{5e-8} & \texttt{5e-5} & \texttt{5e-2} & \texttt{5e1} & \texttt{5e4}\\\hline
            \csvreader[no head, late after line=\\]
            {table-data/Ksg-scalebytau.csv}{}{\if1\thecsvrow30\else\if2\thecsvrow100\else\if3\thecsvrow300\else1000\fi\fi\fi & \csvcoli & \csvcolii & \csvcoliii & \csvcoliv & \csvcolv}
        \end{tabular}
        \caption{Scaling $\tau$, diffusion\vspace{-.1cm}}
        \label{tab:num:track:d}
    \end{subtable}
    \vfill
    \begin{subtable}{.49\textwidth}
        \begin{tabular}{c|ccccc}
            $\Tidx\backslash T_\mathrm{ref}$ & \texttt{2e0} & \texttt{2e-1} & \texttt{2e-2} & \texttt{2e-3} & \texttt{2e-4}\\\hline
            \csvreader[no head, late after line=\\]
            {table-data/KsdT-scalebyT.csv}{}{\if1\thecsvrow30\else\if2\thecsvrow100\else\if3\thecsvrow300\else1000\fi\fi\fi & \csvcoli & \csvcolii & \csvcoliii & \csvcoliv & \csvcolv}
        \end{tabular}
        \caption{Scaling $T$, advection-diffusion\vspace{-.1cm}}
        \label{tab:num:track:e}
    \end{subtable}
    \hfill
    \begin{subtable}{.49\textwidth}
        \begin{tabular}{c|ccccc}
            $\Tidx\backslash T$ & \texttt{2e0} & \texttt{2e-1} & \texttt{2e-2} & \texttt{2e-3} & \texttt{2e-4}\\\hline
            \csvreader[no head, late after line=\\]
            {table-data/KsdT-scalebytau.csv}{}{\if1\thecsvrow30\else\if2\thecsvrow100\else\if3\thecsvrow300\else1000\fi\fi\fi & \csvcoli & \csvcolii & \csvcoliii & \csvcoliv & \csvcolv}
        \end{tabular}
        \caption{Scaling $\tau$, advection-diffusion\vspace{-.1cm}}
        \label{tab:num:track:f}
    \end{subtable}
    \vfill
    \begin{subtable}{.49\textwidth}
        \begin{tabular}{c|ccccc}
            $\Tidx\backslash \gamma$ & \texttt{5e-8} & \texttt{5e-5} & \texttt{5e-2} & \texttt{5e1} & \texttt{5e4}\\\hline
            \csvreader[no head, late after line=\\]
            {table-data/Ksdg-scalebyT.csv}{}{\if1\thecsvrow30\else\if2\thecsvrow100\else\if3\thecsvrow300\else1000\fi\fi\fi & \csvcoli & \csvcolii & \csvcoliii & \csvcoliv & \csvcolv}
        \end{tabular}
        \caption{Scaling $T$, advection-diffusion\vspace{-.1cm}}
        \label{tab:num:track:g}
    \end{subtable}
    \hfill
    \begin{subtable}{.49\textwidth}
        \begin{tabular}{c|ccccc}
            $\Tidx\backslash \gamma$ & \texttt{5e-8} & \texttt{5e-5} & \texttt{5e-2} & \texttt{5e1} & \texttt{5e4}\\\hline
            \csvreader[no head, late after line=\\]
            {table-data/Ksdg-scalebytau.csv}{}{\if1\thecsvrow30\else\if2\thecsvrow100\else\if3\thecsvrow300\else1000\fi\fi\fi & \csvcoli & \csvcolii & \csvcoliii & \csvcoliv & \csvcolv}
        \end{tabular}
        \caption{Scaling $\tau$, advection-diffusion\vspace{-.1cm}}
        \label{tab:num:track:h}
    \end{subtable}
    \vfill
    \begin{subtable}{.49\textwidth}
        \begin{tabular}{c|ccccc}
            $\Tidx\backslash d$ & \texttt{1e-3} & \texttt{1e-2} & \texttt{1e-1} & \texttt{1e0} & \texttt{1e1}\\\hline
            \csvreader[no head, late after line=\\]
            {table-data/Ksdd-scalebyT.csv}{}{\if1\thecsvrow30\else\if2\thecsvrow100\else\if3\thecsvrow300\else1000\fi\fi\fi & \csvcoli & \csvcolii & \csvcoliii & \csvcoliv & \csvcolv}
        \end{tabular}
        \caption{Scaling $T$, advection-diffusion\vspace{-.4cm}}
        \label{tab:num:track:i}
    \end{subtable}
    \hfill
    \begin{subtable}{.49\textwidth}
        \begin{tabular}{c|ccccc}
            $\Tidx\backslash d$ & \texttt{1e-3} & \texttt{1e-2} & \texttt{1e-1} & \texttt{1e0} & \texttt{1e1}\\\hline
            \csvreader[no head, late after line=\\]
            {table-data/Ksdd-scalebytau.csv}{}{\if1\thecsvrow30\else\if2\thecsvrow100\else\if3\thecsvrow300\else1000\fi\fi\fi & \csvcoli & \csvcolii & \csvcoliii & \csvcoliv & \csvcolv}
        \end{tabular}
        \caption{Scaling $\tau$, advection-diffusion\vspace{-.4cm}}
        \label{tab:num:track:j}
    \end{subtable}
    \caption{\textsc{gmres} iteration counts ($\alpha=1$/$\alpha=-1$) for tracking ParaDiag applied to the diffusion equation \cref{eq:num:intro:diff} or the advection-diffusion equation \cref{eq:num:intro:advdiff}. The symbol $\varnothing$ indicates a failure to converge within 25 iterations. When one $\alpha$ value outperforms the other, it is bold-faced. By default, $T_\mathrm{ref}=2$, $\gamma=0.05$, and $d=0.1$ when applicable.\vspace{-.5cm}}
    \label{tab:num:track}
\end{table}

The results from applying ParaDiag to the (advection-)diffusion example are listed in \cref{tab:num:track}. The case $\alpha=-1$ outperforms $\alpha=1$ when $T$ is small and does not make much difference otherwise, as observed in \cref{sec:pd-track:alpha}. A wide variety of $\gamma$ values is tested, all resulting in very reasonable iteration counts.

There are two more significant observations. Firstly, the iteration count mostly stays constant when increasing $\Tidx$, which is the weak scalability theori\sz{}ed in \cref{sec:scale}. Secondly, the advection-diffusion case is comparable to the pure diffusion equation, both in scaling and in the effect of $\alpha$. While decreasing the amount of diffusion $d$ increases the iteration count, scaling remains good, as observed in \cref{tab:num:track:j}. This suggests that the conclusions from our self-adjoint study may apply more broadly to non-self-adjoint problems as well. Studying how well optimi\sz{}ation ParaDiag performs for hyperbolic problems such as the pure advection case is left as future work. We remark that, for advection-dominated problems, a carefully selected spatial discreti\sz{}ation (potentially using stabili\sz{}ation \cite{guermondStabilizationGalerkinApproximations1999}) is vital for an accurate solution.

    \subsection{Terminal cost} \label{sec:num:tc}
    \begin{table}
    \footnotesize
    \begin{subtable}{.49\textwidth}
        \begin{tabular}{c|ccccc}
            $\Tidx\backslash T_\mathrm{ref}$ & \texttt{2e0} & \texttt{2e-1} & \texttt{2e-2} & \texttt{2e-3} & \texttt{2e-4}\\\hline
            \csvreader[no head, late after line=\\]
            {table-data/KsT-tc-scalebyT.csv}{}{\if1\thecsvrow30\else\if2\thecsvrow100\else\if3\thecsvrow300\else1000\fi\fi\fi & \csvcoli & \csvcolii & \csvcoliii & \csvcoliv & \csvcolv}
        \end{tabular}
        \caption{Scaling $T$, diffusion}
        \label{tab:num:tc:a}
    \end{subtable}
    \hfill
    \begin{subtable}{.49\textwidth}
        \begin{tabular}{c|ccccc}
            $\Tidx\backslash T$ & \texttt{2e0} & \texttt{2e-1} & \texttt{2e-2} & \texttt{2e-3} & \texttt{2e-4}\\\hline
            \csvreader[no head, late after line=\\]
            {table-data/KsT-tc-scalebytau.csv}{}{\if1\thecsvrow30\else\if2\thecsvrow100\else\if3\thecsvrow300\else1000\fi\fi\fi & \csvcoli & \csvcolii & \csvcoliii & \csvcoliv & \csvcolv}
        \end{tabular}
        \caption{Scaling $\tau$, diffusion}
        \label{tab:num:tc:b}
    \end{subtable}
    \vfill
    \begin{subtable}{.49\textwidth}
        \begin{tabular}{c|ccccc}
            $\Tidx\backslash \gamma$ & \texttt{5e-8} & \texttt{5e-5} & \texttt{5e-2} & \texttt{5e1} & \texttt{5e4}\\\hline
            \csvreader[no head, late after line=\\]
            {table-data/Ksg-tc-scalebyT.csv}{}{\if1\thecsvrow30\else\if2\thecsvrow100\else\if3\thecsvrow300\else1000\fi\fi\fi & \csvcoli & \csvcolii & \csvcoliii & \csvcoliv & \csvcolv}
        \end{tabular}
        \caption{Scaling $T$, diffusion}
        \label{tab:num:tc:c}
    \end{subtable}
    \hfill
    \begin{subtable}{.49\textwidth}
        \begin{tabular}{c|ccccc}
            $\Tidx\backslash \gamma$ & \texttt{5e-8} & \texttt{5e-5} & \texttt{5e-2} & \texttt{5e1} & \texttt{5e4}\\\hline
            \csvreader[no head, late after line=\\]
            {table-data/Ksg-tc-scalebytau.csv}{}{\if1\thecsvrow30\else\if2\thecsvrow100\else\if3\thecsvrow300\else1000\fi\fi\fi & \csvcoli & \csvcolii & \csvcoliii & \csvcoliv & \csvcolv}
        \end{tabular}
        \caption{Scaling $\tau$, diffusion}
        \label{tab:num:tc:d}
    \end{subtable}
    \vfill
    \begin{subtable}{.49\textwidth}
        \begin{tabular}{c|ccccc}
            $\Tidx\backslash T_\mathrm{ref}$ & \texttt{2e0} & \texttt{2e-1} & \texttt{2e-2} & \texttt{2e-3} & \texttt{2e-4}\\\hline
            \csvreader[no head, late after line=\\]
            {table-data/KsdT-tc-scalebyT.csv}{}{\if1\thecsvrow30\else\if2\thecsvrow100\else\if3\thecsvrow300\else1000\fi\fi\fi & \csvcoli & \csvcolii & \csvcoliii & \csvcoliv & \csvcolv}
        \end{tabular}
        \caption{Scaling $T$, advection-diffusion}
        \label{tab:num:tc:e}
    \end{subtable}
    \hfill
    \begin{subtable}{.49\textwidth}
        \begin{tabular}{c|ccccc}
            $\Tidx\backslash T$ & \texttt{2e0} & \texttt{2e-1} & \texttt{2e-2} & \texttt{2e-3} & \texttt{2e-4}\\\hline
            \csvreader[no head, late after line=\\]
            {table-data/KsdT-tc-scalebytau.csv}{}{\if1\thecsvrow30\else\if2\thecsvrow100\else\if3\thecsvrow300\else1000\fi\fi\fi & \csvcoli & \csvcolii & \csvcoliii & \csvcoliv & \csvcolv}
        \end{tabular}
        \caption{Scaling $\tau$, advection-diffusion}
        \label{tab:num:tc:f}
    \end{subtable}
    \vfill
    \begin{subtable}{.49\textwidth}
        \begin{tabular}{c|ccccc}
            $\Tidx\backslash \gamma$ & \texttt{5e-8} & \texttt{5e-5} & \texttt{5e-2} & \texttt{5e1} & \texttt{5e4}\\\hline
            \csvreader[no head, late after line=\\]
            {table-data/Ksdg-tc-scalebyT.csv}{}{\if1\thecsvrow30\else\if2\thecsvrow100\else\if3\thecsvrow300\else1000\fi\fi\fi & \csvcoli & \csvcolii & \csvcoliii & \csvcoliv & \csvcolv}
        \end{tabular}
        \caption{Scaling $T$, advection-diffusion}
        \label{tab:num:tc:g}
    \end{subtable}
    \hfill
    \begin{subtable}{.49\textwidth}
        \begin{tabular}{c|ccccc}
            $\Tidx\backslash \gamma$ & \texttt{5e-8} & \texttt{5e-5} & \texttt{5e-2} & \texttt{5e1} & \texttt{5e4}\\\hline
            \csvreader[no head, late after line=\\]
            {table-data/Ksdg-tc-scalebytau.csv}{}{\if1\thecsvrow30\else\if2\thecsvrow100\else\if3\thecsvrow300\else1000\fi\fi\fi & \csvcoli & \csvcolii & \csvcoliii & \csvcoliv & \csvcolv}
        \end{tabular}
        \caption{Scaling $\tau$, advection-diffusion}
        \label{tab:num:tc:h}
    \end{subtable}
    \vfill
    \begin{subtable}{.49\textwidth}
        \begin{tabular}{c|ccccc}
            $\Tidx\backslash d$ & \texttt{1e-3} & \texttt{1e-2} & \texttt{1e-1} & \texttt{1e0} & \texttt{1e1}\\\hline
            \csvreader[no head, late after line=\\]
            {table-data/Ksdd-tc-scalebyT.csv}{}{\if1\thecsvrow30\else\if2\thecsvrow100\else\if3\thecsvrow300\else1000\fi\fi\fi & \csvcoli & \csvcolii & \csvcoliii & \csvcoliv & \csvcolv}
        \end{tabular}
        \caption{Scaling $T$, advection-diffusion}
        \label{tab:num:tc:i}
    \end{subtable}
    \hfill
    \begin{subtable}{.49\textwidth}
        \begin{tabular}{c|ccccc}
            $\Tidx\backslash d$ & \texttt{1e-3} & \texttt{1e-2} & \texttt{1e-1} & \texttt{1e0} & \texttt{1e1}\\\hline
            \csvreader[no head, late after line=\\]
            {table-data/Ksdd-tc-scalebytau.csv}{}{\if1\thecsvrow30\else\if2\thecsvrow100\else\if3\thecsvrow300\else1000\fi\fi\fi & \csvcoli & \csvcolii & \csvcoliii & \csvcoliv & \csvcolv}
        \end{tabular}
        \caption{Scaling $\tau$, advection-diffusion}
        \label{tab:num:tc:j}
    \end{subtable}
    \caption{\textsc{gmres} iteration counts for terminal-cost ParaDiag applied to the diffusion equation \cref{eq:num:intro:diff} or the advection-diffusion equation \cref{eq:num:intro:advdiff}. The symbol $\varnothing$ indicates a failure to converge within 25 iterations. By default, $T_\mathrm{ref}=2$, $\gamma=0.05$, and $d=0.1$ when applicable. All results use $\alpha=10^{-4}$.}
    \label{tab:num:tc}
\end{table}

The same experiments were done for terminal-cost objectives in \cref{tab:num:tc}, although $\alpha=10^{-4}$ was chosen here. The results are very promising. For the diffusion equation, iteration counts are low across the board, with no scenario surpassing $4$ iterations. The only exception is that of a \emph{very} small $\gamma$ (that is, a large $\widehat\gamma$), which was indeed theori\sz{}ed to work poorly in \cref{sec:pd-tc:interp}.

When adding advection, slightly more iterations are needed, but the increase stays reasonable. Again, the qualitative insights from the self-adjoint case carry over: a small regulari\sz{}ation parameter $\gamma$ can cause slow convergence. Small $d$ values also seem to result in an increased iteration count. However, scaling is excellent in all scenarios, confirming the theoretical conclusions from \cref{sec:scale}.

\section{Conclusions} \label{sec:concl}
    This paper has extended optimi\sz{}ation ParaDiag in three ways. For the existing algorithm \cite{wuDiagonalizationbasedParallelintimeAlgorithms2020b}, aimed at tracking objectives, we proposed an alpha-circulant extension to improve the edge case of the regime with small final time $T$ and a generali\sz{}ation to non-self-adjoint problems. We also designed a new algorithm to treat terminal-cost objectives, which is robust with respect to changing $T$. In doing so, we greatly expanded the range of problems for which efficient ParaDiag algorithms are available.

Secondly, we were able to formulate a precise expression for the preconditioned eigenvalues of all optimi\sz{}ation ParaDiag methods, in the self-adjoint case. This significantly improves our understanding of these algorithms, for which very little theory was available before. We used this knowledge for two purposes. \begin{itemize}
    \item For dissipative, self-adjoint equations with a tracking objective, and when using the new parameter $\alpha=-1$ to construct a preconditioner, we were able to prove a guaranteed \textsc{gmres} convergence factor of $1/2$.
    \item In a theoretical parallel-scaling analysis, we conjectured good weak scalability of all ParaDiag variants in the limit for many time steps.
\end{itemize}
This scalability was confirmed by numerical experiments that used \textsc{gmres} iteration counts as an indicator of performance. In addition, these tests suggested the theoretical conclusions carry over to the non-self-adjoint case, even though our analysis does not apply there.

As a third contribution, our progress clears the way for exciting research in the future. With a more general method, into which some theoretical insight is available, potential next steps include non-linear ParaDiag algorithms, such as those already proposed for \textsc{ivp} ParaDiag \cite{ganderTimeParallelizationNonlinear2017a,liuFastBlockAcirculant2020a}. Here, the robustness of the $\alpha=-1$ choice for tracking could prove important in dealing with low-$\widehat\sigma$ lineari\sz{}ations.

Another interesting avenue for future work building on our results is the study of different time-discreti\sz{}ation methods -- especially if they can be written as \cref{eq:pd-track:anal:resc} or \cref{eq:pd-tc:anal:resc} since then, our theoretical results apply. In addition, ParaDiag has shown promise for hyperbolic problems, which are often challenging for time-parallel methods. Next steps in this context could include a study of the methods in this paper for advection equations, or improvements to optimi\sz{}ation ParaDiag for wave equations \cite{wuParallelInTimeBlockCirculantPreconditioner2020a} similar to those in this paper. On the computational side, different techniques have been applied to solve the smaller systems in the inversion procedure for \textsc{ivp} ParaDiag more efficiently \cite{liuROMacceleratedParallelintimePreconditioner2020,heVankatypeMultigridSolver2022a}. Adapting these methods to optimi\sz{}ation ParaDiag could substantially improve performance.

We lastly mention some alternatives to the preconditioners proposed in this paper that may be worthwhile to pursue. First, a very interesting recent result \cite{kressnerImprovedParallelintimeIntegration2022} in the domain of \textsc{ivp} ParaDiag suggests using alpha-circulant approximations, but not as preconditioners. Instead, it is noted that in the \textsc{ivp} situation, the exact system matrix is $\kronm P(\alpha)$ with $\alpha=0$ and its inversion is seen as an interpolation problem, with as data points several inversions with $\alpha_j\ne0$. Our terminal-cost preconditioner \cref{eq:pd-tc:new:Palpha} is not suitable for this, as it has $\kronm P(0)\ne\kronm A$. However, for the tracking preconditioner \cref{eq:pd-track:alpha:Palpha}, $\kronm P(0) = \kronm{\widehat A}$ does hold. As this text makes $\alpha\ne1$ feasible for tracking, we can use different $\alpha_j$ with magnitude $1$ as data points and \cite{kressnerImprovedParallelintimeIntegration2022}'s technique could now apply to tracking-type optimal-control ParaDiag. Second, for the terminal-cost case, alternative preconditioners (especially those that retain the $E$ block in \cref{eq:pd-tc:new:aao}) may improve on the convergence and scaling of our proposal in the regime with low $\widehat\sigma$ and high $\widehat\gamma$.

\clearpage
\appendix
\section{Proof of Theorem \ref*{thm:pd-track:eigs}} \label{sec:apdx-pdte}
    We denote by $C(\alpha)$ the top-left block of the matrix inverted in \cref{eq:thm:pd-track:eigs:M}. We start by switching the top and bottom halves of the rows of both $R$ and the inverted matrix -- which does not change $M$ -- and applying the block matrix inversion property from \cite[page 44]{bernsteinMatrixMathematicsTheory2005}, giving
\begingroup\allowdisplaybreaks
\begin{align*}
    M &= {\begin{litmat}
        -\psi I & C(\alpha)^\trsp\\C(\alpha) & \psi I\\
    \end{litmat}^{-1}\begin{litmat}[cc|cc]
        &&&\\
        &&\alpha\varphi\\
        \cmidrule(lr){1-2}\cmidrule(lr){3-4}
        &\alpha\varphi&\\
        &&\\
    \end{litmat}}\\
    &={\left[\begin{smallmatrix}
        (-\psi I - \frac1\psi C(\alpha)^\trsp C(\alpha))^{-1}\\
        & (\psi I + \frac1\psi C(\alpha)C(\alpha)^\trsp)^{-1}\\
    \end{smallmatrix}\right]\left[\begin{smallmatrix}
        I & -\frac1\psi C(\alpha)^\trsp\\
        \frac1\psi C(\alpha) & I\\
    \end{smallmatrix}\right]\begin{litmat}[cc|cc]
        &&&\\
        &&\alpha\varphi\\
        \cmidrule(lr){1-2}\cmidrule(lr){3-4}
        &\alpha\varphi&\\
        &&\\
    \end{litmat}}\\
    &= {\begin{litmat}
        \overbrace{(-\psi I - \frac1\psi C(\alpha)^\trsp C(\alpha))^{-1}}^{= -H}\\
        & \underbrace{(\psi I + \frac1\psi C(\alpha)C(\alpha)^\trsp)^{-1}}_{\eqqcolon H}\\
    \end{litmat}\left[\begin{smallarray}{ccc|ccc}
        &&-\frac{\alpha\varphi}\psi&&&\\
        &&&\\
        &&\invisibleminus\frac{\varphi^2}\psi\alpha^2&\alpha\varphi\\
        \cmidrule(lr){1-3}\cmidrule(lr){4-6}&&\alpha\varphi&-\frac{\varphi^2}\psi\alpha^2\\
        &&&\\
        &&&\frac{\alpha\varphi}\psi\\
    \end{smallarray}\right]}    \mcom
\end{align*}
\endgroup
where we know $\alpha^2=1$. The fact that $M$ has only two potentially non-zero eigenvalues is clear: the second matrix in the product above has rank $2$, such that the result of a multiplication by it cannot have any higher rank.

To find out more about these non-zero eigenvalues, first observe that 
\begin{equation}
C(\alpha)^\trsp C(\alpha)=C(\alpha)C(\alpha)^\trsp    \mcom
\end{equation}
as can be easily checked. This justifies the use of the variable $H$ for both blocks above. We will try to calculate $H$ later, but in the spirit of not doing excess work, let us first see which parts of $H$ we need at all.
\begin{equation}
    M = \begin{litmat}
        -H\\&H\\
    \end{litmat}\begin{litmat}[ccc|ccc]
        &&-\frac{\alpha\varphi}\psi&&&\\
        &&&\\
        &&\frac{\varphi^2}\psi&\alpha\varphi\\
        \cmidrule(lr){1-3}\cmidrule(lr){4-6}&&\alpha\varphi&-\frac{\varphi^2}\psi\\
        &&&\\
        &&&\frac{\alpha\varphi}\psi\\
    \end{litmat} \eqqcolon \begin{litmat}[ccc|ccc]
        &&a_1&b_1&&\\
        &&\vdots&\vdots\\
        &&a_{\widehat\Tidx}&b_{\widehat\Tidx}\\
        \cmidrule(lr){1-3}\cmidrule(lr){4-6} &&a_{\widehat\Tidx+1}&b_{\widehat\Tidx+1}\\
        &&\vdots&\vdots\\
        &&a_{2\widehat\Tidx}&b_{2\widehat\Tidx}\\
    \end{litmat}    \mcom
\end{equation}
which means that $M$'s non-zero eigenvalues are the same as those of its middle block
\begin{equation} \label{eq:pd-track:eigs:Mred}
    M_\mathrm{red} = \begin{litmat}
        a_{\widehat\Tidx}&b_{\widehat\Tidx}\\
        a_{\widehat\Tidx+1}&b_{\widehat\Tidx+1}\\
    \end{litmat} = \begin{litmat}
        \frac{\alpha\varphi}\psi h_{\en,0} - \frac{\varphi^2}\psi h_{\en,\en}   &   -\alpha\varphi h_{\en,\en}\\
        \alpha\varphi h_{0,0}   &    -\frac{\varphi^2}\psi h_{0,0} + \frac{\alpha\varphi}\psi h_{0,\en}
    \end{litmat}    \mper
\end{equation}

Thus, it suffices to find the corner values of
\begin{equation} \label{eq:pd-track:anal:H}
\begin{aligned}
    H &= (\psi I + \frac1\psi C(\alpha)C(\alpha)^\trsp)^{-1} =\psi(\underbrace{\psi^2I+C(\alpha)C(\alpha)^\trsp}_{\eqqcolon G})^{-1}\\&= \psi\begin{litmat}
        1+\varphi^2+\psi^2 & -\varphi & & -\alpha\varphi\\
        -\varphi & 1+\varphi^2+\psi^2 & \ddots &\\
        & \ddots & \ddots & -\varphi\\
        -\alpha\varphi & & -\varphi & 1+\varphi^2+\psi^2\\
    \end{litmat}^{-1}    \mper
\end{aligned}
\end{equation}
The matrix $G$ being inverted is $\alpha$-circulant and symmetric -- qualities that are maintained by the inversion. Then $h_{0,0}=h_{\en,\en}$ and $h_{0,\en}=h_{\en,0}$. Hence
\begin{equation} \label{eq:thm:pd-track:eigs:abeq}
    a_{\widehat\Tidx} = b_{\widehat\Tidx+1} = -\frac{\varphi^2}\psi h_{0,0} + \frac{\alpha\varphi}\psi h_{0,\en} \quad \text{and} \quad a_{\widehat\Tidx+1} = -b_{\widehat\Tidx} = \alpha\varphi h_{0,0}    \mper
\end{equation}
This means that
\begin{equation} \label{eq:thm:pd-track:eigs:Mredeig}
    \mathrm{eig}(M_\mathrm{red}) = a_{\widehat\Tidx} \pm b_{\widehat\Tidx}\iu    \mper
\end{equation}

All this assumes we have inverted $G$. Alpha-circulant matrices can be inverted using their spectral decomposition \cref{eq:pd-track:alpha:Calpha-fact}. Up until now, we used this as a computational tool; extracting useful analytical expressions is not a trivial feat. The diagonali\sz{}ation reads $G=\Gamma_\alpha^{-1}\dfm ^*D\dfm \Gamma_\alpha \Leftrightarrow G^{-1}=\Gamma_\alpha^{-1}\dfm ^*D^{-1}\dfm \Gamma_\alpha$. Here,
\begin{equation}
\begin{aligned}
    D &= \mathrm{diag}(\sqrt{\widehat\Tidx}\dfm \Gamma_\alpha\mv{g_1})\\
    &= \mathrm{diag}\{
        1+\varphi^2+\psi^2 - \E^{(j/\widehat\Tidx)2\pi\iu}\alpha^{1/\widehat\Tidx}\varphi - \E^{-(j/\widehat\Tidx)2\pi\iu}\alpha^{(\widehat\Tidx-1)/\widehat\Tidx}\alpha\varphi
    \}_{j=0}^{\widehat\Tidx-1}\\
    &= \mathrm{diag}\left\{d(\beta_j(\alpha,\widehat\Tidx))\right\}_{j=0}^{\widehat\Tidx-1}
\end{aligned}
\end{equation}
where $\mv{g_1}$ denotes $G$'s first column and where we defined
\begin{equation}
    d(\beta) = 1+\varphi^2+\psi^2 - 2\varphi\cos\beta \quad \text{and} \quad \beta_j(\alpha,\widehat\Tidx) = \begin{cases*}
        2j\pi/\widehat\Tidx & if $\alpha=1$\\
        2(j+\frac12)\pi/\widehat\Tidx & if $\alpha=-1\mper$
    \end{cases*}
\end{equation}
Then from $H=\psi G^{-1}$ follows, noting the definitions of $\Gamma_\alpha$ and $\dfm $,
\begin{subequations} \label{eq:pd-track:hs-cos}
\begin{align}
    \label{eq:pd-track:hs-cos:1} h_{0,0}(\alpha,\widehat\Tidx)=h_{\en,\en}(\alpha,\widehat\Tidx)&=\frac\psi{\widehat\Tidx}\sum_{j=0}^{\widehat\Tidx-1}{d(\beta_j(\alpha,\widehat\Tidx))^{-1}}    \mcom\\
    \label{eq:pd-track:hs-cos:2} \begin{split}h_{0,\en}(\alpha,\widehat\Tidx)=h_{\en,0}(\alpha,\widehat\Tidx)&=\frac\psi{\widehat\Tidx}\sum_{j=0}^{\widehat\Tidx-1}{\alpha^{(\widehat\Tidx-1)/\widehat\Tidx}\E^{-(j/\widehat\Tidx)2\pi\iu}d(\beta_j(\alpha,\widehat\Tidx))^{-1}}\\&= \alpha\frac\psi{\widehat\Tidx}\sum_{j=0}^{\widehat\Tidx-1}{\E^{-\beta_j(\alpha,\widehat\Tidx)\iu}d(\beta_j(\alpha,\widehat\Tidx))^{-1}}   \mper \end{split}
\end{align}
\end{subequations}
These analytic expressions are not insightful. Luckily, we have yet another avenue to find $h_{0,0}$ and $h_{0,\en}$: \cite[Theorem 1(a)]{searleInvertingCirculantMatrices1979} offers explicit formulas for inverting certain three-element circulant matrices. For $\alpha=1$, those formulas mean that
\begin{subequations}
\begin{align}
    h_{0,0}(1,\widehat\Tidx) = h_{\en,\en}(1,\widehat\Tidx) &= \psi\frac{z_1z_2}{\varphi(z_2-z_1)}\bigg(\frac1{1-z_1^{\widehat\Tidx}}-\frac1{1-z_2^{\widehat\Tidx}}\bigg)    \mcom\\
    h_{0,\en}(1,\widehat\Tidx) = h_{\en,0}(1,\widehat\Tidx) &= \psi\frac{z_1z_2}{\varphi(z_2-z_1)}\bigg(\frac{z_1}{1-z_1^{\widehat\Tidx}}-\frac{z_2}{1-z_2^{\widehat\Tidx}}\bigg)
\end{align}
\end{subequations}
with $z_{\{1,2\}}=(1+\varphi^2+\psi^2\pm\sqrt{(1+\varphi^2+\psi^2)^2-4\varphi^2})/(2\varphi)$. However, \cite{searleInvertingCirculantMatrices1979} tells us nothing about the case $\alpha=-1$.

\begin{figure}[h!]
    \centering
    \begin{tikzpicture}[scale=0.9]
        \draw[-] (0,0) -- (12,0);
        \draw[shift={(0,0)},color=black] (0pt,5pt) -- (0pt,-5pt) node[below] {$0$};
        \draw[shift={(0,0)},color=matlabred] (0pt,18pt) node {$\beta_0(1,\widehat\Tidx)$};
        \draw[shift={(0,0)},color=black] (0pt,33pt) node {$\beta_0(1,2\widehat\Tidx)$};

        \draw[shift={(2,0)},color=black] (0pt,5pt) -- (0pt,-5pt) node[below] {$\pi/3$};
        \draw[shift={(2,0)},color=matlabblue] (0pt,18pt) node {$\beta_0(-1,\widehat\Tidx)$};
        \draw[shift={(2,0)},color=black] (0pt,33pt) node {$\beta_1(1,2\widehat\Tidx)$};

        \draw[shift={(4,0)},color=black] (0pt,5pt) -- (0pt,-5pt) node[below] {$2\pi/3$};
        \draw[shift={(4,0)},color=matlabred]  (0pt,18pt) node {$\beta_1(1,\widehat\Tidx)$};
        \draw[shift={(4,0)},color=black]  (0pt,33pt) node {$\beta_2(1,2\widehat\Tidx)$};

        \draw[shift={(6,0)},color=black] (0pt,5pt) -- (0pt,-5pt) node[below] {$\pi$};
        \draw[shift={(6,0)},color=matlabblue] (0pt,18pt) node {$\beta_1(-1,\widehat\Tidx)$};
        \draw[shift={(6,0)},color=black] (0pt,33pt) node {$\beta_3(1,2\widehat\Tidx)$};

        \draw[shift={(8,0)},color=black] (0pt,5pt) -- (0pt,-5pt) node[below] {$4\pi/3$};
        \draw[shift={(8,0)},color=matlabred]  (0pt,18pt) node {$\beta_2(1,\widehat\Tidx)$};
        \draw[shift={(8,0)},color=black] (0pt,33pt) node {$\beta_4(1,2\widehat\Tidx)$};

        \draw[shift={(10,0)},color=black] (0pt,5pt) -- (0pt,-5pt) node[below] {$5\pi/3$};
        \draw[shift={(10,0)},color=matlabblue]  (0pt,18pt) node {$\beta_2(-1,\widehat\Tidx)$} ;
        \draw[shift={(10,0)},color=black] (0pt,33pt) node {$\beta_5(1,2\widehat\Tidx)$};

        \draw[shift={(12,0)},color=black] (0pt,5pt) -- (0pt,-5pt) node[below] {$2\pi$};
    \end{tikzpicture}
    \caption{$\beta_j(\cdot, \cdot)$ for different parameters when $\widehat\Tidx=3$\vspace{-.3cm}}
    \label{fig:pd-track:anal:beta}
\end{figure}

Now, we can utili\sz{}e the expressions \cref{eq:pd-track:hs-cos}. \Cref{fig:pd-track:anal:beta} shows the spacing of the $\beta_j$s when $\widehat\Tidx=3$ for $\alpha=1$ (red) and $\alpha=-1$ (blue). Combined with \cref{eq:pd-track:hs-cos:1}, it is clear that $h_{0,0}(1,2\widehat\Tidx)$ sums over the same $\beta$s as $h_{0,0}(1,\widehat\Tidx)$ and $h_{0,0}(-1,\widehat\Tidx)$ combined. After correcting for the scaling by $\widehat\Tidx$ in \cref{eq:pd-track:hs-cos:1}, we get
\begin{equation}
    h_{0,0}(-1,\widehat\Tidx) = 2h_{0,0}(1,2\widehat\Tidx)-h_{0,0}(1,\widehat\Tidx) = \psi\frac{z_1z_2}{\varphi(z_2-z_1)}\left(\frac1{1+z_1^{\widehat\Tidx}}-\frac1{1+z_2^{\widehat\Tidx}}\right)    \mper
\end{equation}
A similar technique can be used for $h_{0,\en}$, yielding
\begin{equation}
\begin{aligned}
    h_{0,\en}(-1,\widehat\Tidx) &= h_{0,\en}(1,\widehat\Tidx)-2h_{0,\en}(1,2\widehat\Tidx) \\&= -\psi\frac{z_1z_2}{\varphi(z_2-z_1)}\left(\frac{z_1}{1+z_1^{\widehat\Tidx}}-\frac{z_2}{1+z_2^{\widehat\Tidx}}\right)    \mper
\end{aligned}
\end{equation}
As a last step, we have that $z_1z_2=1$. To see this, note that
\begin{equation}
\begin{aligned}
    z_1z_2 &= \frac1{4\varphi^2}\left(
        (1+\varphi^2+\psi^2)^2 - (\sqrt{(1+\varphi^2+\psi^2)^2-4\varphi^2})^2
    \right)\\
    &= \frac1{4\varphi^2}\left(
        (1+\varphi^2+\psi^2)^2 - (1+\varphi^2+\psi^2)^2 + 4\varphi^2
    \right) = 1    \mper
\end{aligned}
\end{equation}
Eliminating the square root is allowed due to its contents always being non-negative. Indeed, $(1+\varphi^2+\psi^2)^2-4\varphi^2$ reaches a minimum for $\psi=0$, where we get $1+2\varphi^2+\varphi^4-4\varphi^2=(1-\varphi^2)^2$, which cannot be negative. Filling the expressions for the $h$s into \cref{eq:thm:pd-track:eigs:abeq} and \cref{eq:thm:pd-track:eigs:Mredeig} proves the theorem.

\clearpage
\section{Proof of Theorem \ref*{thm:pd-tc:eigs}} \label{sec:apdx-pdtce}
    We use the notation $C(\alpha)$ for the top-left block of the matrix inverted in \cref{thm:pd-tc:eigs:M}. We start the proof similarly to \cref{sec:apdx-pdte}.
\subparagraph{Finding $M_\mathrm{red}$} The inverse of the block-triangular matrix can be rewritten as
\begin{equation}
    M = \begin{litmat}
        \overbrace{C(\alpha)^{-1}}^{\eqqcolon H} & -\psi \overbrace{C(\alpha)^{-1}C(\alpha)^{-\trsp}}^{\eqqcolon G}\\& \underbrace{C(\alpha)^{-\trsp}}_{= H^\trsp}\\
    \end{litmat}\begin{litmat}[ccc|ccc]
        &&\alpha\varphi&&&\\
        &&&&&\\
        \cmidrule(lr){1-3}\cmidrule(lr){4-6}&&&&&\\
        &&-1&\alpha\varphi&&\\
    \end{litmat}\eqqcolon \left[\begin{smallarray}{ccc|ccc}
        &&a_1&b_1&&\\
        &&\vdots&\vdots\\
        &&a_{\Tidx}&b_{\Tidx}\\
        \cmidrule(lr){1-3}\cmidrule(lr){4-6} &&a_{\Tidx+1}&b_{\Tidx+1}\\
        &&\vdots&\vdots\\
        &&a_{2\Tidx}&b_{2\Tidx}\\
    \end{smallarray}\right]
\end{equation}
such that the potentially non-zero eigenvalues of $M$ are the same as those of
\begin{equation}    \label{eq:pd-tc:anal:Mred}
    M_\mathrm{red} = \begin{litmat}
        a_\Tidx & b_\Tidx\\
        a_{\Tidx+1} & b_{\Tidx+1}\\
    \end{litmat} = \begin{litmat}
        \alpha\varphi h_{\en,0}+\psi g_{\en,\en} & -\alpha\varphi\psi g_{\en,\en}\\
        -h_{\en,0} & \alpha\varphi h_{\en,0}\\
    \end{litmat}    \mper
\end{equation}
\subparagraph{Reducing the unknowns to $H$} It seems that we need the values $h_{\en,0}$ and $g_{\en,\en}$ to make further progress. First, let us consider
\begin{equation}
G = C(\alpha)^{-1}C(\alpha)^{-\trsp} = (C(\alpha)^\trsp C(\alpha))^{-1}    \mper
\end{equation}
To solve a similar problem in \cref{thm:pd-track:eigs}'s proof, we noted that the matrix being inverted was alpha-circulant and acted on that knowledge. However, if $\abs\alpha\ne1$, this is not the case anymore (as can easily be checked), so another method needs to be found. We can first express $g_{\en,\en}$ in terms of $H$ as
\begin{equation}
    g_{\en,\en} = \mv{H_{\en,:}}\mv{(H^\trsp)_{:,\en}} = \norm{\mv{H_{\en,:}}}_2^2 = \sum\nolimits_{j=0}^{\Tidx-1}{h_{\en,j}^2}    \mper
\end{equation}

\subparagraph{Inverting $C(\alpha)$} Let us now work on the problem of finding $H=C(\alpha)^{-1}$. If $C(\alpha)$ were fully circulant, \cite[Theorem 1(d)]{searleInvertingCirculantMatrices1979} would offer a relatively simple analytical expression for its inverse; unfortunately, it is alpha-circulant. Even the technique to invert $(-1)$-circulant matrices from \cref{thm:pd-track:eigs}'s proof does not suffice here. Luckily, we have yet another trick up our sleeves.

Again, the key is inverting the diagonali\sz{}ation in \cref{eq:pd-track:alpha:Calpha-fact}. Consider doing so for $C(\alpha)$, as well as for an \emph{actually} circulant matrix $\widehat C$ -- defined later -- giving
\begin{subequations} \label{eq:pd-tc:anal:C-Chat}
\begin{align}
    C(\alpha)^{-1}&=\Gamma_\alpha^{-1}\dfm ^*\diag(\sqrt\Tidx\dfm \Gamma_\alpha\mv c_1)^{-1}\dfm \Gamma_\alpha    \mcom\\
    \widehat C^{-1}&=\dfm ^*\diag(\sqrt\Tidx\dfm \mv{\widehat c}_1)^{-1}\dfm     \mper
\end{align}
\end{subequations}
If we now require $\mv{\widehat c}_1=\Gamma_\alpha\mv c_1$, this fully defines $\widehat C$. But then \cref{eq:pd-tc:anal:C-Chat} means that $H = C(\alpha)^{-1} = \Gamma_\alpha^{-1}\widehat C^{-1}\Gamma_\alpha$, which allows computing $H$. By \cite[Theorem 1(d)]{searleInvertingCirculantMatrices1979},
\begin{equation}
    h_{\en,j} = \alpha^{(j-(\Tidx-1))/\Tidx}\alpha^{(\Tidx-j-1)/\Tidx}\frac{\varphi^{\Tidx-j-1}}{1-\alpha\varphi^\Tidx} = \frac{\varphi^{\Tidx-j-1}}{1-\alpha\varphi^\Tidx}
\end{equation}
from which immediately follow $h_{\en,0} = \frac{\varphi^{\Tidx-1}}{1-\alpha\varphi^\Tidx}$ and
\begin{equation*}
\begin{aligned}
    g_{\en,\en} &= \sum_{j=0}^{\Tidx-1}{h_{\en,j}^2} = \sum_{j=0}^{\Tidx-1}\left(\frac{\varphi^j}{1-\alpha\varphi^\Tidx}\right)^2 = \frac1{(1-\alpha\varphi^\Tidx)^2}\sum_{j=0}^{\Tidx-1}{(\varphi^2)^j} \\&= \frac1{(1-\alpha\varphi^\Tidx)^2}\frac{1-\varphi^{2\Tidx}}{1-\varphi^2}    \mper
\end{aligned}
\end{equation*}
Filling these into \cref{eq:pd-tc:anal:Mred} gives \cref{eq:lmm:pd-tc:anal:eigs:Mred}, while the limits \cref{eq:lmm:pd-tc:anal:eigs:lim} are then trivial as the entire second column of $M_\mathrm{red}$ goes to zero when $\alpha\rightarrow0$.

\clearpage
\section{Proofs of auxiliary lemmas} \label{sec:apdx-pdta}
    \begin{lemma}[Some properties of \cref{thm:pd-track:eigs}'s $z_1$ and $z_2$] \label{lmm:apdx-proof:pd-track:zs}
    Defining
    \begin{equation} \label{eq:lmm:apdx-proof:pd-track:zs:zs}
        z_{\{1,2\}}=(1+\varphi^2+\psi^2\pm\sqrt{(1+\varphi^2+\psi^2)^2-4\varphi^2})/(2\varphi)    \mcom
    \end{equation}
    the following properties hold.
    \begin{enumerate}[label=(\alph*)]
        \item If $0<\varphi<1$, both $z_1$ and $z_2$ are real-valued and it holds that $0 < z_2 < 1 < z_1$.
        \item If $0<\varphi<1$, it holds that $z_2 \le \varphi$.
    \end{enumerate}
\end{lemma}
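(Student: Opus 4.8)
The plan is to work with the two elementary symmetric functions of $z_1$ and $z_2$. Writing $s\coloneqq 1+\varphi^2+\psi^2$, the number $z_{\{1,2\}}$ are exactly the roots of the quadratic $\varphi z^2-sz+\varphi=0$, so $z_1+z_2=s/\varphi$ and $z_1z_2=1$. The product identity $z_1z_2=1$ is precisely the computation already performed in the proof of \cref{thm:pd-track:eigs} (it needs only $\varphi\ne 0$ and $\psi\in\mathbb R$), so I would simply cite it.

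For part (a), I would first observe that $1+\varphi^2>2\varphi$ whenever $\varphi\ne 1$, since this is just $(1-\varphi)^2>0$; hence $s>2\varphi>0$ and the discriminant factors as $s^2-4\varphi^2=(s-2\varphi)(s+2\varphi)>0$. Thus $z_1$ and $z_2$ are real and distinct. Their product being $1>0$ forces them to have the same sign, and their sum $s/\varphi>0$ forces that sign to be positive. Two distinct positive reals with product $1$ must straddle $1$: one lies in $(0,1)$ and the other in $(1,\infty)$. Since $z_1$ carries the $+$ sign and $z_2$ the $-$ sign, we have $z_1>z_2$, giving $0<z_2<1<z_1$.

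For part (b), the claim $z_2\le\varphi$ is equivalent, after multiplying through by $2\varphi>0$ and rearranging, to $s-2\varphi^2\le\sqrt{s^2-4\varphi^2}$. Here $s-2\varphi^2=1-\varphi^2+\psi^2$, which is strictly positive when $0<\varphi<1$; so both sides are nonnegative and squaring is legitimate. Squaring yields $s^2-4\varphi^2 s+4\varphi^4\le s^2-4\varphi^2$, which simplifies (cancelling $s^2$, then dividing by $-4\varphi^2<0$, which reverses the inequality) to $s-\varphi^2\ge 1$, i.e.\ $\psi^2\ge 0$ — trivially true, with equality exactly when $\psi=0$.

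There is no real obstacle; the proof is a short chain of algebraic manipulations. The only points requiring care are the sign bookkeeping that licenses squaring and dividing: using $\varphi<1$ to get $s-2\varphi^2>0$ in (b), and using $\varphi\ne 1$ (together with $\psi$ real) to get a \emph{strict} discriminant inequality in (a), which is what makes the roots genuinely distinct and hence delivers the strict inequalities $0<z_2<1<z_1$.
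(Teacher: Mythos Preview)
Your proof is correct and follows essentially the same route as the paper's: factoring the discriminant via $(1-\varphi)^2+\psi^2\ge0$ to get reality, invoking $z_1z_2=1$ to place the roots on either side of $1$, and for (b) rearranging and squaring to reduce $z_2\le\varphi$ to $\psi^2\ge0$. The only cosmetic differences are your use of the shorthand $s$ and the explicit Vieta framing; the underlying chain of equivalences is identical to the paper's.
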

\begin{proof}
    We prove these claims one by one.
    \begin{enumerate}[label=(\alph*)]
        \item The quantity in \cref{eq:lmm:apdx-proof:pd-track:zs:zs}'s square root reads
        \begin{equation}
            (1+\varphi^2+\psi^2)^2-4\varphi^2 = (1+\varphi^2+\psi^2 + 2\varphi)\underbrace{(1+\varphi^2+\psi^2 - 2\varphi)}_{=(1-\varphi)^2+\psi^2}    \mcom
        \end{equation}
        which is positive, such that the $z$s are real numbers. They are also positive, as follows from $4\varphi^2>0$ and $\varphi>0$. Furthermore, $z_1>z_2$. Since their product is $1$ (see \cref{sec:apdx-pdte}), $z_1$ must be larger than $1$ while $z_2$ is smaller.
        \item We write
        \begin{align*}
            &&z_2 &\le \varphi\\
            \Leftrightarrow&&1+\varphi^2+\psi^2-\sqrt{(1+\varphi^2+\psi^2)^2-4\varphi^2} &\le 2\varphi^2\\
            \Leftrightarrow&&1-\varphi^2+\psi^2 &\le \sqrt{(1+\varphi^2+\psi^2)^2-4\varphi^2}\\
            \Leftrightarrow&&(1-\varphi^2+\psi^2)^2 &\le (1+\varphi^2+\psi^2)^2-4\varphi^2\\
            \Leftrightarrow&&4\varphi^2 &\le 4\varphi^2(1+\psi^2)    \mcom
        \end{align*}
        which is clearly true.
    \end{enumerate}
\end{proof}

\begin{lemma}[Some properties of \cref{thm:pd-track:eigs}'s $\omega_1$ and $\omega_2$] \label{lmm:apdx-proof:pd-track:thetas}
    Define
    \begin{equation}
        \omega_{\{1,2\}} = \frac1{z_2-z_1}\bigg(
            \frac{z_1-\varphi\pm\psi\iu}{1 + z_1^{\widehat\Tidx}}-\frac{z_2-\varphi\pm\psi\iu}{1 + z_2^{\widehat\Tidx}}
        \bigg)
    \end{equation}
    with $z_1$ and $z_2$ as in \cref{eq:lmm:apdx-proof:pd-track:zs:zs}, where $\widehat\Tidx\ge 1$ and $0<\varphi<1$. Then denote by $\Re(\omega) = \Re(\omega_1) = \Re(\omega_2)$ the real part characterising the $\omega$s and by $\Im(\omega) = \Im(\omega_1) = -\Im(\omega_2)$ the imaginary part.
    \begin{enumerate}[label=(\alph*)]
        \item It holds that $\Re(\omega) < 0$ increases monotonically with increasing $\widehat\Tidx$.
        \item It holds that $\Im(\Omega) > 0$ increases monotonically with increasing $\widehat\Tidx$.
        \item Following (a), it holds that $-\frac12 < \Re(\omega_{\{1,2\}})$.
        \item Following (a) and (b), it holds that $\abs{\frac12 + \omega_{\{1,2\}}} < \frac12$.
    \end{enumerate}
\end{lemma}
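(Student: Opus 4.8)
The plan is to split $\omega_1$ (the root carrying $+\psi\iu$; then $\omega_2=\overline{\omega_1}$) into its real and imaginary parts, which is immediate since $1/(z_2-z_1)$, $z_1$, $z_2$ and $\varphi$ are all real. The decisive preliminary step is to use the identity $z_1z_2=1$ (established in \cref{sec:apdx-pdte}): set $w\coloneqq z_1>1$ and $z_2=1/w$, clear the negative powers of $w$, and perform one polynomial division. This collapses both parts to the single-variable closed forms
\begin{gather*}
    \Re(\omega)=\frac{w}{1-w^2}\left[\Bigl(\varphi-\tfrac1w\Bigr)+\frac{w+\tfrac1w-2\varphi}{1+w^{\widehat\Tidx}}\right],\\
    \Im(\omega)=\frac{\psi w}{w^2-1}\cdot\frac{w^{\widehat\Tidx}-1}{w^{\widehat\Tidx}+1}.
\end{gather*}
From here everything is sign-chasing. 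By \cref{lmm:apdx-proof:pd-track:zs} we have $w>1$ and $0<1/w=z_2\le\varphi<1$, so $\varphi-\tfrac1w\ge0$, $w+\tfrac1w-2\varphi\ge2-2\varphi>0$, and $\tfrac{w}{1-w^2}<0$. Hence the bracket for $\Re(\omega)$ is a nonnegative constant plus a strictly positive term that strictly decreases in $\widehat\Tidx$ (because $w>1$ makes $1+w^{\widehat\Tidx}$ strictly increasing); multiplying by the negative prefactor gives $\Re(\omega)<0$ and a strictly increasing dependence on $\widehat\Tidx$, which is (a). For (b), $\tfrac{w^{\widehat\Tidx}-1}{w^{\widehat\Tidx}+1}=1-\tfrac{2}{1+w^{\widehat\Tidx}}$ is positive for $\widehat\Tidx\ge1$ and strictly increasing, while $\tfrac{\psi w}{w^2-1}>0$ (here one uses $\psi>0$, the dissipative regime in which the lemma is applied), so $\Im(\omega)>0$ and increases with $\widehat\Tidx$.

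For (c), I would note that the two closed forms above are valid for any real exponent and that, evaluated at $\widehat\Tidx=0$, the bracket for $\Re(\omega)$ equals $\tfrac{w^2-1}{2w}$, so $\Re(\omega)\big|_{\widehat\Tidx=0}=-\tfrac12$ exactly; combined with the strict monotonicity from (a), this gives $-\tfrac12<\Re(\omega)$ for every $\widehat\Tidx\ge1$. For (d), put $r(\widehat\Tidx)\coloneqq\bigl|\tfrac12+\omega_1\bigr|^2=\bigl(\tfrac12+\Re(\omega)\bigr)^2+\Im(\omega)^2$. By (a) and (c), $\tfrac12+\Re(\omega)$ is positive and increasing; by (b), $\Im(\omega)$ is positive and increasing; hence $r$ is strictly increasing in $\widehat\Tidx$. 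It therefore suffices to show $\lim_{\widehat\Tidx\to\infty}r=\tfrac14$: letting $\widehat\Tidx\to\infty$ kills the $1/(1+w^{\widehat\Tidx})$ terms and sends $\tfrac{w^{\widehat\Tidx}-1}{w^{\widehat\Tidx}+1}\to1$, and after clearing denominators the required equality reduces to the polynomial identity $(\varphi w-1)(w-\varphi)=\psi^2 w$, which follows from Vieta's formula $w+\tfrac1w=z_1+z_2=(1+\varphi^2+\psi^2)/\varphi$ for the quadratic defining $z_{\{1,2\}}$ in \cref{eq:thm:pd-track:eigs:z}. Since $r$ is strictly increasing with supremum $\tfrac14$, we get $r(\widehat\Tidx)<\tfrac14$ for all finite $\widehat\Tidx\ge1$, i.e.\ $\bigl|\tfrac12+\omega_1\bigr|<\tfrac12$, and the same bound holds for $\omega_2=\overline{\omega_1}$ by conjugation.

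The sign analysis is routine; the two genuinely delicate points are the algebraic simplification that produces the single-variable closed forms, and the exact cancellation in the limit of (d) — obtaining $\lim r=\tfrac14$ rather than merely $\le\tfrac14$ hinges on recognizing and verifying $(\varphi w-1)(w-\varphi)=\psi^2 w$ through the Vieta relation. A minor bookkeeping point is that part (b) as stated presupposes $\psi>0$; I would remark that the sign of $\psi$ only interchanges $\omega_1$ and $\omega_2$ and does not affect (a), (c), (d), which involve $\psi$ solely through $\psi^2$.
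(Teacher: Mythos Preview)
Your argument is correct and follows the same overall strategy as the paper: use $z_1z_2=1$ to write everything in the single variable $w=z_1>1$, establish monotonicity in $\widehat\Tidx$, and evaluate the extremes. The execution differs in two places that are worth noting. For (a)--(b) the paper differentiates in $\widehat\Tidx$ and checks the sign of the derivative, whereas you read off the monotonicity directly from the factor $1/(1+w^{\widehat\Tidx})$; your route avoids the calculus step. For (c) the paper evaluates at $\widehat\Tidx=1$ and shows by an algebraic chain that the resulting expression $-\varphi(\varphi+1)/((\varphi+1)^2+\psi^2)$ exceeds $-1/2$; your trick of extending to real exponents and evaluating at $\widehat\Tidx=0$ to hit exactly $-1/2$ is shorter and sidesteps that computation. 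Part (d) is the same argument in both: since $1/2+\Re(\omega)>0$ and both real and imaginary parts increase, $r(\widehat\Tidx)$ is increasing with limit $1/4$, and your verification of the limit via the Vieta identity $(\varphi w-1)(w-\varphi)=\psi^2 w$ is equivalent to the paper's direct expansion using $z_1+z_2=(1+\varphi^2+\psi^2)/\varphi$. Your remark that (b) tacitly assumes $\psi>0$ is accurate; the paper's proof has the same implicit assumption.
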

\begin{proof}
    Once again, the claims are addressed one by one.
    \begin{enumerate}[label=(\alph*)]
        \item We rewrite $\Re(\omega) = -\frac1{z_1 - z_2}\big(
                    \frac{z_1-\varphi}{1 + z_1^{\widehat\Tidx}}+\frac{\varphi-z_2}{1 + z_2^{\widehat\Tidx}}
                \big)$
            where, due to \cref{lmm:apdx-proof:pd-track:zs}(a) and \cref{lmm:apdx-proof:pd-track:zs}(b), all numerators and denominators are positive. \Cref{sec:apdx-pdte} showed $z_1z_2=1\Leftrightarrow z_2=1/z_1$ -- filling this in, we obtain
            \begin{equation}
                \frac\dif{\dif\widehat\Tidx}\Re(\omega) = \frac1{z_1-1/z_1}\frac{z_1^{\widehat\Tidx-1}(-2\varphi z_1 + z_1^2 + 1)\log z_1}{(z_1^{\widehat\Tidx}+1)^2}    \mper
            \end{equation}
            This is always positive (recall that $z_1>1>\varphi$), such that the claim holds.

        \item A similar technique works for $\Im(\omega) = \frac\psi{z_1-1/z_1}\bigl(\frac1{1+1/z_1^{\widehat\Tidx}}-\frac1{1+z_1^{\widehat\Tidx}}\bigr)$. We find
            \begin{equation}
                \frac\dif{\dif\widehat\Tidx}\Im(\omega) = \frac\psi{z_1-1/z_1}\frac{2z_1^{\widehat\Tidx}\log z_1}{(z_1^{\widehat\Tidx}+1)^2}    \mcom
            \end{equation}
            which is a positive quantity, confirming the claim.

        \item From \cref{lmm:apdx-proof:pd-track:thetas}(a), it follows that
        \begingroup\allowdisplaybreaks
        \begin{align*}
            \Re(\omega) &\ge -\frac1{z_1 - z_2}\bigg(
                \frac{z_1-\varphi}{1 + z_1}+\frac{\varphi-z_2}{1 + z_2}
            \bigg) = \frac{(z_1-\varphi)(1+z_2)+(\varphi-z_2)(1+z_1)}{(z_2-z_1)(1+z_1)(1+z_2)}\\
            &= -\frac{z_1-z_2+\varphi(z_1-z_2)}{(z_1-z_2)(2+z_1+z_2)} = -\frac{1+\varphi}{2+z_1+z_2}\\
            &= -\frac{1+\varphi}{2+(1+\varphi^2+\psi^2)/\varphi} = -\frac{\varphi(\varphi+1)}{(\varphi+1)^2+\psi^2}    \mper
        \end{align*}
        \endgroup
        Thus
        \begin{align*}
            -\frac12 &< \Re(\omega) \Leftarrow \frac{\varphi(\varphi+1)}{(\varphi+1)^2+\psi^2} < \frac12 \Leftrightarrow \varphi^2+\varphi < \frac{\varphi^2}2+\varphi+\frac12+\frac{\psi^2}2    \mcom
        \end{align*}
        the latter of which is true from the condition $0 < \varphi < 1$.

        \item Since $-1/2 < \Re(\omega) < 0$, it holds that $\abs{1/2+\omega_{\{1,2\}}}^2$ is bounded above by the squares of $\Re(1/2+\omega_{\{1,2\}})$ maximi\sz{}ed over $\widehat\Tidx$ and $\Im(1/2+\omega)$ maximi\sz{}ed over $\widehat\Tidx$. Thus, using the fact that these maxima are attained for $\widehat\Tidx\rightarrow\infty$ (where $z_1^{\widehat\Tidx}\rightarrow\infty$ and $z_2^{\widehat\Tidx}\rightarrow0$),
        \begin{align*}
            \abs{\frac12+\omega_{\{1,2\}}}^2 &\le 
                \left(\frac12 - \frac{\varphi-z_2}{z_1-z_2}\right)^2 + \left(\frac{\psi}{z_1-z_2}\right)^2\\
                &= \left(\frac{(1+\varphi^2+\psi^2)/(2\varphi)-\varphi}{z_1-z_2}\right)^2 + \left(\frac\psi{z_1-z_2}\right)^2\\
                &= \frac{(1-\varphi^2+\psi^2)^2 + (2\varphi\psi)^2}{\left(2\sqrt{(1+\varphi^2+\psi^2)^2-4\varphi^2}\right)^2} = \frac14    \mper
        \end{align*}
        This proves the claim.
    \end{enumerate}
\end{proof}

\begin{lemma}[Approximation of $1/z$ on a semi-dis\ukus{c}{k}] \label{lmm:apdx-proof:pd-track:polappr}
    Denote by $\mathcal D_{0.5,+}$ the right half of a dis\ukus{c}{k} in the complex plane, cent\ukus{r}{er}ed at $0.5$ and with radius $0.5$. Define $R=2$. Then, for any $0<\rho<R$, there exists some constant $\kappa_\rho$ such that, for any integer $\iidx\ge0$, there exists a degree-$\iidx$ polynomial that approximates $f(z)=1/z$ on $\mathcal D_{0.5,+}$ with an infinity-norm error of at most $\kappa_\rho\rho^{-\iidx}$.
\end{lemma}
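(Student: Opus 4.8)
The plan is to derive the lemma from the Bernstein--Walsh theorem of classical polynomial-approximation theory. Write $E := \mathcal D_{0.5,+}$; it is compact with connected complement, so polynomial approximation on $E$ is unobstructed, and the quantitative tool is this: if $g_E$ denotes the Green's function of $\overline{\mathbb C}\setminus E$ with pole at infinity and $E_\rho := \{z : g_E(z) < \log\rho\}$ its sublevel set, then the largest $\rho$ for which $f$ continues analytically to $E_\rho$ governs the best degree-$\iidx$ approximation error through $\limsup_\iidx \varepsilon_\iidx(f)^{1/\iidx} = 1/\rho$, with $\varepsilon_\iidx(f)$ the best uniform error on $E$. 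Applied to $f(z) = 1/z$, whose only singularity is at $z = 0$, this reduces everything to the single claim $g_E(0) = \log 2$: granting it, $f$ is analytic on a neighbourhood of $\overline{E_\rho}$ for every $\rho < 2$, so $\limsup_\iidx\varepsilon_\iidx(f)^{1/\iidx} = 1/2$, and absorbing the finitely many smallest indices into the constant upgrades this to $\varepsilon_\iidx(f)\le\kappa_\rho\rho^{-\iidx}$ for all $\iidx\ge0$ and all $0<\rho<R=2$.

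The heart of the matter is thus the value $g_E(0)$, which I would compute via an explicit chain of conformal maps, using that the Green's function is conformally invariant. The affine change $w = 2z - 1$ sends $E$ to the closed right half $E_w = \{|w|\le1,\ \Re w\ge0\}$ of the unit disk and $z = 0$ to $w = -1$, so $g_E(0) = g_{E_w}(-1)$. The M\"obius map $\nu(w) = (w-\iu)/(w+\iu)$ sends $\pm\iu$ to $0,\infty$, hence carries the two boundary arcs of $E_w$ (the right half of the unit circle and the vertical diameter, both joining $\pm\iu$) to two rays from the origin; a test point such as $w = \tfrac12$ identifies $\nu(E_w)$ as a closed quadrant, with $\nu(\infty)$ and $\nu(-1)$ explicit. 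Its complement is a sector of opening $\tfrac{3\pi}{2}$, which a suitable branch of $\zeta\mapsto\zeta^{2/3}$ straightens to a half-plane; a rotation turns that into the upper half-plane $\mathbb H$, and a final M\"obius map $\mathbb H\to\mathbb D$ sending the image of $\infty$ to $0$ completes a conformal bijection $\overline{\mathbb C}\setminus E_w\to\mathbb D$ respecting the pole-at-infinity normalization. Reading $g_{E_w}(-1)$ off as $-\log$ of the modulus of this composite at $w=-1$, the arithmetic collapses (a difference of two cube roots of unity over another such difference) to $|{-1}|\,/\,|{-1}+\sqrt3\,\iu| = \tfrac12$, so $g_{E_w}(-1) = \log 2$ and hence $g_E(0) = \log 2$, exactly matching $R = 2$.

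The main obstacle I anticipate is the bookkeeping in this conformal chain rather than any deep difficulty: one must pin down the correct branch of the fractional power on the $\tfrac{3\pi}{2}$-sector, track which side of each boundary curve the region sits on (so as to pick valid interior test points), and verify that ``pole at infinity $\leftrightarrow$ centre of the disk'' is respected, so that indeed $g_{E_w}(w) = -\log|\Psi(w)|$ for the composite map $\Psi$. It is worth stressing that the value $g_E(0) = \log 2$ is sharp, so there is no shortcut via a simpler sub- or super-set of $E$ with a known Green's function -- for instance the smallest disk containing $E$ yields only $g_E(0)\ge\log(13/11)$, far short of $\log 2$ -- which is why the explicit map is genuinely needed. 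Finally, I would state precisely the version of Bernstein--Walsh being invoked and recall at the outset that $\mathcal D_{0.5,+}$ has connected complement.
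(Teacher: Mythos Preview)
Your proposal is correct and follows essentially the same route as the paper: invoke a Bernstein--Walsh/Saff-type theorem to reduce the question to computing the conformal radius of the pole $z=0$ relative to $\mathcal D_{0.5,+}$, then build the exterior Riemann map as a chain of elementary pieces (affine, M\"obius, the $\zeta\mapsto\zeta^{2/3}$ power map opening the three-quarter sector to a half-plane, and a final M\"obius normalisation), arriving at $R=2$. The only differences are cosmetic---your intermediate M\"obius steps differ from the paper's $w_1,\dots,w_5$, and you phrase the target quantity as the Green's function value $g_E(0)=\log 2$ rather than $|w(0)|=2$, but these are equivalent formulations of the same computation.
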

\begin{proof}
    $f$ is analytic in the complex plane, except for the origin $z=0$. According to \cite[Theorem 4.1]{saffLogarithmicPotentialTheory2010a}, we must find the unique Riemann (conformal) mapping $z\rightarrow w(z)$ of the exterior of $\mathcal D_{0.5,+}$ to the exterior of the unit dis\ukus{c}{k} $\mathcal D$ for which $w(\infty) = \infty$ and $w'(\infty)>0$. The lemma then holds for any $R$ for which $f$ can be analytically extended to the interior of the $w$-preimage $\Gamma_R$ of the radius-$R$ origin-cent\ukus{r}{er}ed circle.

    \clearpage
    In essence, we must find a conformal mapping $w$ from $\mathbb C\backslash\mathcal D_{0.5,+}$ to $\mathbb C\backslash\mathcal D$ for which $w(\infty)=\infty$ and $w'(\infty)>0$, checking how far $w(0)$ is from the origin. We construct
    \begin{equation}
        w(z) = w_5(w_4(w_3(w_2(w_1(z)))))    \mper
    \end{equation}

    \begin{figure}
        \centering
        \begin{minipage}{.25\textwidth}
            \begin{tikzpicture}[scale=.4]
                \begin{axis}[
                    axis equal image,
                    xmin=-2,
                    xmax=2,
                    ymin=-2,
                    ymax=2,
                    ticklabel style={font=\Large}
                ]

                \fill[gray!50] (-2,-2) rectangle (2,2);

                \begin{scope}
                    \clip (.5,-.6) rectangle (1.1,.6);
                    \draw (.5,0) circle(.5);
                    \fill[white] (.5,0) circle(.5);
                    \draw (.5,-.5) -- (.5,.5);
                \end{scope}
                \fill[red] (0,0) circle(.06);
                \end{axis}
                \node[above] at (2.9,5.6) {\color{white}$w_1(z)$};
                \node[above] at (2.9,5.6) {$z$};
            \end{tikzpicture}
        \end{minipage}
        \hfill
        \begin{minipage}{.25\textwidth}
            \begin{tikzpicture}[scale=.4]
                \begin{axis}[
                    axis equal image,
                    xmin=-2,
                    xmax=2,
                    ymin=-2,
                    ymax=2,
                    ticklabel style={font=\Large}
                ]

                \fill[gray!50] (-2,-2) rectangle (2,2);

                \begin{scope}
                    \clip (0,-.1) rectangle (2,2.1);
                    \draw (0,1) circle(1);
                    \fill[white] (0,1) circle(1);
                    \draw (0,0) -- (0,2);
                \end{scope}
                \fill[red] (-1,1) circle(.06);

                \end{axis}
                \node[above] at (2.9,5.6) {$w_1(z)$};
            \end{tikzpicture}
        \end{minipage}
        \hfill
        \begin{minipage}{.25\textwidth}
            \begin{tikzpicture}[scale=.4]
                \begin{axis}[
                    axis equal image,
                    xmin=-2,
                    xmax=2,
                    ymin=-2,
                    ymax=2,
                    ticklabel style={font=\Large}
                ]

                \fill[gray!50] (-2,-2) rectangle (2,2);
                \fill[white] (0,-2) rectangle (2,0);
                \draw (0,-2) -- (0,0);
                \draw (0,0) -- (2,0);

                \fill[red] (-.5,0) circle(.06);

                \end{axis}
                \node[above] at (2.9,5.6) {$w_2(w_1)$};
            \end{tikzpicture}
        \end{minipage}
        \vfill
        \begin{minipage}{.25\textwidth}
            \begin{tikzpicture}[scale=.4]
                \begin{axis}[
                    axis equal image,
                    xmin=-2,
                    xmax=2,
                    ymin=-2,
                    ymax=2,
                    ticklabel style={font=\Large}
                ]

                \fill[gray!50] (-2,-2) rectangle (2,2);
                \fill[white] (-2,-2) rectangle (2,0);
                \draw (-2,0) -- (2,0);

                \fill[red] (-.315,.5456) circle(.06);

                \end{axis}
                \node[above] at (2.9,5.6) {$w_3(w_2)$};
            \end{tikzpicture}
        \end{minipage}
        \hfill
        \begin{minipage}{.25\textwidth}
            \begin{tikzpicture}[scale=.4]
                \begin{axis}[
                    axis equal image,
                    xmin=-2,
                    xmax=2,
                    ymin=-2,
                    ymax=2,
                    ticklabel style={font=\Large}
                ]

                \fill[gray!50] (-2,-2) rectangle (2,2);

                \begin{scope}
                    \draw (0,0) circle(1);
                    \fill[white] (0,0) circle(1);
                \end{scope}
                \fill[red] (1,-1.7321) circle(.06);

                \end{axis}
                \node[above] at (2.9,5.6) {$w_4(w_3)$};
            \end{tikzpicture}
        \end{minipage}
        \hfill
        \begin{minipage}{.25\textwidth}
            \begin{tikzpicture}[scale=.4]
                \begin{axis}[
                    axis equal image,
                    xmin=-2,
                    xmax=2,
                    ymin=-2,
                    ymax=2,
                    ticklabel style={font=\Large}
                ]

                \fill[gray!50] (-2,-2) rectangle (2,2);

                \begin{scope}
                    \draw (0,0) circle(1);
                    \fill[white] (0,0) circle(1);
                \end{scope}
                \fill[red] (-2,0) circle(.1);

                \end{axis}
                \node[above] at (2.9,5.6) {$w_5(w_4)$};
            \end{tikzpicture}
        \end{minipage}

        \caption{
            Conformal maps forming $w$. The red dots follow $f$'s pole from $\mathcal D_{0.5,+}$ to $\mathcal D$. The horizontal and vertical axes denote the real and imaginary parts, respectively.
            \vspace{-.5cm}
        }
        \label{fig:apdx-proof:pd-track:conform}
    \end{figure}

    First, $w_1$ takes $\mathcal D_{0.5,+}$, moves it with its bottom corner to the origin and magnifies it by a factor of two. The mapping that accomplishes this is $w_1 = 2z-1+\iu$. Then $w_2$ maps the exterior of the semi-dis\ukus{c}{k} into three quadrants. This can be done by the mapping $w_2 = 1/w_1+\iu/2$. Next, $w_3$ collapses three quadrants into a half-plane with the mapping $w_3 = w_2^{2/3}$. We can then turn a half-plane into the exterior of the unit dis\ukus{c}{k} through a M\"obius transformation of the form $w_4 = \frac{w_3-\beta^*}{w_3-\beta}$ for some $\beta$. Recall that $w$ should map $\infty$ to $\infty$; this can be done by taking $\beta$ to be the image of $\infty$ up until now. If $z=\infty$, we obtain $w_1=\infty$, $w_2=\iu/2$, and $w_3=(\iu/2)^{2/3}$. So setting $\beta=(\iu/2)^{2/3}$, $w_4$ is now determined. Finally, we find $w_4'(z=\infty) = (-3\sqrt3+9\iu)/4$, so with $w_5 = \exp(-2\pi\iu/3)w_4$ we end up with $w'(\infty) = (3\sqrt3)/2>0$.
    
    \Cref{fig:apdx-proof:pd-track:conform} illustrates the mapping $w$. The pole at $z=0$ maps to $w(0)=-2$, which is at distance $R=2$ from the origin. This concludes the proof.
\end{proof}

\section*{Acknowledgments}
    We are grateful to Ignace Bossuyt, Giovanni Conni, Toon Ingelaere, and Vince Maes for their thorough reviews and helpful comments. We also thank the anonymous referees for providing valuable feedback and suggestions, which greatly improved the quality of the paper.

\bibliographystyle{siamplain}
\bibliography{references}

\begin{thebibliography}{10}

\bibitem{benedusiExperimentalComparisonSpacetime2021}
{\sc P.~Benedusi, M.~L. Minion, and R.~Krause}, {\em An experimental comparison
  of a space-time multigrid method with {{PFASST}} for a reaction-diffusion
  problem}, Comput.\ Math.\ Appl., 99 (2021), pp.~162--170.

\bibitem{bernsteinMatrixMathematicsTheory2005}
{\sc D.~Bernstein}, {\em Matrix {{Mathematics}}: {{Theory}}, {{Facts}}, and
  {{Formulas}} with {{Application}} to {{Linear Systems Theory}}},  (2005).

\bibitem{biniNumericalMethodsStructured2005a}
{\sc D.~A. Bini, G.~Latouche, and B.~Meini}, {\em Numerical {{Methods}} for
  {{Structured Markov Chains}}}, Numer. Math. Sci. Comput., {Oxford University
  Press}, 2005.

\bibitem{caceressilvaParallelintimePararealImplementation2014}
{\sc J.~J. Caceres~Silva, B.~Bar{\'a}n, and C.~Schaerer}, {\em Parallel-in-time
  {{Parareal}} implementation using {{PETSc}}}, in 2014 {{XL Latin American
  Computing Conference}} ({{CLEI}}), 2014, pp.~1--12.

\bibitem{caklovicParallelintimeCollocationMethod2023b}
{\sc G.~Caklovic, R.~Speck, and M.~Frank}, {\em A parallel-in-time collocation
  method using diagonalization: theory and implementation for linear problems},
  2023.
\newblock arXiv:2103.12571.

\bibitem{crameriScientificColourMaps2021}
{\sc F.~Crameri}, {\em Scientific colour maps}.
\newblock Zenodo, 2021.

\bibitem{davisCirculantMatrices1979}
{\sc P.~J. Davis}, {\em Circulant {{Matrices}}}, {Wiley}, 1979.

\bibitem{emmettEfficientParallelTime2012}
{\sc M.~Emmett and M.~Minion}, {\em Toward an efficient parallel in time method
  for partial differential equations}, Commun. Appl. Math. Comput. Sci., 7
  (2012), pp.~105--132.

\bibitem{ganderTimeParallelizationNonlinear2017a}
{\sc M.~J. Gander and L.~Halpern}, {\em Time {{Parallelization}} for
  {{Nonlinear Problems Based}} on {{Diagonalization}}}, in Domain
  {{Decomposition Methods}} in {{Science}} and {{Engineering XXIII}}, C.-O.
  Lee, X.-C. Cai, D.~E. Keyes, H.~H. Kim, A.~Klawonn, E.-J. Park, and O.~B.
  Widlund, eds., vol.~116, {Springer International Publishing}, 2017,
  pp.~163--170.

\bibitem{ganderDirectTimeParallel2019a}
{\sc M.~J. Gander, L.~Halpern, J.~Rannou, and J.~Ryan}, {\em A {{Direct Time
  Parallel Solver}} by {{Diagonalization}} for the {{Wave Equation}}}, SIAM J.
  Sci. Comput., 41 (2019), pp.~A220--A245.

\bibitem{ganderPARAOPTPararealAlgorithm2020a}
{\sc M.~J. Gander, F.~Kwok, and J.~Salomon}, {\em {{PARAOPT}}: {{A Parareal
  Algorithm}} for {{Optimality Systems}}}, SIAM J. Sci. Comput., 42 (2020),
  pp.~A2773--A2802.

\bibitem{ganderParaDiagParallelintimeAlgorithms2021a}
{\sc M.~J. Gander, J.~Liu, S.-L. Wu, X.~Yue, and T.~Zhou}, {\em {{ParaDiag}}:
  Parallel-in-time algorithms based on the diagonalization technique},
  arXiv:2005.09158 [cs, math],  (2021).

\bibitem{ganderConvergenceAnalysisPeriodiclike2019a}
{\sc M.~J. Gander and S.-L. Wu}, {\em Convergence analysis of a periodic-like
  waveform relaxation method for initial-value problems via the diagonalization
  technique}, Numer. Math., 143 (2019), pp.~489--527.

\bibitem{goddardNoteParallelPreconditioning2019}
{\sc A.~Goddard and A.~Wathen}, {\em A note on parallel preconditioning for
  all-at-once evolutionary {{PDEs}}}, Electron. Trans. Numer. Anal., 51 (2019),
  pp.~135--150.

\bibitem{gotschelEfficientParallelinTimeMethod2019a}
{\sc S.~G{\"o}tschel and M.~L. Minion}, {\em An {{Efficient Parallel-in-Time
  Method}} for {{Optimization}} with {{Parabolic PDEs}}}, SIAM J. Sci. Comput.,
  41 (2019), pp.~C603--C626.

\bibitem{greenbaumAnyNonincreasingConvergence1996}
{\sc A.~Greenbaum, V.~Pt{\'a}k, and Z.~Strako{\v s}}, {\em Any {{Nonincreasing
  Convergence Curve}} is {{Possible}} for {{GMRES}}}, SIAM J. Matrix Anal.
  Appl., 17 (1996), pp.~465--469.

\bibitem{guermondStabilizationGalerkinApproximations1999}
{\sc J.-L. Guermond}, {\em Stabilization of {{Galerkin}} approximations of
  transport equations by subgrid modeling}, ESAIM: M2AN, 33 (1999),
  pp.~1293--1316.

\bibitem{heVankatypeMultigridSolver2022a}
{\sc Y.~He and J.~Liu}, {\em A {{Vanka-type}} multigrid solver for
  complex-shifted {{Laplacian}} systems from diagonalization-based
  parallel-in-time algorithms}, Appl. Math. Lett., 132 (2022), p.~108125.

\bibitem{hinzeOptimizationPDEConstraints2009b}
{\sc M.~Hinze, R.~Pinnau, M.~Ulbrich, and S.~Ulbrich}, eds., {\em Optimization
  with {{PDE}} Constraints}, no.~23 in Mathematical Modelling: Theory and
  Applications, {Springer}, 2009.

\bibitem{kressnerImprovedParallelintimeIntegration2022}
{\sc D.~Kressner, S.~Massei, and J.~Zhu}, {\em Improved parallel-in-time
  integration via low-rank updates and interpolation}, arXiv:2204.03073 [cs,
  math],  (2022).

\bibitem{Lin_2022}
{\sc X.-L. Lin}, {\em A parallel-in-time preconditioner for the {S}chur
  complement of parabolic optimal control problems},  (2022).
\newblock arXiv:2109.12524 [cs, math].

\bibitem{lionsResolutionEDPPar2001a}
{\sc J.-L. Lions, Y.~Maday, and G.~Turinici}, {\em {R\'esolution d'EDP par un
  sch\'ema en temps \guillemotleft parar\'eel\guillemotright}}, C.R. Acad. Sci.
  Paris S\'er. I Math., 332 (2001), pp.~661--668.

\bibitem{liuROMacceleratedParallelintimePreconditioner2020}
{\sc J.~Liu and Z.~Wang}, {\em A {{ROM-accelerated}} Parallel-in-Time
  Preconditioner for Solving All-at-Once Systems from Evolutionary {{PDEs}}},
  2020.

\bibitem{liuFastBlockAcirculant2020a}
{\sc J.~Liu and S.-L. Wu}, {\em A fast block {$\alpha$}-circulant
  preconditioner for all-at-once system from wave equations}, SIAM J. Matrix
  Anal. Appl.,  (2020).

\bibitem{mcdonaldPreconditioningIterativeSolution2018a}
{\sc E.~McDonald, J.~Pestana, and A.~Wathen}, {\em Preconditioning and
  iterative solution of all-at-once systems for evolutionary partial
  differential equations}, SIAM J. Sci. Comput.

\bibitem{pearsonRegularizationRobustPreconditionersTimeDependent2012}
{\sc J.~W. Pearson, M.~Stoll, and A.~J. Wathen}, {\em Regularization-{{Robust
  Preconditioners}} for {{Time-Dependent PDE-Constrained Optimization
  Problems}}}, SIAM J. Matrix Anal. Appl., 33 (2012), pp.~1126--1152.

\bibitem{saadGMRESGeneralizedMinimal1986a}
{\sc Y.~Saad and M.~H. Schultz}, {\em {{GMRES}}: A generalized minimal residual
  algorithm for solving nonsymmetric linear systems}, SIAM J. Sci. Statist.
  Comput., 7 (1986), pp.~856--869.

\bibitem{saffLogarithmicPotentialTheory2010a}
{\sc E.~Saff}, {\em Logarithmic {{Potential Theory}} with {{Applications}} to
  {{Approximation Theory}}}, Surv. Approx. Theory, 5 (2010).

\bibitem{searleInvertingCirculantMatrices1979}
{\sc S.~R. Searle}, {\em On inverting circulant matrices}, Linear Algebra
  Appl., 25 (1979), pp.~77--89.

\bibitem{skeneParallelintimeApproachAccelerating2021a}
{\sc C.~S. Skene, M.~F. Eggl, and P.~J. Schmid}, {\em A parallel-in-time
  approach for accelerating direct-adjoint studies}, J. Comput. Phys., 429
  (2021).

\bibitem{trefethenNumericalLinearAlgebra1997a}
{\sc L.~N. Trefethen and D.~Bau~III}, {\em Numerical Linear Algebra}, vol.~50,
  {Siam}, 1997.

\bibitem{vandervorstBiCGSTABFastSmoothly1992}
{\sc H.~A. {van der Vorst}}, {\em Bi-{{CGSTAB}}: {{A Fast}} and {{Smoothly
  Converging Variant}} of {{Bi-CG}} for the {{Solution}} of {{Nonsymmetric
  Linear Systems}}}, SIAM J. Sci. Statist. Comput., 13 (1992).

\bibitem{wuParallelCoarseGrid2018a}
{\sc S.-L. Wu}, {\em Toward {{Parallel Coarse Grid Correction}} for the
  {{Parareal Algorithm}}}, SIAM J. Sci. Comput., 40 (2018), pp.~A1446--A1472.

\bibitem{wuParallelInTimeBlockCirculantPreconditioner2020a}
{\sc S.-L. Wu and J.~Liu}, {\em A {{Parallel-In-Time Block-Circulant
  Preconditioner}} for {{Optimal Control}} of {{Wave Equations}}}, SIAM J. Sci.
  Comput.,  (2020).

\bibitem{wuDiagonalizationbasedParallelintimeAlgorithms2020b}
{\sc S.-L. Wu and T.~Zhou}, {\em Diagonalization-based {{Parallel-in-time}}
  algorithms for parabolic {{PDE-constrained}} optimization problems}, ESAIM
  Control Optim. Calc. Var.,  (2020).

\end{thebibliography}
    
\end{document}